\documentclass[11pt,a4paper,english,reqno]{amsart}
\usepackage{amsmath,amssymb,amsfonts,epsfig,mathrsfs,hyperref}
\usepackage[T1]{fontenc}

\usepackage{color}
\usepackage{array}
\usepackage{amsthm}
\usepackage{amstext}
\usepackage{graphicx}
\usepackage{setspace}
\usepackage{esint}
\usepackage[margin=2.5cm]{geometry}
\usepackage{bbm}
\usepackage{color}
\usepackage{enumitem}
\allowdisplaybreaks[4]

\usepackage{amscd,psfrag}
\usepackage{yhmath}
\usepackage[mathscr]{eucal}

\usepackage{slashed}

\makeatletter
\pdfpageheight\paperheight
\pdfpagewidth\paperwidth

\usepackage{comment}

\usepackage{epstopdf}
\usepackage{chngcntr}
\usepackage{mathrsfs}

\usepackage{indentfirst}	

\usepackage[normalem]{ulem}
\theoremstyle{plain}

\newtheorem{definition}{Definition}[section]
\newtheorem{theorem}[definition]{Theorem}
\newtheorem*{theorem*}{Theorem}

\newtheorem{remark}[definition]{Remark}

\newtheorem*{remark*}{Remark}
\newtheorem*{sideremark*}{Side Remark}\newtheorem*{mt*}{Main Theorem}

\newtheorem*{claim*}{Claim}
\newtheorem*{q*}{Question}
\newtheorem{lemma}[definition]{Lemma}

\newtheorem*{corollary*}{Corollary}
\newtheorem*{proposition*}{Proposition}

\newtheorem{proposition}[definition]{Proposition}

\newcommand{\R}{\mathbb{R}}

\newcommand{\na}{\nabla}
\newcommand{\dd}{{\rm d}}
\newcommand{\p}{\partial}
\newcommand{\e}{\varepsilon}
\newcommand{\emb}{\hookrightarrow}

\newcommand{\G}{\Gamma}
\newcommand{\M}{\mathcal{M}}
\newcommand{\dvg}{\dd V_g}

\newcommand{\two}{{\rm II}}

\newcommand{\loc}{{\rm loc}}

\newcommand{\stwo}{{\mathbf{S}^2}}
\newcommand{\gbar}{{\overline{g}}}

\newcommand{\E}{\mathscr{E}}
\newcommand{\V}{{\mathbf{V}}}
\newcommand{\U}{{\mathbf{U}}}

\newcommand{\J}{{\mathscr{J}}}

\newcommand{\ball}{{\bf B}}
\newcommand{\met}{{\mathfrak{m}}}
\newcommand{\net}{{\mathfrak{n}}}

\newcommand{\XX}{{\bf X}}
\newcommand{\bigo}{\mathcal{O}}
\newcommand{\littleo}{\mathfrak{o}}

\newcommand{\Z}{{\mathcal{Z}_g}} 

\newcommand{\jone}{{J^{\rm I}_\e}}
\newcommand{\jtwo}{{J^{\rm II}_\e}}
\newcommand{\GG}{{\mathcal{G}}}

\allowdisplaybreaks[4]

\def\XXint#1#2#3{{
\setbox0=\hbox{$#1{#2#3}{\int}$}
\vcenter{\hbox{$#2#3$}}
\kern-.6\wd0}}

\newcommand{\mres}{%
\mathbin{%
\vrule height 1.6ex depth 0pt width 0.13ex%
\vrule height 0.13ex depth 0pt width 1.3ex}}

\newcommand{\dvge}{{\dd V_{g_\e}}}
\newcommand{\mc}{{\M_{\rm comp}}}

\newcommand{\B}{{\mathbf{B}}}

\title{On the degenerate Weyl problem on isometric immersions}

\author{Siran Li}

\address{Siran Li: School of Mathematical Sciences $\&$ CMA-Shanghai, Shanghai Jiao Tong University, No.~6 Science Buildings,
800 Dongchuan Road, Minhang District, Shanghai, China (200240)}

\email{\texttt{siran.li@sjtu.edu.cn}}

\author{Xiangxiang Su}
\address{Xiangxiang Su: School of Mathematical Sciences, Shanghai Jiao Tong University, No.~6 Science Buildings,
800 Dongchuan Road, Minhang District, Shanghai, China (200240)}
\email{\texttt{sjtusxx@sjtu.edu.cn}}

\keywords{Isometric immersion; isometric embedding; Weyl problem; Gauss--Codazzi equations; degenerate elliptic equations; regularity}

\subjclass[2020]{35J60, 53C21, 53C42} 
\date{\today}

\begin{document}

\maketitle

\begin{abstract}
This paper is concerned with the Weyl problem, \emph{i.e.}, the existence of isometric immersions or embeddings of two-spheres into the three-dimensional Euclidean space or general ambient three-manifolds. We establish two results on the degenerate Weyl problem, namely when the Gaussian curvature $K_g$ is only nonnegative rather than strictly positive. First, for a smooth metric $g$ on the two-sphere, if $K_g$ is strictly positive except at finitely many points where the Hessian of $K_g$ is positive definite, then $g$ admits a global $C^{2,1}$-isometric embedding into $\R^3$. It appears to be the first result on the degenerate Weyl problem with purely intrinsic conditions on $g$. Second, for a general simply-connected ambient three-manifold $(\M,\gbar)$, if $K_g \geq K_0 \geq {\rm sec}_\gbar$ for some constant $K_0$, $(K_g-K_0)^{-1/2} \in L^p$ with $p \geq 2$, and a certain uniform pinching condition holds for approximate nondegenerate isometric immersions, then there exists a $W^{3,p}$-isometric immersion. Alongside we also resolve the nondegenerate Weyl problem (\emph{i.e.}, when $K_g>0$) into general simply-connected ambient three-manifolds for $g$, $\gbar \in C^{2,1}$.
\end{abstract}

\section{Introduction}

\subsection{The Weyl problem}
Isometric immersions or embeddings of surfaces in three-dimensional ambient manifolds have long been a central problem in classical differential geometry and global analysis. Investigations on the existence, uniqueness, stability, and regularity of isometric immersions have played a significant role in the development of geometric analysis and nonlinear PDEs (partial differential equations), and have also provided the theoretical foundations for various problems in physics and applied mathematics, most notably in mathematical relativity and nonlinear elasticity. See Han--Hong~\cite{hh} for a comprehensive exposition.

An outstanding problem in this field, now known as the ``\emph{Weyl problem}'', asks about the existence of isometric immersions of a surface $(\Sigma,g)$ with positive Gaussian curvature $K_g$ diffeomorphic to $\stwo$, the two-sphere into $\R^3$, the Euclidean three-space. This classical problem was first investigated in \cite{w} by H. Weyl (1916), in which \textit{a priori} bounds for the mean curvature in terms of four derivatives of $g$ and the positive lower bound for $K_g$ have been obtained. The Weyl problem serves as the mathematical foundation for the definition of quasi-local masses in cosmology, \textit{e.g.}, the ADM~\cite{ADM59, ADM62}, Brown--York~\cite{by, st}, Scheon--Yau~\cite{syau}, Liu--Yau~\cite{ly1, ly2}, and Wang--Yau~\cite{wy} masses.

In 1938, Lewy~\cite{lewy} solved the Weyl problem for the real-analytic  metric $g$ with strictly positive Gaussian curvature. Nirenberg (1953) extended Lewy's theorem to $g\in C^4$ by establishing the regularity theory for Monge--Amp\`{e}re equations~\cite{N53}. Heinz (1962) further settled in~\cite{h} for $g\in C^3$ (\textit{cf}. also Schulz~\cite{Schulz}; F.-H. Lin~\cite{FHLin}). Aleksandrov and Pogorelov \cite{a,p} obtained generalised solutions to the Weyl problem by passing to limits of convex polyhedra.

Starting from the 1990s, the \emph{degenerate Weyl problem}, \textit{i.e.}, the case that $g$ has nonnegative Gaussian curvature $K_g \geq 0$, has also been studied in the literature. Guan--Li~\cite{GL} and Hong--Zuily~\cite{hz} established the existence of $C^{1,1}$-isometric immersions for $g \in C^4$. The regularity issue of the degenerate Weyl problem has been studied, under hypotheses imposed on the local degeneracy behaviour of the Gaussian curvature, in the framework of degenerate Monge--Amp\`{e}re equations. Notable developments in this direction can be found in P. Daskalopoulos--Savin \cite{DS}, Guan \cite{Guan97}, Guan--Sawyer \cite{GS}, and Jiang \cite{jiang}, among other references. Meanwhile, it has been known that there are  obstructions to the existence of isometric immersions with $C^3$- or higher regularity for the degenerate Weyl problem. Indeed, Burago--Shefel'~\cite{bs} and Iaia~\cite{iaia} constructed  examples for real-analytic metrics on $\stwo$ with positive Gaussian curvature everywhere except at one point but admit \emph{no} global $C^3$-isometric immersions into $\R^3$. 

On the other hand, the existence of isometric immersions of $\stwo$ into a three-dimensional, simply-connected ambient manifold $(\M,\gbar)$ other than $\R^3$ has also been widely studied in the literature; we shall refer to this as the ``\emph{generalised Weyl problem}''. More precisely, it asks about the existence of isometric immersions of  $(\stwo,g)$ with intrinsic curvature $\geq K_0$ into a $3$-dimensional simply-connected Riemannian manifold $(\M,\gbar)$ with sectional curvature $\leq K_0$; here $K_0$ is a finite real number. For $\M=\mathbb{H}^3$, the hyperbolic 3-space, Pogorelov~\cite{p2} proved the existence of isometric immersions for the Gaussian curvature $K_g >-1$, and by Lin--Wang~\cite{lw} for $K_g\geq -1$; see also Chang--Xiao \cite{cx}. We refer the reader to Pogorelov \cite{p}, Guan--Lu \cite{glu}, Lu~\cite{LuWeyl}, and Li--Wang \cite{li-wang} for isometric immersions in more general ambient manifolds.

Labourie (1989) recast the generalised Weyl problem in a fundamentally new framework based on pseudoholomorphic curves~\cite{Labourie}. The existence and regularity issues for the nondegenerate case are consequences of the Schwarz' lemma \textit{\`{a} la} Gromov. Adopting this framework and using elliptic PDE theory, the first named author~\cite{LiWeyl} studied the nondegenerate generalised Weyl problem for $g,\gbar\in C^3$ and obtained tentative results for the degenerate case.

\subsection{Main results on degenerate Weyl problem}
In this paper, we study the regularity issues in the \emph{degenerate} Weyl problem, for isometric immersions of $\stwo$ into both $\R^3$ and general simply-connected ambient 3-manifolds $(\M,\gbar)$. One primary motivation is the following question in Guan--Li~\cite{GL}: 
\begin{quote}
   \emph{Under what conditions on a smooth metric $g$ on $\stwo$ with nonnegative Gaussian curvature is there a global $C^{2,\alpha}$ isometric embedding into $\R^3$ for some $\alpha$, or even a global $C^{2,1}$ isometric embedding?}
\end{quote}
In particular, we seek simple, geometrically natural sufficient conditions for isometric immersions to have third-order derivatives in some weak sense.

We depart from two rather simple ideas:
\begin{enumerate}
    \item 
In view of the counterexamples in Burago--Shefel'~\cite{bs} and Iaia~\cite{iaia}, we look for conditions on $g$ ensuring that $(\stwo,g)$ admits $C^{2,1}$-isometric immersions in $\R^3$. Several sufficient conditions have been obtained in~\cite{DS, GS, jiang}, but they all require certain uniform controls for the \emph{extrinsic geometries} of isometric immersions of metrics with strictly positive Gaussian curvature that approximate $g$. Therefore, we are interested in finding conditions that only relate to the \emph{intrinsic geometry} of $g$ on the set of degeneracy $\{K_g=0\}\cap\stwo$.

\item 
When the Gaussian curvature tends to zero too rapidly as one approaches the degeneracy set $\{K_g=0\}\cap\stwo$, it is natural to expect the failure of higher regularity. We shall prove that if $K_g^{-1}$ is $L^r$-integrable for some $r \geq 1$, and if degeneracy occurs in an isotropic manner (the two principal curvatures are comparable with each other along some approximating sequence of metrics), then $W^{3,{2r}}$-isometric immersions exist.

\end{enumerate}

In (1) above, the counterexamples in~\cite{bs, iaia} suggest that the regularity of isometric immersions in the degenerate Weyl problem cannot be restored simply by increasing the regularity of $g$. We therefore assume $g \in C^\infty$ in (1). On the other hand, if $g \in C^4$ and $K_g \geq 0$, then $\na K_g = 0$ on the degeneracy set $\{K_g=0\}\cap\stwo$, which forces $K_g^{-1} \notin L^r$ for any $r \geq 1$. Hence, we consider only non-$C^4$-metrics in (2).

The first main result of the paper answers in the affirmative the above question by Guan--Li~\cite{GL}, under purely intrinsic assumptions that $K_g$ degenerates at only isolated points, and the Morse index of each degenerate point is zero. 
\begin{theorem}\label{thm: C2,1 isom emb}
Let $\Sigma$ be a closed smooth surface diffeomorphic to $\stwo$ with a $C^\infty$-metric $g$ whose Gaussian curvature $K_g\geq 0$. Assume that the degeneracy set $\{K_g=0\}$ is finite, on which the Hessian of $K_g$ is positive definite. Then $g$ admits a global $C^{2,1}$-isometric embedding into $\R^3$. 
\end{theorem}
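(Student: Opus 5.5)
The plan is to approximate $g$ by metrics of strictly positive curvature, solve the nondegenerate Weyl problem for each, and pass to the limit with a uniform $C^{2,1}$ bound coming from the purely intrinsic structure of $K_g$ near its zeros. The convexity of the limit surface will give that it is an embedding.

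\textbf{Approximation.} Fix $\e>0$. Following Guan--Li~\cite{GL}, set
\[
g_\e=(1+\e)^{-1}\bigl(g+\e\, g_{\stwo}\bigr),
\]
or use a conformal perturbation $g_\e=e^{2u_\e}g$ with $u_\e$ small and smooth. This is arranged so that $K_{g_\e}>0$ and $g_\e\to g$ in $C^\infty$. In particular, near each degenerate point $p_i$ we will have $K_{g_\e}\ge c(|x-p_i|^2+\e)$ in normal coordinates centred at $p_i$, with $c>0$ independent of $\e$. This is possible because $\nabla^2K_g(p_i)>0$ gives $K_g\ge c|x-p_i|^2$ nearby. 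By Nirenberg's theorem~\cite{N53}, each $g_\e$ has a smooth, strictly convex isometric embedding $X_\e:\stwo\to\R^3$. Guan--Li and Hong--Zuily give uniform $C^{1,1}$ bounds, namely $|\two_\e|\le C$ and hence $|H_\e|\le C$. Away from the points $p_i$ we have $K_g\ge\delta>0$, so the nondegenerate Heinz/Nirenberg interior estimates give uniform $C^{k}$ bounds there for every $k$. What remains is a uniform third-derivative bound near each $p_i$. By Arzelà--Ascoli this yields a $C^{2,1}$ limit $X$ that is isometric for $g$, and the limit is convex. It is an embedding because it is the boundary of a convex body with $K_g>0$ off a finite set; by Cohn-Vossen/Aleksandrov rigidity it cannot collapse or flatten, since the total curvature equals $4\pi$.

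\textbf{Local third-order estimate (main obstacle).} Near $p_i$, write the surface as a graph $z=u_\e(x)$ over its tangent plane. Then $u_\e$ solves the Monge--Amp\`ere equation $\det D^2u_\e=K_{g_\e}(1+|Du_\e|^2)^2$. Equivalently, by Darboux, it solves an equation of the form $\det(\nabla^2\rho+\rho\,g)=K\det g$ for the support-type function. The right-hand side $f_\e$ is smooth and satisfies $f_\e\sim |x|^2+\e$, with $\nabla^2 f$ uniformly positive. The key point is that $\sqrt{f}$ is then uniformly Lipschitz, because $|\nabla f|^2\le Cf$, with constants independent of $\e$. This is exactly the Guan~\cite{Guan97}/Guan--Sawyer~\cite{GS} type hypothesis. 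My plan is to differentiate the equation once in a direction $e$ and apply a maximum principle, in the linearized operator $L=u^{ij}\partial_{ij}$, to a test quantity of the form
\[
\Phi=\frac{|\nabla^3 u_e|^2_{u^{-1}}}{f}+A|Du|^2+B\,\Delta u ,
\]
in the spirit of Calabi's third-derivative estimate. The quantity $f\,u^{ij}=\mathrm{cof}(D^2u)$ is uniformly bounded, which converts degenerate terms into bounded ones. The positive definite Hessian of $f$ supplies the good term $u^{ij}f_{ij}\ge c\,\mathrm{tr}(u^{-1})$, and this absorbs the bad terms generated by $\nabla f$, using $|\nabla f|^2\le Cf$.

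The expected outcome is a bound on the Calabi quantity $u^{ia}u^{jb}u^{kc}u_{ijk}u_{abc}\le C/f$. Combined with $u^{ij}\ge c$ (from $|D^2u|\le C$), this gives $|D^3u|^2\le C\,\mathrm{cof}$-weighted terms, and hence $|D^3u|$ bounded. If a pure bound on $|D^3 u|$ fails, I would aim instead for $|D^3u|\le C$ in the region $|x|^2\gtrsim\e$ by rescaling $x\mapsto x/\sqrt{\e+|x|^2}$. Under this rescaling the equation becomes uniformly elliptic with controlled data, so interior estimates apply on dyadic annuli, and the scaling gives $|D^3u|\le C$. This rescaling argument is the step I expect to require the most care: I must check that the nondegenerate interior estimates are scale-invariant at the right homogeneity. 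This works because $D^2u$ is of order $1$, and $u$ is uniformly convex in an averaged sense, since the two principal curvatures multiply to $f\sim r^2$ and both remain bounded. The main risk is anisotropic degeneration, where one principal curvature stays of order $1$ while the other is of order $r^2$. The positivity of $\nabla^2K$ rules out worse behaviour, but the estimate will have to be carried out uniformly in $\e$ via the maximum principle computation above.
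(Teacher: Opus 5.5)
\textbf{There is a genuine gap in the local third-order estimate near the $p_i$. Neither of your two routes closes it.}

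\emph{The Calabi route.} The bound you expect from the maximum principle, $u^{ia}u^{jb}u^{kc}u_{ijk}u_{abc}\le C/f$, is false for the basic local model $u=|x|^3$.
\begin{itemize}
\item This model solves $\det D^2u=18|x|^2$, so it meets all your hypotheses: $\nabla^2f>0$ and $|\nabla f|^2\le Cf$.
\item There $D^2u\approx|x|\,I$ and $|D^3u|\approx 1$, so the Calabi quantity is $\approx|x|^{-3}$, while $C/f\approx|x|^{-2}$.
\item Hence no $\e$-uniform bound of that form can hold along approximations converging to such a profile.
\item Even granting the bound, $u^{ij}\ge c$ only gives $|D^3u|^2\le C/f\approx C|x|^{-2}$. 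That gives no Lipschitz control of $D^2u$ at $p_i$, so the step ``hence $|D^3u|$ bounded'' does not follow.
\end{itemize}

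\emph{The rescaling route.} Whatever homogeneity you use, $u_\rho(y)=\rho^{-m}u(\rho y)$ has $D^2u_\rho(y)=\rho^{2-m}D^2u(\rho y)$. So the linearised operator is uniformly elliptic on a fixed annulus only if the two eigenvalues of $D^2u$ are comparable on $\{|x|\approx\rho\}$. Since $\det D^2u\approx\rho^2$, this means $D^2u\approx\rho\,I$ there.
\begin{itemize}
\item This contradicts your own justification that ``$D^2u$ is of order $1$''.
\item The anisotropic regime you flag, with eigenvalues $\approx 1$ and $\approx\rho^2$, is \emph{not} excluded by $\operatorname{Hess}K_g>0$.
\item It is exactly the nonradial branch $v=\frac{a_0}{12}z_1^4+\frac{1}{2a_0}z_2^2+\dots$ in Daskalopoulos--Savin's classification of $\det D^2v=|z|^2\tilde a(z)$, with $\tilde a\in C^{0,\alpha}$ and $\tilde a>0$.
\end{itemize}

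\textbf{The missing idea is this dichotomy.} The paper works directly with the convex $C^{1,1}$ embedding of Guan--Li, so no $\e$-uniformity is needed. At each $P_j$ it splits into two cases.
\begin{itemize}
\item \emph{Balanced branch}, $c|z|^3\le v\le C|z|^3$. Caffarelli's interior estimates for the cubic blow-ups $\det D^2v_r=Q_0(\zeta)a(r\zeta)$ give $cI\le D^2v_r\le CI$ on annuli; this is precisely the isotropy your rescaling needs. Evans--Krylov for the rescaled Darboux equation and Schauder for $\partial_ku_r$ then give $|D^3u|\le C$ and $|D^2u|\le C|x|$.
\item \emph{Nonradial branch.} Here $D^2v(0)$ has rank one, so $H\ge c_0>0$ near $P_j$. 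Guan--Sawyer's theorem (finite type $K_g\approx d_g^2$ plus $H$ bounded below) then gives smoothness.
\end{itemize}
Your Calabi-type computation is essentially the Guan--Sawyer mechanism. That mechanism needs $H\ge c>0$, so it cannot treat the balanced branch, where $H(P_j)=0$. The cubic blow-up, in turn, cannot treat the nonradial branch. You need both arguments, separated by the classification.
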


The proof of Theorem~\ref{thm: C2,1 isom emb} exploits the results on  degenerate Monge--Amp\`{e}re equations in Guan--Sawyer~\cite{GS} and Daskalopoulos--Savin~\cite{DS}, as well as Caffarelli's theory~\cite{Caffarelli90, Caffarelli91} on viscosity solutions to nondegenerate Monge--Amp\`{e}re equations applied to a cubic blow-up of the degenerate solution in the radial case, whose behaviour is in turn characterised in~\cite{DS}.

In our second main result, we consider a $W^{3,\infty}$-metric $g$ on $\stwo$, and show that integrability conditions on the inverse of the Gaussian curvature near the degeneracy set ensure third-order differentiability of the isometric immersion into general ambient 3-manifolds. 

\begin{theorem}\label{thm:conditional-W3p}
Let $(\Sigma,g)$ be a Riemannian
surface diffeomorphic to $\stwo$ with $g\in W^{3,\infty}$, and let $(\M,\gbar)$ be a complete and simply-connected 3-dimensional Riemannian manifold with $\gbar \in W^{3,\infty}_\loc$. Suppose that the Gaussian curvature of $(\Sigma,g)$ satisfies $K_g \geq K_0$, and the sectional curvature of $(\M,\gbar)$ satisfies $\sec_{\overline g}\leq K_0$, where $K_0 \in \R$ is a constant. Assume that $\{g_\varepsilon\}_{0<\varepsilon<\varepsilon_0}$ is a family of smooth metrics with strictly positive Gaussian curvature and uniformly bounded $W^{3,\infty}$-norms, such that $g_\e \to g$ in $C^{2,\alpha}(\Sigma)$ for every $0<\alpha<1$, and that $\{g_\e\}$ admits $C^3$-isometric immersions $\{f_\varepsilon\}$ with images contained in a compact set $\mc \subset \M$ independent of $\varepsilon$. 

Then, under the assumptions:
\begin{align}\label{eq:main-integral-assumptions}
&\frac{1}{K_g-K_0}\in L^{\frac{p}{2}}(\Sigma,g)\qquad\text{for some $p \geq 2$},\\
& \label{eq:uniform-global-pinching}
\frac{k_\varepsilon}{H_\varepsilon}\geq\tau\qquad\text{for some $\tau >0$ independent of $\e$ a.e. on $\Sigma$},
\end{align} 
there exists a subsequence of $\{f_\e\}$ that converges weakly in the $W^{3,p}$-topology to an isometric immersion $f:(\Sigma,g)\to (\M,\gbar)$.
\end{theorem}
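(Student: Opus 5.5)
We would prove the statement by obtaining $\e$-uniform $W^{3,p}$ bounds on $f_\e$. Weak compactness then gives a subsequence converging weakly in $W^{3,p}$ and strongly in $C^{1,\beta}$ (indeed in $C^2$ when $p>2$). The isometry relation $f_\e^*\gbar=g_\e$ passes to the limit because $g_\e\to g$ in $C^{2,\alpha}$ and the images stay in the compact set $\mc$.

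\emph{Zeroth and first order.} Since $f_\e$ is an isometric immersion, $|\na f_\e|_{g_\e,\gbar}$ is bounded, and $\mc$ bounds $f_\e$ itself. \emph{Second order.} We need a uniform bound on the second fundamental form $h_\e$, i.e.\ on the mean curvature $H_\e$. In the positive case we would use a maximum principle for $H_\e$ in the spirit of Weyl/Heinz/Pogorelov, as adapted to general ambient manifolds in~\cite{LiWeyl}. The bound should depend only on $\|g_\e\|_{W^{3,\infty}}$, $\|\gbar\|_{W^{3,\infty}(\mc)}$, and a lower bound on $K_{g_\e}-\sec_\gbar\ge 0$. The degenerate situation should be handled as in Guan--Li~\cite{GL}, whose $C^{1,1}$ estimates require only $K\ge K_0\ge \sec_\gbar$ and three derivatives of the data. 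This gives $\sup_\Sigma|h_\e|\le C$ independent of $\e$.

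\emph{Third order, the main step.} Here we use the Codazzi equations. Let $k_\e$ denote the smaller principal curvature, so $k_\e\ge \tau H_\e$ by \eqref{eq:uniform-global-pinching}. The Gauss equation gives
\[
\det h_\e = K_{g_\e}-\overline{\rm sec}_\e ,
\]
which is bounded below by roughly $K_{g_\e}-K_0$. Since the two principal curvatures are comparable, each is pinched: $k_\e^2\gtrsim \tau^2\det h_\e$ and $H_\e^2\lesssim \tau^{-2}\det h_\e$. Hence $h_\e$ is a uniformly elliptic tensor once it is normalised by $\sqrt{K_{g_\e}-K_0}$.

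Now differentiate the Gauss equation:
\[
{\rm cof}(h_\e)^{ij}\na_k h_{ij}=\na_k\bigl(K_{g_\e}-\overline{\rm sec}_\e\bigr),
\]
whose right-hand side is bounded in $L^\infty$ by the $W^{3,\infty}$ bounds on $g_\e$ and $\gbar$. Combine this with the Codazzi equations $\na_k h_{ij}-\na_j h_{ik}=\bar R(\cdot)$, whose right-hand side is bounded. Together they form a first-order algebraic system for $\na h_\e$ with coefficient matrix ${\rm cof}(h_\e)$. This is the same structure as in Heinz--Lewy-type elliptic systems and in Labourie's pseudoholomorphic picture. Solving pointwise, using pinching to invert with a constant proportional to $1/k_\e$, we expect
\[
|\na h_\e|\le \frac{C}{k_\e}\le \frac{C}{\tau\sqrt{K_{g_\e}-K_0}} .
\]
The delicate point is whether this pointwise algebra closes; the system is $3\times3$ in the derivatives of $h$ for each direction, so it may not. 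If it does not close pointwise, we would instead run an integral estimate: multiply by $\na h_\e$, integrate, and use the ellipticity weight $k_\e$, aiming for $\int k_\e^2|\na h_\e|^2\le C$ and bootstrapping to $L^p$.

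\emph{Passing to the limit.} The pointwise bound gives
\[
\|\na h_\e\|_{L^p}^p\le C\int_\Sigma (K_{g_\e}-K_0)^{-p/2}\,\dd V_{g_\e}.
\]
To make this uniform in $\e$ we need $K_{g_\e}-K_0\gtrsim K_g-K_0$, or at least uniform integrability. If the approximating family is not of this form, we would pass to a modification: use Fatou on compact subsets of $\{K_g>K_0\}$, where $C^{2,\alpha}$-convergence does not give pointwise convergence of $K$. Since $g_\e\to g$ only in $C^{2,\alpha}$, the curvatures converge only in $C^{0,\alpha}$, hence uniformly. One can then pick a subsequence or modify with $K_{g_\e}\ge K_g-o(1)$; we expect the paper's family may be chosen so that $K_{g_\e}\ge K_g$, for instance $K_{g_\e}-K_0\ge K_g-K_0+\e$. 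This uniformity is the part we expect to be the main obstacle, together with the pointwise inversion of the Codazzi--Gauss system. With the uniform bound, $f_\e$ is bounded in $W^{3,p}$ and the conclusion follows as in the first paragraph.
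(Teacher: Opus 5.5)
Your overall architecture matches the paper's: an $L^\infty$ bound on $\two_\e$, an averaged bound $|\nabla\two_\e|\lesssim k_\e^{-1}$, integration against $k_\e^{-p}$, transfer to $k^{-p}$ through the approximating family, then Gauss--Weingarten bootstrapping and weak compactness. The scaling $|\nabla\two_\e|\lesssim k_\e^{-1}$ is also the right one. The gap is that the step producing it, which is the heart of the theorem, does not work as you propose.

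\emph{Why pointwise inversion fails.} At each point the unknowns $\nabla_k h_{ij}$ are six numbers, while the two Codazzi equations and the two differentiated Gauss equations give only four linear relations. The map you want to invert therefore has a kernel of dimension at least two. In isothermal coordinates, with $\Phi_\e=\frac{L_\e-N_\e}{2}-iM_\e$, you may prescribe $\partial_z\Phi_\e$ arbitrarily, and the four equations then determine $\partial_{\bar z}\Phi_\e$ and $\partial_zH_\e$. Gauss--Codazzi is a first-order elliptic (Beltrami-type) system. As with Cauchy--Riemann, it gives no pointwise bound on derivatives, only estimates on balls, and these involve $r^{-1}\|h_\e\|$ and constants depending on the regularity of the coefficients.

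\emph{Why the fallback is insufficient.} A global bound $\int k_\e^2|\nabla h_\e|^2\le C$ followed by an unspecified bootstrap does not explain how to reach $\int|\nabla h_\e|^p\lesssim\int k_\e^{-p}$. Nor does it handle large $p$. The coefficient $a=\overline{\Phi_\e}/(2H_\e e^{2u_\e})$ is only uniformly $L^\infty$; its H\"older norm is not controlled where $H_\e$ is small. $W^{1,p}$-theory for Beltrami systems with merely $L^\infty$ coefficients holds only for $p$ near $2$.

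\emph{The missing idea} is the scale-adapted localisation of Proposition~\ref{prop:GC-arbitrary-p}.
\begin{itemize}
\item Pinching makes $k_\e^2$ uniformly Lipschitz. Hence $H_\e\approx k_\e\approx k_\e(x_0)$ on $\B_{8r_0}(x_0)$ with $r_0=r_*\min\{1,k_\e(x_0)^2\}$. This is exactly the scale at which $r_0^{-1}|h_\e|\sim k_\e/r_0$ matches the source $|\mathcal F_\e|\lesssim 1+k_\e^{-1}$.
\item Rescale the ball to unit size and divide $H_\e,\Phi_\e$ by $k_\e(x_0)$. This yields the system of Lemma~\ref{lem: 1st order PDE for Phi} with $|a|+|b|\le\sqrt{1-\tau^2}$, bounded source, and $\widetilde H_\e\ge c_\tau$.
\item Meyers' estimate then gives $W^{1,q_0}$, hence H\"older coefficients, hence $W^{1,p}$.
\item Scaling back gives $\int_{\B_{2r_0}}|\nabla H_\e|^p\lesssim k_\e(x_0)^{4-p}\approx\int_{\B_{8r_0}}k_\e^{-p}$ when $k_\e(x_0)\le 1$.
\item Cover $\Sigma$ by a Vitali family with radii $r_*\min\{1,k_\e^2\}$. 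This radius function is $\frac1{32}$-Lipschitz, so the overlap is uniformly bounded. Summing gives $\int_\Sigma|\nabla H_\e|^p\le C\int_\Sigma k_\e^{-p}+C$.
\item Codazzi ellipticity for $\mathring{\two}_\e$ upgrades this to $\nabla\two_\e$.
\end{itemize}

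Three smaller points.
\begin{itemize}
\item Your second-order step appeals to a Heinz-type maximum principle whose constant depends on a positive lower bound for $K_{g_\e}-\sec_\gbar$, which degenerates. It also appeals to Guan--Li, which concerns $\R^3$ and $C^4$ metrics. Neither applies here, but neither is needed. Pinching directly gives $|\two_\e|\le 2H_\e\le 2\tau^{-1}k_\e\le C$ and $|\nabla(k_\e^2)|\le C$ (Lemma~\ref{lem:uniform-bounds}); the latter is what drives the localisation above.
\item The correct normalising factor is $k_\e$, not $\sqrt{K_{g_\e}-K_0}$, since only $k_\e^2\ge K_{g_\e}-K_0$ holds.
\item You are right that uniformity in $\e$ needs $K_{g_\e}-K_0\gtrsim K_g-K_0$. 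The uniform convergence $K_{g_\e}\to K_g$ that $C^{2,\alpha}$-convergence provides gives no control of $(K_{g_\e}-K_0)^{-1}$ near $\{K_g=K_0\}$. The paper obtains it from the family of Proposition~\ref{prop:curvature-compatible-approximation}, for which $K_{g_\e}-K_0\ge c_0(K_g-K_0+\e)$, via Lemma~\ref{lem:inverse-curvature-transfer}.
\end{itemize}
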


Note that the limiting isometric immersion $f$ in the above theorem is also of regularity $W^{3,p}$. Moreover, by compactness of embedding $W^{3,p}(\Sigma;\M) \emb \emb C^{1,\beta}(\Sigma;\M)$, the convergence $f_\e \to f$ also holds strongly in $C^{1,\beta}$ for any $\beta \in [0,1[$, as long as $p \geq 2$.

Here and throughout the paper, we introduce the following notation:
\begin{itemize}
\item 
The superscript ${}^\#$ and the  subscript ${}_\#$ denote pullback and pushforward, respectively.
    \item
    Let $f_\e: (\Sigma,g_\e) \to (\M,\gbar)$ be an isometric immersion. Then $\kappa_{1,\e}$ and $\kappa_{2,\e}$ are the principal curvatures associated with $f_\e$, namely the smallest and largest eigenvalues of the shape operator $S_\e$ or the second fundamental form $\two_\e$ of $f_\e$. 
    \item 
    $H_\e$ is the mean curvature of $f_\e$; that is,
    \begin{equation}\label{mean curvature, def}
        H_\e := \frac{\kappa_{1,\e} + \kappa_{2,\e}}{2}.
    \end{equation}
    \item 
    Let $K_{g_\e}$ be the Gaussian curvature of $g_\e$, let ${\rm sec}_{\gbar}$ be the sectional curvature of $(\M,\gbar)$, and  let $\mres$ denote the restriction. Then we set
    \begin{equation}\label{small k-eps, def}
        k_\e := \sqrt{K_{g_\e} - {\rm sec}_{\gbar}\mres \left[(f_\e)_\#(T\Sigma) \right]} \equiv \sqrt{\kappa_{1,\e}\kappa_{2,\e}}.
    \end{equation} 
    We say that $k_\e$ is the square root of the ``\emph{relative Gaussian curvature}'' associated with the isometric immersion $f_\e$. At times we simply refer to $k_\e^2$ as the Gaussian curvature.

    \item 
    By a slight abuse of notation, we set
    \begin{equation}\label{small k, def}
    k := \sqrt{K_g-K_0}. 
    \end{equation}
    
\end{itemize}

Let us comment on the two conditions in Theorem~\ref{thm:conditional-W3p}.

\begin{itemize}
    \item[(i)] In view of the above notation, the condition~\eqref{eq:main-integral-assumptions} in Theorem~\ref{thm:conditional-W3p} can be expressed as
\begin{align} \label{k inverse in Lp}
    k^{-1} \in L^p(\Sigma,g)\qquad\text{for some  $p\geq 2$}.
\end{align}
This is stated for $p \geq 2$, but no degeneracy can occur whenever $p \geq 4$. Indeed, suppose that the degeneracy set $\Z:= \{x\in\Sigma:K_g(x)=K_0\}$ is nonempty. Then for $g \in W^{3,\infty}$ the Gaussian curvature is Lipschitz, so $K_g(x)-K_0 \lesssim d_g(x,x_0)$ for $x_0$ near $\Z$. But then
\begin{align*}
    \int_{\ball_r(x_0)} k^{-p}\,\dvg \gtrsim \int_{\ball_r(x_0)} d_g(x,x_0)^{-\frac{p}{2}}\,\dvg(x) \gtrsim \int_0^r s^{1-\frac{p}{2}}\,\dd s = \infty\qquad\text{for } p \geq 4.
\end{align*}

By a similar argument, if $g\in W^{4,\infty}$ (in particular, for the degenerate Weyl problem into $\R^3$ with $C^4$-metrics; \textit{cf.} Guan--Li~\cite{GL} and Hong--Zuily~\cite{hz}), the condition~\eqref{eq:main-integral-assumptions} or \eqref{k inverse in Lp} never holds for any $p \geq 2$. This is because $\na [K_g(x_0)-K_0]=0$ for any $x_0 \in \Z$, so for $x$ close to $x_0$ one has $k^2(x) =K_g(x_0)-K_0 \lesssim d_g(x,x_0)^2$ by Taylor's expansion. Then  
\begin{align*}
    \int_{\ball_r(x_0)} k^{-p}\,\dvg \gtrsim \int_{\ball_r(x_0)} d_g(x,x_0)^{-p}\,\dvg(x) \gtrsim \int_0^r s^{1-p}\,\dd s = \infty\qquad\text{for } p \geq 2.
\end{align*}

\item[(ii)]
The condition~\eqref{eq:uniform-global-pinching} is not purely intrinsic as in Theorem~\ref{thm: C2,1 isom emb}. The definition of the principal curvatures and mean curvature relies on the existence of the isometric immersions in the case of strictly positive (relative) Gaussian curvature. Observe that
\begin{align*}
    \frac{k_\e}{H_\e} = \frac{\sqrt{\kappa_{1,\e}\kappa_{2,\e}}}{\frac{\kappa_{1,\e}+\kappa_{2,\e}}{2}} = \frac{2}{\sqrt{\frac{\kappa_{1,\e}}{\kappa_{2,\e}}+2+\frac{\kappa_{2,\e}}{\kappa_{1,\e}}}},
\end{align*}
so that
\begin{align*}
  \frac{2}{\sqrt{{3+\frac{\kappa_{2,\e}}{\kappa_{1,\e}}}}}\leq  \frac{k_\e}{H_\e} \leq   \frac{2}{\sqrt{{2+\frac{\kappa_{2,\e}}{\kappa_{1,\e}}}}}.
\end{align*}
Hence, the condition~\eqref{eq:main-integral-assumptions} is equivalent to 
\begin{equation}
    \kappa_{2,\e} \leq c\kappa_{1,\e}\qquad\text{for some $c>0$ independent of $\e$}.
\end{equation}
Since $g_\e$ are uniformly bounded in $W^{3,\infty}$, $\gbar \in W^{3,\infty}_\loc$, and isometric immersions $f_\e$ have images confined in a compact subset, $\kappa_{1,\e}$ and $\kappa_{2,\e}$ are uniformly bounded in $L^\infty$. Thus, by \eqref{eq:main-integral-assumptions}, the principal curvatures tend to zero at the same order on the degeneracy set $\Z$, ruling out anisotropic degenerate scenarios for the shape operator.

\end{itemize}

\subsection{Nondegnerate Weyl problem}
In passing, we also solve the generalised Weyl problem in $(\M,\gbar)$ for the nondegenerate case, \emph{i.e.}, when the relative Gaussian curvature is strictly positive, for metrics of $C^{2,1}$-regularity. 

To put this into perspective, Lewy~\cite{lewy} solved the nondegenerate Weyl problem for $g \in C^\omega$ and $\M=\R^3$, Nirenberg~\cite{N53} for $g \in C^4$ and $\M=\R^3$, and Heinz~\cite{h} (\textit{cf}. also Schulz~\cite{Schulz} and Lin~\cite{FHLin}) for $g \in C^3$ and $\M=\R^3$. Lu~\cite{LuWeyl} (see also Guan--Lu~\cite{glu}) established the existence of $C^{2,\mu}$-isometric embeddings for $g \in C^3$ and the ambient manifold $\M$ with scalar curvature bounded from below and finitely many boundary components that are minimal surfaces; the primary example being the Schwarzschild spacetime outside the horizon. Moreover, Lu~\cite{LuWeyl} obtained $C^2$-isometric embeddings for $g \in C^{2,{\rm Dini}}$ into space form $\M$ of constant curvature $\kappa$, provided that the scalar curvature of $g$ is larger than $2\kappa$. Also, for a general ambient manifold $\M$, Heinz~\cite{h} obtained a mean curvature
estimate for $g\in C^3$ and the existence of a global convex function on $\M$; see also Dubrovin~\cite{dub}. Using the pseudo-holomorphic curve formulation \textit{\`{a} la} Labourie~\cite{Labourie}, the first named author settled in~\cite{LiWeyl} the generalised Weyl problem for $g, \gbar \in C^3$.

In this paper, we establish the following result on nondegenerate generalised Weyl problem, along the way of proving Theorem~\ref{thm:conditional-W3p}.
 
\begin{theorem} \label{thm:strict-C21-Weyl}
Let $(\Sigma,g)$ be a diffeomorphic $\mathbb S^2$ with $g \in C^{2,1}$, and let $(\M,\gbar)$ be a complete, simply-connected 3-dimensional Riemannian manifold. Assume that for some constants $K_0 \in\R$ and $\delta_0>0$, one has $K_g\geq K_0+\delta_0$ and $\sec_{\overline g}\leq K_0$. Then there exists an isometric immersion $f: (\Sigma,g) \to (\M,\gbar)$ of regularity $W^{3,p}$ for any $p<\infty$.
\end{theorem}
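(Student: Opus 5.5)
We argue by approximation and compactness, using the smooth theory of \cite{LiWeyl} for $g,\gbar\in C^3$ and an \emph{a priori} $W^{3,p}$ estimate that depends only on the $C^{2,1}$-norm of $g$, the lower bound $\delta_0$, and local bounds on $\gbar$.

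\emph{Step 1: approximation.} Mollify $g$ to get smooth metrics $g_\e\to g$ in $C^{2,\alpha}$ for every $\alpha<1$. The $g_\e$ are uniformly bounded in $C^{2,1}$, equivalently in $W^{3,\infty}$. Since $K_{g_\e}\to K_g$ uniformly, we have $K_{g_\e}\geq K_0+\delta_0/2$ for small $\e$. If $\gbar$ is only assumed smooth (or $C^3$) this is not needed; otherwise we mollify $\gbar$ as well, perturbing so that $\sec_{\gbar_\e}\le K_0+\delta_0/4$, and then replace $K_0$ by $K_0+\delta_0/4$. By \cite{LiWeyl} (Labourie's framework), each $g_\e$ admits a $C^3$ isometric immersion $f_\e$.

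\emph{Step 2: uniform $C^2$ bounds.} We show that $f_\e(\Sigma)$ lies in a fixed compact set $\mc$. The intrinsic diameter is uniformly bounded, so it suffices to anchor one base point. We then need a uniform bound on $|\two_\e|$, i.e.\ on the mean curvature $H_\e$. This is Heinz's maximum-principle estimate, or equivalently Labourie's Schwarz-lemma argument. It uses only $C^2$ control of $g_\e$ (through $\na^2 K_{g_\e}$, which is bounded in $L^\infty$ for $C^{2,1}$ metrics) together with the positive lower bound $\kappa_{1,\e}\kappa_{2,\e}\ge K_{g_\e}-\sec\ge\delta_0/2$. Combined with $H_\e\le C$, this gives uniform two-sided pinching $c\le\kappa_{1,\e}\le\kappa_{2,\e}\le C$.

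\emph{Step 3: $W^{3,p}$ estimate.} This is the main obstacle. With the pinching in hand, the Gauss equation $\det(\two_\e)=k_\e^2$ is a uniformly elliptic Monge--Amp\`ere-type equation. Equivalently, the Codazzi equations form a uniformly elliptic first-order system for $\two_\e$ whose coefficients involve only first derivatives of the Christoffel symbols of $g_\e$ and curvature of $\gbar$, hence are bounded in $L^\infty$ when $g\in C^{2,1}$. Differentiating the Gauss equation once and using Codazzi yields a linear uniformly elliptic equation in divergence form for $\na\two_\e$. Its coefficients are continuous because Step 2 together with Evans--Krylov (or Caffarelli's $C^{2,\alpha}$ theory for Monge--Amp\`ere equations with H\"older data) gives uniform $C^{1,\alpha}$ bounds on $\two_\e$. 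The right-hand side is bounded by $\|\na K_{g_\e}\|_{L^\infty}$ plus lower-order terms. Calder\'on--Zygmund $L^p$ theory then gives $\|\na\two_\e\|_{L^p}\le C_p$ for every $p<\infty$, that is, uniform $W^{3,p}$ bounds on $f_\e$. The delicate point is that $g$ has only $C^{2,1}$ regularity, so we must not differentiate twice. We will therefore work in local coordinates where $f_\e$ is a graph, and use the Labourie reformulation of \cite{LiWeyl} to avoid $C^3$ norms of $g$.

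\emph{Step 4: passage to the limit.} Extract a subsequence with $f_\e\rightharpoonup f$ weakly in $W^{3,p}$ and strongly in $C^{2,\beta}$. Then $f^\#\gbar=\lim f_\e^\#\gbar_\e=\lim g_\e=g$, and the pinching survives in the limit, so $f$ is an immersion. Since $p$ is arbitrary, by a diagonal argument $f\in W^{3,p}$ for all $p<\infty$.
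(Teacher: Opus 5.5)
\textbf{There is a genuine gap, in Step~2.} The uniform bound on $\two_\e$ is the heart of the $C^{2,1}$ problem, and your justification for it does not work.

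The maximum-principle (Weyl--Nirenberg type) mean curvature estimate comes from the second-order equation satisfied by $H_\e$. The inhomogeneous term of that equation contains $\na^2 K_{g_\e}$, i.e.\ four derivatives of the metric. For $g\in C^{2,1}$ the curvature $K_g$ is only Lipschitz, so $\na^2 K_{g_\e}$ is not uniformly bounded; for mollifications it is only $\bigo(\e^{-1})$. Your parenthetical claim that $\na^2K_{g_\e}$ is bounded in $L^\infty$ for $C^{2,1}$ metrics is therefore false. The phrase ``only $C^2$ control'' is also inconsistent with using $\na^2 K$ at all.

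Appealing to Labourie's Schwarz lemma does not rescue the step. The almost complex structure $\J_\e$ is built from $k_\e$, hence from $K_{g_\e}$ and from $\sec_{\gbar}$ on the tangent plane. It is therefore only uniformly Lipschitz. The Gromov--Schwarz and compactness arguments used by Labourie are carried out for smooth structures, with constants that depend on more than Lipschitz bounds of $\J$. Uniformity in the $C^{2,1}$-norm is exactly what would need proof. Everything downstream rests on this bound: the graph representation over discs of uniform size, the Caffarelli/Evans--Krylov step, the Calder\'on--Zygmund step, and the compactness. So the argument does not go through as written.

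The missing ingredient, as in the paper, is to start from an \emph{integral} bound. The Minkowski formula (Labourie, Proposition~5.4(iii)) gives $\sup_\e\int_\Sigma H_\e\,\dd V_{g_\e}\le M_0$. Lu's Heinz-type interior estimates then yield uniform $L^\infty$ and $C^{0,\alpha}$ bounds on $S_\e$. They use only three inputs: $\det\two_\e\ge c_0>0$, this $L^1$ bound, and $C^{2,1}$ bounds on the coordinate metrics. The pinching $c_1\le\kappa_{1,\e}\le\kappa_{2,\e}\le C_1$ then follows.

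With that in place, your Step~3 is a reasonable alternative to the paper's route. The paper instead splits into nearly umbilic and non-umbilic regions, and uses Labourie's first-order equations for $H_\e$ and $1/H_\e$ together with the trace-free Codazzi system. In your version, differentiating the Gauss equation once in graph coordinates involves only $\na K_{g_\e}$, which is uniformly bounded. $W^{2,p}$ estimates with H\"older coefficients then give $\na\two_\e\in L^p$.

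Three corrections are needed in Steps~1 and~3:
\begin{enumerate}
\item What Caffarelli/Evans--Krylov (or the two-dimensional theory) gives is $\two_\e\in C^{0,\alpha}$, not $C^{1,\alpha}$; the latter would force $K_g\in C^{1,\alpha}$. $C^{0,\alpha}$ is nonetheless enough for continuity of the coefficients.
\item The Codazzi equations involve the Christoffel symbols themselves, not their derivatives.
\item In a general $\M$ you cannot ``anchor a base point'' by an ambient isometry. You need the existence theorem with prescribed $f_\e(x_0)=y_0$ and $df_\e|_{x_0}$ (Labourie's Th\'eor\`eme~B), which is what the paper uses.
\end{enumerate}
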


\subsection{Organisation}

The remaining parts of the paper are organised as follows:
\begin{itemize}
    \item 
In \S\ref{sec: prelims} we collect background materials on isometric immersions, the Weyl problem, and its formulation in the framework of pseudoholomorphic curves. 
\item 
In \S\ref{sec: smooth g} we prove Theorem~\ref{thm: C2,1 isom emb} on the degenerate Weyl problem. More precisely, we establish the existence of global $C^{2,1}$-isometric embedding of nonnegatively curved $\Sigma \cong \stwo$ into $\R^3$ under purely intrinsic assumption that the Morse index of each degenerate point for $K_g$ (there are only finitely many of them) is zero. 

\item
Next, in \S\ref{sec: W3p} we prove Theorem~\ref{thm:conditional-W3p} on the degenerate generalised Weyl problem. We establish that if the relative Gaussian curvature degenerates slowly (\textit{i.e.}, with its inverse in suitable $L^r$-norms) near its degeneracy set, and if the uniform pinching condition $k_\e/H_\e \geq \tau>0$ holds for a family of elliptic-regularised isometric immersions, then a $W^{3,p}$-isometric immersion exists.

\item 
Finally, in \S~\ref{sec: strictly elliptic} we prove, via the method of  pseudoholomorphic curve for isometric immersions \textit{\`{a} la} Labourie~\cite{Labourie}, Theorem~\ref{thm:strict-C21-Weyl} on the strictly elliptic generalised Weyl problem, assuming both domain and target metrics with only $C^{2,1}$-regularity.

\end{itemize}

\section{Preliminaries}\label{sec: prelims}
In this section, we collect several simple geometric facts that are frequently used in the later parts of the paper. Throughout, $\Sigma$ denotes a diffeomorphic $\stwo$ with metric $g \in W^{3,\infty}$ and Gaussian curvature $K_g \in W^{1,\infty}$. The regularity of Riemannian metrics are understood componentwise in some $C^\infty$-atlas that is fixed once and for all.

\subsection{Strictly elliptic approximation to degenerate elliptic isometric immersions} 
We start with a few notations. Denote $k^2:=K_g-K_0$ and $\Z:=\Sigma \cap \{K_g=K_0\}$ as in the Introduction. By $A_\e \sim \bigo(\e^j)$ we mean $|A_\e| \leq C\e^j$ for $C$ independent of $\e$, and by $B_\e \sim \littleo(\e^j)$ we mean $|B_\e| /\e^j \to 0 $ as $\e \to 0^+$. Also, by an abuse of notation, we identify $W^{j,\infty}$ with $C^{j-1,1}$ on Euclidean spaces or Riemannian manifolds (and hence identify pointwise with \textit{a.e.} bounds), without explicitly stating the selection of continuous representatives. 

\begin{proposition}\label{prop:curvature-compatible-approximation}
Let $(\Sigma,g)$ be a diffeomorphic $\stwo$ with $g\in W^{3,\infty}$. Assume that $K_g \geq K_0$ for some constant $K_0 \in \R$. There exists a family of $C^\infty$-metrics $\{g_\varepsilon\}_{0<\varepsilon<\varepsilon_0}$ on $\Sigma$ uniformly bounded in $W^{3,\infty}$, such that for some $0<c_0<C_0$ independent of $\e$, it holds that
\begin{align*}
 c_0\bigl(K_g-K_0+\varepsilon\bigr)\leq K_{g_\varepsilon}-K_0\leq C_0\bigl(K_g-K_0+\varepsilon\bigr) \qquad\text{a.e. on }\Sigma,
\end{align*}
and
\begin{align*}
    g_\varepsilon\longrightarrow g \quad\text{in }C^{2,\alpha}(\Sigma)\qquad \text{for any } \alpha \in ]0,1[.
\end{align*}
\end{proposition}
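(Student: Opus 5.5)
We construct $g_\e$ in two stages: first mollify $g$, then conformally correct the mollified metric so that its curvature is bounded below by $K_0$ plus a term of order $\e$.

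\textbf{Step 1: mollification.} Fix a finite $C^\infty$ atlas of $\Sigma$ with a subordinate partition of unity. Mollify $g$ chartwise at scale $\delta=\delta(\e)$ and glue the pieces to obtain smooth metrics $h_\delta$. Standard mollifier estimates give the following:
\begin{itemize}
\item $\|h_\delta\|_{W^{3,\infty}}\le C\|g\|_{W^{3,\infty}}$;
\item $\|h_\delta-g\|_{C^{2,1-}}\to 0$, so $h_\delta\to g$ in $C^{2,\alpha}$ for every $\alpha<1$;
\item $\|h_\delta-g\|_{C^2}\le C\delta$, because $D^2g$ is Lipschitz.
\end{itemize}
The Gaussian curvature depends on at most two derivatives of the metric. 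Hence $|K_{h_\delta}-K_g|\le C_1\delta$ a.e. Choose $\delta=\e$. Then
\[
K_{h_\e}-K_0\ \ge\ K_g-K_0-C_1\e.
\]
This lower bound can be negative, so a correction is needed.

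\textbf{Step 2: conformal correction.} Set $g_\e := e^{2u_\e}h_\e$. By the conformal change formula,
\[
K_{g_\e}=e^{-2u_\e}\bigl(K_{h_\e}-\Delta_{h_\e}u_\e\bigr).
\]
We want $u_\e=\bigo(\e)$ in $W^{3,\infty}$ with $-\Delta_{h_\e}u_\e\approx A\e$ for a large constant $A$, together with $e^{-2u_\e}K_0\approx K_0$ up to $\bigo(\e)$.

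A constant Laplacian is impossible on a closed surface, since $\int\Delta u=0$. Instead we solve
\[
-\Delta_{h_\e}u_\e=A\e\,(1-\phi)
\]
for a smooth $\phi\ge0$ with $\int\phi\,dV_{h_\e}=|\Sigma|$. The support of $\phi$ should avoid the degeneracy region, where $K_g-K_0\ge 2\eta>0$. This requires $\Z\neq\Sigma$, which holds by Gauss--Bonnet unless $K_0=K_g\equiv$ const. In that trivial constant case, take $g_\e=(1-\e)g$ directly, rescaled appropriately.

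Near $\{K_g-K_0<\eta\}$ the right-hand side equals $A\e$. Elliptic regularity then gives $\|u_\e\|_{C^{3,\beta}}\le CA\e$, and consequently:
\begin{itemize}
\item on $\{K_g-K_0<\eta\}$: $K_{g_\e}-K_0 \ge (1-CA\e)\bigl(K_g-K_0-C_1\e+A\e\bigr)-C|K_0|A\e^2$;
\item on the complement: $K_{g_\e}-K_0\ge \eta-C(A+1)\e$.
\end{itemize}
Choose $A=2C_1+1$ and then $\e_0$ small. This yields $K_{g_\e}-K_0\ge c_0(K_g-K_0+\e)$ on both regions. The upper bound $K_{g_\e}-K_0\le C_0(K_g-K_0+\e)$ follows directly from $|K_{g_\e}-K_g|\le C\e$. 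The uniform $W^{3,\infty}$ bound and the $C^{2,\alpha}$ convergence are preserved, since $u_\e\to0$ in $C^{3,\beta}$.

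\textbf{Main obstacle.} The delicate point is Step 2: mollification alone only gives $K_{h_\e}\ge K_0-C\e$, and the linear-in-$\e$ lower bound must be restored by a correction whose Laplacian has a definite sign near $\Z$. The constraint $\int\Delta u=0$ forces the compensating negative part to be placed in the region where $K_g-K_0$ is bounded below. A second point to check is that $\delta=\e$ is compatible with the claimed bounds. Under the a.e.\ identification of $W^{3,\infty}$ with $C^{2,1}$, the estimate $|K_{h_\delta}-K_g|\le C\delta$ holds a.e. This is what makes both two-sided comparisons linear in $\e$.
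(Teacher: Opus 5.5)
There is a genuine gap in Step~2: you mis-estimate the $K_0$-term. Near $\mathcal{Z}_g$,
\[
K_{g_\varepsilon}-K_0=e^{-2u_\varepsilon}\bigl(K_{h_\varepsilon}-K_0+A\varepsilon\bigr)+K_0\bigl(e^{-2u_\varepsilon}-1\bigr).
\]
The last term is approximately $-2K_0u_\varepsilon$, which is of order $A\varepsilon$, not $A\varepsilon^2$. Indeed $u_\varepsilon=A\varepsilon w$ with $-\Delta w=1-\phi$, so the oscillation of $u_\varepsilon$ is genuinely of size $A\varepsilon$, whatever additive constant you choose. To first order, the net gain at a point of $\mathcal{Z}_g$ is therefore $A\varepsilon(1-2K_0w)-C_1\varepsilon$. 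Enlarging $A$ does not help where $1-2K_0w\le 0$, and this really happens. Take the model case of the round sphere of radius $R=K_0^{-1/2}$, with $\phi$ replaced by $|\Sigma|\delta_p$. The mean-zero solution is $w=2R^2\log\sin(\theta/2)+R^2$, where $\theta$ is the angle from $p$, so $1-2K_0w=-1$ at the antipode. By continuity the same holds for a sphere that is round except on a small cap around $p$ where $K_g>K_0$. There your correction lowers the curvature by roughly $A\varepsilon$ on $\mathcal{Z}_g$, and the lower bound fails.

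The repair uses the freedom in the additive constant. Normalise $u_\varepsilon$ so that $K_0u_\varepsilon\le 0$ on $\Sigma$: take $u_\varepsilon\le0$ if $K_0>0$, and $u_\varepsilon\ge0$ if $K_0<0$. This keeps $\|u_\varepsilon\|_{C^{3,\beta}}\le CA\varepsilon$. Then $K_0(e^{-2u_\varepsilon}-1)\ge0$, so near $\mathcal{Z}_g$ you get $K_{g_\varepsilon}-K_0\ge e^{-2u_\varepsilon}\bigl(K_g-K_0+(A-C_1)\varepsilon\bigr)$. The rest of your argument then goes through: the complementary region, the upper bound, and the $W^{3,\infty}$ and $C^{2,\alpha}$ claims.

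This sign condition is exactly the paper's requirement $-\Delta_g\lambda-2K_0\lambda>0$ on $\mathcal{Z}_g$ for the conformal factor $e^{2\varepsilon\lambda}$. For $K_0\neq0$ the paper simply takes $\lambda\equiv-\mathrm{sgn}(K_0)$, a homothety. It needs a Poisson equation only when $K_0=0$, which is precisely where your problematic term vanishes. The paper also reverses your order: it corrects conformally first and then mollifies at a scale $\ll\varepsilon$, so the mollification error is $o(\varepsilon)$ and need not be beaten by the correction. Your order (mollify at scale $\varepsilon$, then beat the $C_1\varepsilon$ loss) also works once the sign is fixed.
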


\begin{proof}[Proof of Proposition~\ref{prop:curvature-compatible-approximation}]
Consider metrics conformal to $g$:
\begin{align*}
g_\varepsilon^0:=e^{2\varepsilon\lambda}g
\end{align*}
with $\lambda\in C^\infty(\Sigma)$ to be determined. The Gaussian curvatures of $g^0_\e$ and $g$ are related by \begin{align*}
K_{g_\varepsilon^0} = e^{-2\varepsilon\lambda} \bigl(K_g-\varepsilon\Delta_g\lambda\bigr).
\end{align*}
Hence, as  $e^{-2\varepsilon\lambda} = 1-2\varepsilon\lambda+\bigo(\varepsilon^2)$ by Taylor's expansion, we have
\begin{align}
K_{g_\varepsilon^0}-K_0 = k^2 + \varepsilon \bigl(-\Delta_g\lambda -2K_0\lambda -2\lambda k^2\bigr) + \bigo(\varepsilon^2).
\label{eq:curvature-gap-expansion}
\end{align}
If we can choose $\lambda$ such that
\begin{align} \label{eq:lambda-choice}
-\Delta_g\lambda-2K_0\lambda>0 \qquad\text{on }\Z,
\end{align}
then $K_{g^0_\e}>K_0$ on $\Z$ for $\e>0$ sufficiently small. The theorem thus follows by taking $g_\e :=$ mollification of $g^0_\e$ at a scale $\ll \e$ and invoking the Arzel\`{a}--Ascoli theorem.

To justify the choice of $\lambda$ that satisfies~\eqref{eq:lambda-choice}, note that if $\Z$ is the empty set, then $\lambda \equiv 0$ would work. If $\Z \neq \emptyset$ and $K_0 \neq 0$, then simply choose $\lambda \equiv -{\rm sgn}(K_0)$. It remains to consider $\Z \neq \emptyset$ and $K_0 = 0$. In this case, $\Z \neq \Sigma$ by the Gauss--Bonnet theorem, and hence there exists $h \in C^\infty(\Sigma)$ such that $\int_\Sigma h\,\dvg=0$ and $h>0$ in some neighbourhood of $\Z$. Then, we may solve $\lambda \in C^\infty(\Sigma)$ from $-\Delta_g \lambda = h$ on $\Sigma$ to satisfy~\eqref{eq:lambda-choice}.     \end{proof}

From the PDE perspectives, $K_g > \sec_\gbar$, the Gauss--Codazzi system corresponding to the isometric immersion $f:(\Sigma,g)\emb(\M,\gbar)$ is strictly elliptic. Thus, we shall adopt the following definition from Labourie~\cite{Labourie}. Recall the (relative) Gaussian curvature of $f$:
\begin{equation}\label{relative Gaussian curvature of f, def}
    K := \big(\text{Gaussian curvature of $g$}\big) - \big(\text{Gaussian curvature of $f_\#(T\Sigma)$}\big).
\end{equation}
\begin{definition}
An isometric immersion $f:(\Sigma,g) \emb (\M,\gbar)$ is said to be {\bf $\e$-elliptic} if the (relative) Gaussian curvature $K$ of $f$ satisfies $K\geq \e>0$ everywhere on $\Sigma$. The immersion $f$ or the immersed surface $f(\Sigma)$ is {\bf elliptic} if $f$ is $\e$-elliptic for some $\e$. It is {\bf degenerate elliptic} if $K\geq 0$ everywhere on $\Sigma$. 
\end{definition}

Let $f_\varepsilon:(\Sigma,g_\varepsilon)\longrightarrow(M,\overline g)$ a family of elliptic isometric immersions. Here and throughout, we reserve the symbols   $\nabla^\e$ and $\overline\nabla$ for the Levi-Civita connections of $g_\varepsilon$ and $\overline g$, respectively. For the Riemann curvature tensor corresponding to $\overline\nabla$, we write \begin{align*}
R^\M(X,Y)Z =\overline\nabla_X\overline\nabla_YZ- \overline\nabla_Y\overline\nabla_XZ-\overline\nabla_{[X,Y]}Z
\end{align*}
and $R^\M(X,Y,Z,W)=\overline g\bigl(R^\M(X,Y)Z,W\bigr)$ for vector fields $X,Y,Z,W \in \G(T\M)$.

By ellipticity of $f_\e$, we may choose unit normal vector fields $\nu_\e$ along $f_\e(\Sigma)$ such that the second fundamental form
$$
\mathrm{II}_\varepsilon(X,Y)=\overline g\bigl(\overline\nabla_{df_\varepsilon(X)}df_\varepsilon(Y),\nu_\varepsilon\bigr)
$$
is positive definite. The shape operator $S_\varepsilon$ is given by
$$
\mathrm{II}_\varepsilon(X,Y)=g_\varepsilon(S_\varepsilon X,Y),
$$
whose eigenvalues are the principal curvatures $\{\kappa_{j,\e}:j=1,2\}$, ordered as $0<\kappa_{1,\varepsilon}\leq\kappa_{2,\varepsilon}$. 

The relative Gaussian curvature of $f_\varepsilon$ is $k_\varepsilon(x)^2:=K_{g_\varepsilon}(x)-\sec_{\overline g}\bigl(df_\varepsilon(T_x\Sigma)\bigr)>0$; see \eqref{small k-eps, def}. The Gauss equation reads $k_\varepsilon^2=\det S_\varepsilon=\kappa_{1,\varepsilon}\kappa_{2,\varepsilon}$. In particular, for $\{f_\e\}$ arising from the strictly elliptic approximation as in Proposition~\ref{prop:curvature-compatible-approximation}, as $\sec_{\overline g}\leq K_0$, we then have 
\begin{align}
k_\varepsilon^2 \geq K_{g_\varepsilon}-K_0 \geq c_0(k^2+\varepsilon).
\label{eq:relative-curvature-lower-bound}
\end{align}

\begin{lemma}\label{lem:inverse-curvature-transfer}
Let $k_\e^2$ and $k^2$ be the relative Gaussian curvatures of $f_\e$ and $f$ as in Proposition~\ref{prop:curvature-compatible-approximation}, respectively. There exists a constant $C$ uniform on $\e$ such that
\begin{align*}
\sup_\varepsilon \left\|k_\varepsilon^{-1}\right\|_{L^p(\Sigma,g_\varepsilon)}
\leq C\left\|k^{-1}\right\|_{L^p(\Sigma,g)}.
\end{align*}
\end{lemma}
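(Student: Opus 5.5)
The bound will follow from the pointwise lower bound \eqref{eq:relative-curvature-lower-bound}, combined with the uniform comparability of the volume forms $\dvge$ and $\dvg$. I will carry this out in three steps.

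\emph{Pointwise bound.} Since $\sec_\gbar\leq K_0$, Proposition~\ref{prop:curvature-compatible-approximation} and the Gauss equation give
\begin{align*}
k_\e^2 \geq K_{g_\e}-K_0 \geq c_0\bigl(k^2+\e\bigr) \geq c_0 k^2 \qquad\text{a.e. on }\Sigma,
\end{align*}
with $c_0$ independent of $\e$. Hence $k_\e^{-1}\leq c_0^{-1/2}k^{-1}$ pointwise, where the right-hand side is understood as $+\infty$ on $\Z$. Note also that $k_\e^{-1}\leq (c_0\e)^{-1/2}$ is finite for each fixed $\e$, so $k_\e^{-p}$ is integrable and the norm on the left-hand side is well defined.

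\emph{Volume comparison.} Since $g_\e\to g$ in $C^{2,\alpha}$, and in particular uniformly in $C^0$ in a fixed smooth atlas, the densities satisfy $\dvge = \rho_\e\,\dvg$ with $\rho_\e=\sqrt{\det g_\e/\det g}\to 1$ uniformly. Hence, after shrinking $\e_0$ if necessary, $\rho_\e\leq 2$ for all $0<\e<\e_0$. For the finitely many larger $\e$ one could instead use the uniform $W^{3,\infty}$-bound on $g_\e$ together with the positive-definiteness of $g$. The simplest route, however, is to shrink $\e_0$, which is harmless for the statement.

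\emph{Integration.} Combining the two steps,
\begin{align*}
\int_\Sigma k_\e^{-p}\,\dvge \leq 2c_0^{-p/2}\int_\Sigma k^{-p}\,\dvg,
\end{align*}
and taking $p$-th roots gives the claim with $C=2^{1/p}c_0^{-1/2}$. The only subtle point is making sure the constant $c_0$ in Proposition~\ref{prop:curvature-compatible-approximation} is genuinely uniform in $\e$, which is part of that statement. Everything else is routine.
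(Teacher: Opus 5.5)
Your proposal is correct and follows the paper's proof: the pointwise bound $k_\e^{-1}\leq c_0^{-1/2}k^{-1}$ comes from $k_\e^2\geq K_{g_\e}-K_0\geq c_0(k^2+\e)$, and the uniform equivalence of $\dvge$ and $\dvg$ comes from $g_\e\to g$ in $C^{2,\alpha}$. One small slip is that $\e$ ranges over an interval, so there are no ``finitely many larger $\e$''; your alternative (the uniform $W^{3,\infty}$-bound on $g_\e$ together with positive-definiteness of $g$) bounds the density ratio for all $\e\in]0,\e_0[$ anyway, so nothing needs shrinking.
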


\begin{proof}[Proof of Lemma~\ref{lem:inverse-curvature-transfer}]
By \eqref{eq:relative-curvature-lower-bound}, we have the pointwise bound
\begin{align*}
k_\varepsilon^{-1} \leq c_0^{-1/2}(k^2+\varepsilon)^{-1/2} \leq c_0^{-1/2}k^{-1}.
\end{align*}
Meanwhile, the $C^1$ (indeed, $C^{2,\alpha}$) norms of $g_\e-g$ tends to zero, and hence the Riemannian volume forms $\dvg$ and $\dvge$ are uniformly equivalent. This proves the assertion.
\end{proof}

\subsection{Uniform bounds}\label{subsec: unif bd}
As in Theorem~\ref{thm:conditional-W3p}, assume throughout that the images of $\{f_\e\}$ lie within an $\e$-independent compact set $\mc\Subset \M$, where $f_\e$ is a family of elliptic isometric immersions obtained, \emph{e.g.}, from Proposition~\ref{prop:curvature-compatible-approximation}. Then, for some $\Lambda \geq 1$ independent of $\e$, we have that
\begin{align} \label{eq:uniform-coordinate-geometry}
\sup_{\varepsilon}\left(\|g_\varepsilon\|_{W^{3,\infty}}+\|g_\varepsilon^{-1}\|_{W^{3,\infty}}\right)+\|\overline g\|_{W^{3,\infty}}+\|\overline g^{-1}\|_{W^{3,\infty}}\leq\Lambda,
\end{align}
with the norms of $\gbar$ and $\gbar^{-1}$ taken over $\mc$. Then, in view of the construction of isothermal coordinates, we immediately deduce the following lemma, whose proof is safely omitted. 

\begin{lemma}\label{lem:uniform-isothermal-charts}
There exists a radius $r_{\rm iso}>0$ independent of $\e$ such that the following holds. For each $x_0 \in \Sigma$, the geodesic ball $\ball^{g_\e}_{16r_{\rm iso}}(x_0) \subset \Sigma$ lies in an isothermal coordinate chart, in which
\begin{align*}
g_\varepsilon=e^{2u_\varepsilon}(dx^2+dy^2).
\end{align*}
In addition, with norms taken over this chart, 
\begin{align} \label{eq:uniform-isothermal-bounds}
\left\|u_\varepsilon\right\|_{W^{2,\infty}}+\left\|e^{2u_\varepsilon}\right\|_{L^\infty}+\left\|e^{-2u_\varepsilon}\right\|_{L^\infty}\leq C_{\mathrm{iso}} <\infty.
\end{align}
In addition, on any geodesic ball of radius at most $8r_{\mathrm{iso}}$, the Euclidean metric and $g_\e$  comparable in $C^0$-topology, modulo constants independent of $\e$. 
\end{lemma}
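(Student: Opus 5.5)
We will prove Lemma~\ref{lem:uniform-isothermal-charts} by running the standard construction of isothermal coordinates with every constant tracked in terms of $\Lambda$ from \eqref{eq:uniform-coordinate-geometry}.

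\emph{Step 1: uniform charts.} Since $\Sigma$ is compact, we fix once and for all a finite smooth atlas $\{(U_i,\phi_i)\}$ with a positive Lebesgue number. The bound \eqref{eq:uniform-coordinate-geometry} says that the components of $g_\e$ and $g_\e^{-1}$ are bounded in $W^{3,\infty}$ uniformly in $\e$. Hence in each chart $g_\e$ is uniformly equivalent to the Euclidean metric, and the Lipschitz constants of the transition maps relative to $g_\e$ are uniform. It follows that there is $r_0>0$, independent of $\e$ and $x_0$, such that $\ball^{g_\e}_{r_0}(x_0)$ lies inside a Euclidean disc $D$ of some chart $U_i$. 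We also get Euclidean-to-$g_\e$ comparability there, which gives the last assertion once we show that the isothermal map is uniformly bi-Lipschitz.

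\emph{Step 2: solving the Beltrami equation.} On $D$ we write $g_\e=\lambda_\e|dz+\mu_\e d\bar z|^2$. The Beltrami coefficient satisfies $\|\mu_\e\|_{L^\infty}\le k<1$ with $k=k(\Lambda)$, and $\|\mu_\e\|_{W^{3,\infty}}\le C(\Lambda)$. We cut $\mu_\e$ off to a compactly supported function on $\mathbb{C}$ and apply the measurable Riemann mapping theorem to obtain a normalised quasiconformal solution $w_\e$ of $\bar\partial w=\mu_\e\partial w$.

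Higher regularity comes from the Schauder theory for the Beltrami equation, or equivalently from the Bojarski $L^p$ theory followed by differentiation of the equation. This gives $\|w_\e\|_{C^{3,\alpha}}\le C(\Lambda)$. A uniform lower bound $|\partial w_\e|\ge c(\Lambda)>0$ on a smaller disc follows from compactness: the family of admissible $\mu$ is precompact in $C^{2,\alpha}$, the limit maps are diffeomorphisms, and so their Jacobians are bounded below, uniformly by a contradiction argument.

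In the coordinates $w_\e$ we have $g_\e=e^{2u_\e}|dw|^2$ with
\[
e^{2u_\e}=\lambda_\e|\partial w_\e|^{-2}.
\]
This yields uniform bounds on $u_\e$ in $W^{2,\infty}$, and even in $C^{2,\alpha}$, as well as two-sided bounds on $e^{\pm2u_\e}$. These are the bounds in \eqref{eq:uniform-isothermal-bounds}.

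\emph{Step 3: choosing the radius.} The map $w_\e$ is uniformly bi-Lipschitz. Therefore the image disc contains a geodesic ball of some uniform radius, and we set $16r_{\rm iso}$ equal to that radius.

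\emph{Main obstacle.} The delicate point is the uniform lower bound on the Jacobian of $w_\e$, since without it $e^{-2u_\e}$ could blow up. We would handle it with the compactness-and-contradiction argument described in Step 2.

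An alternative route is to solve $\Delta_{g_\e}v=0$ with near-linear data and use harmonic coordinates. This needs the same Jacobian bound, which there follows from a perturbation argument on small balls.
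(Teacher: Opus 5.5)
Your argument is correct. It is the standard construction of isothermal coordinates (Beltrami equation, Schauder regularity, nonvanishing Jacobian of the principal solution) with all constants tracked through \eqref{eq:uniform-coordinate-geometry}, which is what the paper has in mind when it omits the proof. One small correction to your side remark: harmonic coordinates are not isothermal in general, so on that route you must pair a $g_\e$-harmonic function $v$ with its conjugate $v^*$, defined by $dv^*=\star_{g_\e}dv$, rather than take two independent harmonic functions.
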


We also make a simple observation: for an \textit{a priori} fixed atlas $\mathscr{A}$ on $\M$ with respect to which the $W^{3,\infty}_\loc$-norm of $\gbar$ is taken, by a direct Lebesgue number argument, we may require the images of $f_\e$ over $\ball^{g_\e}_{16r_{\rm iso}}(x_0)$ for any $x_0$ to lie in a single chart of $\mathscr{A}$  by shrinking $r_{\rm iso}$ if necessary.

\begin{lemma} \label{lem:uniform-bounds}
Under the assumption~\eqref{eq:uniform-global-pinching}, namely that $\frac{k_\varepsilon}{H_\varepsilon}\geq\tau>0$, we have 
$0<k_\varepsilon\leq C_\tau$, $k_\varepsilon\leq H_\varepsilon\leq\tau^{-1}k_\varepsilon$, and 
\begin{align} \label{eq:k2-Lipschitz}
\left\|\nabla(k_\varepsilon^2)\right\|_{L^\infty(\Sigma,g_\varepsilon)} + \|\mathrm{II}_\varepsilon\|_{L^\infty(\Sigma,g_\varepsilon)}+\|S_\varepsilon\|_{L^\infty(\Sigma,g_\varepsilon)} \leq C_\tau,
\end{align} 
where $C_\tau$ is a uniform constant that may depend on $\tau$ but not on $\e$. 
\end{lemma}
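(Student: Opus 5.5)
The bounds on $k_\e$ and $H_\e$ are elementary, so I will dispose of them first. By AM--GM, $H_\e=\frac{\kappa_{1,\e}+\kappa_{2,\e}}{2}\geq\sqrt{\kappa_{1,\e}\kappa_{2,\e}}=k_\e>0$. The pinching hypothesis~\eqref{eq:uniform-global-pinching} directly gives $H_\e\leq\tau^{-1}k_\e$. For the upper bound on $k_\e$, use the Gauss equation $k_\e^2=K_{g_\e}-\sec_\gbar\mres[(f_\e)_\#(T\Sigma)]$. The first term is bounded by~\eqref{eq:uniform-coordinate-geometry}, since $K_{g_\e}$ involves two derivatives of $g_\e$. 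The second term is bounded because the images of $f_\e$ lie in the compact set $\mc$ and $\gbar\in W^{3,\infty}$ there. Hence $k_\e\leq C$, and $H_\e\leq\tau^{-1}C$. Since $0<\kappa_{1,\e}\leq\kappa_{2,\e}\leq 2H_\e$, the shape operator is bounded: $|S_\e|_{g_\e}\leq\sqrt2\,\kappa_{2,\e}\leq C_\tau$. The same bound holds for $\two_\e$, as $|\two_\e|_{g_\e}=|S_\e|_{g_\e}$.

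The remaining and main point is the Lipschitz bound on $k_\e^2$. Again write $k_\e^2=K_{g_\e}-\sec_\gbar(df_\e(T_x\Sigma))$. First, $\na K_{g_\e}$ involves third derivatives of $g_\e$ and is therefore bounded by $\Lambda$ in~\eqref{eq:uniform-coordinate-geometry}. Second, set $\Phi(p,\nu):=\sec_\gbar(\nu^\perp)$. This is a function on the unit sphere bundle of $\mc$, and it is $W^{1,\infty}$ because $R^\M$ involves two derivatives of $\gbar\in W^{3,\infty}$. With $\nu_\e$ the unit normal, we have $\sec_\gbar(df_\e(T_x\Sigma))=\Phi(f_\e(x),\nu_\e(x))$. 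By the chain rule, its derivative is controlled by $\|\Phi\|_{W^{1,\infty}}\bigl(|df_\e|+|\overline\na\nu_\e|\bigr)$. Here $|df_\e|_{g_\e,\gbar}=\sqrt2$ by isometry, and the Weingarten equation gives $\overline\na_{df_\e X}\nu_\e=-df_\e(S_\e X)$, so $|\overline\na\nu_\e|\leq|S_\e|\leq C_\tau$ by the previous step. Altogether $\|\na(k_\e^2)\|_{L^\infty}\leq\Lambda+C(\Lambda)(1+C_\tau)$.

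The expected subtlety is making the chain rule rigorous at this low regularity, with $\Phi$ only Lipschitz and the composition taken inside a single chart of $\mathscr A$. I would handle it by working in the charts provided after Lemma~\ref{lem:uniform-isothermal-charts}. In such a chart, $R^\M$ is a Lipschitz function of position with uniformly bounded Lipschitz constant. Also, $\sec_\gbar(\nu^\perp)=R^\M(e_1,e_2,e_2,e_1)$ for an orthonormal frame $e_1,e_2$ of $\nu^\perp$, so it depends smoothly on $\nu$. Finally, $(f_\e,\nu_\e)$ is $C^1$, since $f_\e\in C^3$. The composition of a Lipschitz function with a $C^1$ map is Lipschitz, with the expected a.e.\ bound, which gives the estimate.
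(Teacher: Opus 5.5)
Your proposal is correct and follows essentially the paper's argument. The elementary bounds come from AM--GM, the pinching hypothesis, and the uniform curvature bounds for $g_\e$ and for $\gbar$ on $\mc$. The Lipschitz bound on $k_\e^2$ comes from differentiating $\sec_{\gbar}\bigl(df_\e(T\Sigma)\bigr)$, using $R^\M\in W^{1,\infty}$ and controlling the motion of the tangent plane by $\two_\e$. The only difference is cosmetic: you parametrise the plane by its unit normal and use the Weingarten equation, whereas the paper uses a tangent frame with $\nabla e_i|_x=0$ and the Gauss formula $\overline\nabla_{df_\e(X)}df_\e(e_i)=\two_\e(X,e_i)\nu_\e$.
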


\begin{proof}[Proof of Lemma~\ref{lem:uniform-bounds}]
Both $K_{g_\varepsilon}$ and the ambient sectional curvature on $\mc$ are controlled in~\eqref{eq:uniform-coordinate-geometry}, so $k_\e \leq C_\tau$ follows. By~\eqref{eq:uniform-global-pinching} and the arithmetic-geometric mean inequality, we have  $k_\varepsilon\leq H_\varepsilon\leq\tau^{-1}k_\varepsilon$. As both principal curvatures are positive, we deduce $
|\mathrm{II}_\varepsilon|_{g_\varepsilon} \leq \kappa_{1,\varepsilon}+\kappa_{2,\varepsilon} = 2H_\varepsilon \leq C_\tau$. The same estimate holds for $S_\varepsilon$.

Finally, fix any $x\in\Sigma$ and local $g_\varepsilon$-orthonormal frame $\{e_1,e_2\}$ such that $\nabla e_i\big|_x=0$. Then
$$
\sec_{\overline g}\bigl(df_\varepsilon(T_x\Sigma)\bigr)=R^\M\bigl(df_\varepsilon(e_1),df_\varepsilon(e_2),df_\varepsilon(e_2),df_\varepsilon(e_1)\bigr).
$$
In view of the Gauss formula, one has
$$
\overline\nabla_{df_\varepsilon(X)}df_\varepsilon(e_i)=\mathrm{II}_\varepsilon(X,e_i)\nu_\varepsilon.
$$
for every $X\in T_x\Sigma$. But $R^\M\in W^{1,\infty}$ on $\mc$, so by differentiation it holds \textit{a.e.} that
$$
\left|X\!\left[\sec_{\overline g}\bigl(df_\varepsilon(T\Sigma)\bigr)\right]\right| \leq C\bigl(1+|\mathrm{II}_\varepsilon|\bigr)|X|_{g_\varepsilon}.
$$
Using the $L^\infty$-bound for $\mathrm{II}_\varepsilon$ just obtained, we deduce the global bound
\begin{align*}
\left|X\!\left[\sec_{\overline g}\bigl(df_\varepsilon(T\Sigma)\bigr)\right]\right| \leq C_\tau|X|_{g_\varepsilon} \qquad\text{for each } X \in \G(T\Sigma).
\end{align*}
Moreover, 
$|\nabla K_{g_\varepsilon}|_{g_\varepsilon} \leq C$ thanks to the uniform $W^{3,\infty}$-bound for $g_\e$. Thus, $$
|\nabla(k_\varepsilon^2)|_{g_\varepsilon} \leq |\nabla K_{g_\varepsilon}|_{g_\varepsilon}
 +\left| \nabla\!\left[ \sec_{\overline g}\bigl(df_\varepsilon(T\Sigma)\bigr) \right]\right|_{g_\varepsilon} \leq C_\tau.
$$
\end{proof}
 
\subsection{Pseudo-holomorphic curves}
In the seminal work~\cite{Labourie}, Labourie (1989) studied the generalised Weyl problem in the strictly elliptic regime from the perspectives of pseudo-holomorphic curves (\textit{a.k.a.} $J$-holomorphic curves). It requires that the ambient manifold $\M$ to be simply-connected, in contrast to \cite{LuWeyl, glu}. Let us streamline in this subsection the pseudo-holomorphic curve formulation for the isometric immersion problem in~\cite{Labourie}. 

\subsubsection{$1$-jet of elliptic isometric immersions}

Let $f_\e:(\Sigma,g_\e) \emb (\M,\gbar)$ be an $\e$-elliptic isometric immersion. Let
\begin{equation}\label{pi-Sigma, bundle}
\begin{matrix}
\E:={\rm Isom}(T\Sigma,T\M)\\
\Big\downarrow \pi_\Sigma\\
\Sigma \times \M
\end{matrix}
\end{equation}
be the bundle of isometries  $T\Sigma \to T\M$. Then the 1-jet of $f_\e$, $$j_1f_\e(x) := (x,f_\e(x), d_x{f_\e}),$$ is a pseudo-holomorphic map from $\Sigma$ to $\E$. See Lemma~\ref{lem: one-jet is J-hol} below for details. 

In general, for $f_\e$ not necessarily isometric, $j_1f_\e$ takes values in  the 1-jet bundle $\J^1(\Sigma,\M)$, whose fibres over $\Sigma \times \M$ are $\J^1(\Sigma,\M)\big|_x :=  \Sigma \times \M \times \left(T_x^*\Sigma \otimes T_{f_\e(x)}\M\right)$.

\subsubsection{The effective component of $T\E$ for elliptic isometric immersions}
Observe that  
\begin{equation*}
    T_{\xi_\e}\E = \V_\e\oplus\U_\e\qquad\text{for each } \xi_\e = df_\e=(f_\e)_\#. 
\end{equation*}
The key component here is
\begin{align}\label{Xi, V}
\V_\e = \Big\{ \GG_\e(u,v):= \big(u,\xi_\e(u),\,k_\e\xi_\e(v)\big):\,u,v\in T\Sigma \Big\},
\end{align}
where $k_\e^2$ is the relative Gaussian curvature; \textit{i.e.}, the difference between the Gaussian curvature of $(\Sigma,g)$ and that of $(f_\e)_\#(T\Sigma)$ as in \eqref{small k-eps, def}. 

Notice the following three geometric properties of $\V_\e$:
\begin{itemize}
    \item[(i)]
$\V_\e$  is equipped with an almost complex structure:
\footnote{More precisely, we should emphasise that $J_\e$ is defined over $\{\xi \in \E: k_\e(\xi)>0\}$; but here $f_\e$ is $\e$-elliptic, so we may define $J_\e$ over $\V_\e$. Meanwhile, the definition of $J_\e$ on $\U_\e$ has no impact for our purpose, as the image of $j_1f_\e$ lies entirely in $\V_\e$ whenever $f_\e$ is an isometric immersion.}
\begin{equation}\label{almost cplx structure J}
  \J_\e := \big[\GG_\e(u,v) \longmapsto \GG_\e(v,-u) \big]. 
\end{equation}

\item[(ii)] 
$\V_\e$ is topologised by the following Hermitian metric: 
\begin{equation}\label{herm metric}
\mu_\e\Big(\GG_\e(u_1,v_1),\GG_\e(u_2,v_2)\Big) := k_\e g_\e(u_1,u_2) + k_\e g_\e(v_1,v_2).
\end{equation}

\item[(iii)]
$\V_\e$ is calibrated (\cite[2.10]{Labourie}): there exists a 1-form $\beta_\e \in \Omega^1(O_\e)$, where $O_\e$ is a neighbourhood of $j_1f_\e(\Sigma)$, such that \begin{equation*}
d\beta_\e\big(x, \J_\e(x)\big)>0.
\end{equation*}
It then follows from \emph{le lemme de Schwarz \`{a} la Gromov} (\cite{g, ms}) that, if $j_1f_\e(\Sigma)$ is precompact, then $j_1f_\e$ is smooth with uniformly bounded derivatives of all orders. 
\end{itemize}

\subsubsection{Two complex structures}
We also make use of two natural complex structures arising from the first and second fundamental forms of the $\e$-elliptic isometric immersion $f_\e$, denoted $\jone$ and $\jtwo$, respectively. 
\begin{itemize}
    \item[(i)]
    We set $\jone$ as the $90^{\circ}$ rotation prescribed by $g_\e$, namely
    \begin{equation}\label{J1}
        \jone X:= (f_\e)^\#\left[\nu_\e \times (f_\e)_\#X\right]\qquad\text{for each } X \in \G (T\Sigma), 
    \end{equation}
where $\times$ is the vector product in the three-manifold $(\M,\gbar)$ and $\nu_\e$ is the unit normal vector field along $(f_\e)_\# (T\Sigma)$. Note that $(\jone)^2 = -{\bf Id}_{T\Sigma}$.

    \item[(ii)]
For elliptic isometric immersion $f_\e$, when $\nu_\e$ is suitably chosen, the second fundamental form $\two_\e$ becomes a Riemannian metric on $\Sigma$.\footnote{This observation has also been explored by Lin~\cite{FHLin} in his harmonic mapping formulation for the isometric immersion problem.}  Thus, $    \two_\e\left(\jtwo u, \jtwo v\right) \equiv \two_\e(u,v)$ for any $u,v \in\G(T\Sigma)$, and $(\jtwo)^2=-{\bf Id}_{T\Sigma}$. 
    
\end{itemize}

\begin{lemma}\label{lem: one-jet is J-hol}
When $f_\e:(\Sigma,g) \emb (\M,\gbar)$ is an $\e$-elliptic isometric immersion, the 1-jet is a pseudo-holomorphic map:
   \begin{align*}
       j_1f_\e: \left(\Sigma, \jtwo\right) \longrightarrow \left( \V_\e, \J_\e \right).
   \end{align*}
\end{lemma}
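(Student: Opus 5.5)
To prove the lemma we need two facts. First, the differential $d(j_1f_\e)$ takes values in $\V_\e$. Second, $d(j_1f_\e)\circ\jtwo=\J_\e\circ d(j_1f_\e)$. Both reduce to the Gauss formula together with linear algebra relating $\two_\e$ and $g_\e$.

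\textbf{Computing the differential.} Fix $x\in\Sigma$ and $X\in T_x\Sigma$. Using the Levi-Civita connections $\na^\e$ and $\overline\na$ to split $T\E$ into horizontal and vertical parts, we get
\[
d(j_1f_\e)(X)=\bigl(X,\,df_\e(X),\,\overline\na_X(df_\e)\bigr),
\]
where $\overline\na_X(df_\e)(Y)=\overline\na_{df_\e(X)}df_\e(Y)-df_\e(\na^\e_XY)$. By the Gauss formula, already used in the proof of Lemma~\ref{lem:uniform-bounds}, this vertical part is
\[
Y\longmapsto \two_\e(X,Y)\,\nu_\e=g_\e(S_\eX,Y)\,\nu_\e .
\]
We then need the identification of $T_{\xi_\e}$ of the fibre ${\rm Isom}(T_x\Sigma,T_{f_\e(x)}\M)$ that underlies the notation $\GG_\e(u,v)=(u,\xi_\e(u),k_\e\xi_\e(v))$. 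Following Labourie, the vertical directions that preserve the tangent plane are infinitesimal rotations. The ones relevant here have the form $Y\mapsto g_\e(w,Y)\nu_\e$ for $w\in T_x\Sigma$, and they are identified with $\xi_\e(w)$, i.e.\ with $df_\e(w)$ rotated into the normal direction. With this identification,
\[
d(j_1f_\e)(X)=\GG_\e\bigl(X,\,k_\e^{-1}S_\e X\bigr).
\]
This shows $d(j_1f_\e)(X)\in\V_\e$. Writing $\GG_\e(u,v)$ with $u=X$ and $v=k_\e^{-1}S_\eX$ is legitimate because $k_\e>0$ by $\e$-ellipticity.

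\textbf{Intertwining the complex structures.} We need $\J_\e\,\GG_\e(X,k_\e^{-1}S_\eX)=\GG_\e(\jtwo X,\,k_\e^{-1}S_\e\jtwo X)$. By \eqref{almost cplx structure J} the left side is $\GG_\e(k_\e^{-1}S_\eX,\,-X)$, so we must show
\[
\jtwo=k_\e^{-1}S_\e \quad\text{and}\quad k_\e^{-1}S_\e\jtwo X=-X.
\]
The second identity follows from the first once $(k_\e^{-1}S_\e)^2=-{\bf Id}$. That is false for the symmetric operator $S_\e$ itself. The fix is to set $\jtwo:=k_\e^{-1}S_\e\jone$, suitably ordered, and to adjust the identification above so that $d(j_1f_\e)(X)=\GG_\e(X,\,k_\e^{-1}\jone S_\eX)$; the extra $\jone$ arises naturally from the $90^\circ$ rotation identifying $T\Sigma$ with the normal rotations.

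The key algebraic facts are the following. In a $g_\e$-orthonormal eigenbasis of $S_\e$, we have $\jone=\begin{pmatrix}0&-1\\1&0\end{pmatrix}$ and $S_\e={\rm diag}(\kappa_{1,\e},\kappa_{2,\e})$. Then
\[
\bigl(k_\e^{-1}\jone S_\e\bigr)^2=-k_\e^{-2}\kappa_{1,\e}\kappa_{2,\e}\,{\bf Id}=-{\bf Id},
\]
by the Gauss equation $k_\e^2=\det S_\e$. Moreover, $k_\e^{-1}\jone S_\e$ preserves $\two_\e$, since $\two_\e(\jone S_\eu,\jone S_\ev)=\det S_\e\cdot\two_\e(u,v)$. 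Hence this operator is exactly the $\two_\e$-rotation $\jtwo$, up to orientation. The intertwining relation then reduces to $\jtwo\jtwo=-{\bf Id}$.

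\textbf{Main obstacle.} The hardest step is pinning down the identification of the vertical tangent space of ${\rm Isom}(T\Sigma,T\M)$ with $T\Sigma$ that makes \eqref{Xi, V} consistent, in particular where the factor $k_\e$ and the rotation $\jone$ enter. I would fix this by matching Labourie's conventions directly: check on the round sphere in $\R^3$, where $S_\e={\bf Id}$, $k_\e=1$ and $\jtwo=\jone$. That case determines the signs and orientation. The general case then follows from the eigenbasis computation above.
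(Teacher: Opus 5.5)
Your final step is the paper's proof: the paper quotes $d(j_1f_\e)(u)=\GG_\e(u,\jtwo u)$, i.e.\ \eqref{identity, d of 1-jet}, from Labourie's 2.5 Proposition. It then checks $\J_\e\GG_\e(u,\jtwo u)=\GG_\e(\jtwo u,(\jtwo)^2u)=d(j_1f_\e)(\jtwo u)$, which is your reduction to $(\jtwo)^2=-{\bf Id}$.

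Where you go further and derive that formula from the Gauss formula, your first step is fine: the vertical part of $d(j_1f_\e)(X)$ is $Y\mapsto\two_\e(X,Y)\nu_\e$. The identification step, however, is a genuine gap, and the repair you propose is not legitimate. How vertical vectors correspond to the third slot $k_\e\xi_\e(v)\in T_y\M$ is part of the definition \eqref{Xi, V}. You cannot ``adjust'' it, or calibrate it on an example, until the lemma comes out true; that is circular. Likewise $\jtwo$ is not yours to define: it is fixed by \eqref{S, jone, jtwo, 2}. The order also matters, since $k_\e^{-1}S_\e\jone$ is not an $\two_\e$-isometry away from umbilics.

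The missing point is that the standard identification $\mathfrak{so}(T_y\M)\cong T_y\M$, under which \eqref{identity, d of 1-jet} holds, already contains the rotation and fixes the sign. A vertical vector at $\xi_\e$ is $\Omega\circ\xi_\e$ for a unique $\Omega\in\mathfrak{so}(T_y\M)$. In turn $\Omega=\omega\times(\cdot)$ for a unique $\omega\in T_y\M$, and the third slot of $\GG_\e$ records $\omega$. Here $(\Omega\circ\xi_\e)(Y)=\gbar\bigl(\xi_\e(S_\e X),\xi_\e(Y)\bigr)\nu_\e$. The identity $(a\times b)\times c=\gbar(a,c)\,b-\gbar(b,c)\,a$, together with \eqref{J1} and \eqref{S, jone, jtwo, 2}, then gives
\[
\omega=\xi_\e(S_\e X)\times\nu_\e=-\xi_\e\bigl(\jone S_\e X\bigr)=k_\e\,\xi_\e\bigl(\jtwo X\bigr).
\]
So $d(j_1f_\e)(X)=\GG_\e(X,\jtwo X)\in\V_\e$, and the paper's two-line check finishes.

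By contrast, your formula $\GG_\e(X,k_\e^{-1}\jone S_\e X)$ has the wrong sign. Since $k_\e^{-1}\jone S_\e=-\jtwo$, it would give $\J_\e\circ d(j_1f_\e)=d(j_1f_\e)\circ(-\jtwo)$, which is an anti-holomorphic map. Your round-sphere calibration has the same defect: with $S_\e={\bf Id}$ and $k_\e=1$, \eqref{S, jone, jtwo, 1} forces $\jtwo=-\jone$, not $\jone$. ``Up to orientation'' is not a free choice once $S_\e=k_\e\jone\jtwo$ is imposed.
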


\begin{proof}[Proof of Lemma~\ref{lem: one-jet is J-hol}]
It holds by \cite[2.5 Proposition]{Labourie} that 
\begin{align}\label{identity, d of 1-jet}
    d(j_1f_\e)(u) = \left(u, df_\e(u), k_\e df_\e \left(\jtwo u\right) \right).
\end{align}
By the notation in~\eqref{Xi, V}, one has that
\begin{align*}
    d(j_1f_\e)(u) = \mathcal{G}_\e\left(u, \jtwo(u)\right).
\end{align*}
This together with the definition of $\J_\e$ in~\eqref{almost cplx structure J}, $(\jtwo)^2=-{\bf Id}$, and~\eqref{identity, d of 1-jet} implies that 
\begin{align*}
    \J_\e \Big( d(j_1f_\e)(u)\Big) &= \mathcal{G}_\e  \left(\jtwo u,-u\right)\\
    &= \mathcal{G}_\e  \left(\jtwo u,\left(\jtwo\right)^2 u\right)\\
    &=  d(j_1f_\e)\left(\jtwo u\right)\qquad\text{for any $u \in \G(T\Sigma)$.}
\end{align*}
Hence the assertion follows.  \end{proof}

\subsubsection{Shape operator}
Let $S_\e$ be the shape operator of $f_\e$, namely that
\begin{equation}\label{shape operator, def}
    \two_\e(u,v) = g_\e\left(S_\e u, v\right)\qquad\text{for any } u,v \in\G(T\Sigma).
\end{equation}
It then holds that
\begin{equation}\label{S, jone, jtwo, 1}
    S_\e = k_\e \jone \circ \jtwo,
\end{equation}
where
\begin{equation*}
    k_\e = \sqrt{\det_{g_\e}\,S_\e}.
\end{equation*}
Equivalently, we may recover $\jtwo$ from the shape operator:
\begin{equation}
    \label{S, jone, jtwo, 2}
    \jtwo = -\frac{1}{k_\e}\jone\circ S_\e.
\end{equation}

\subsubsection{PDE for the mean curvature}\label{subsubsec: J-hol}
We now recall from~\cite[2.12, 2.13, and 3.6]{Labourie} the first-order PDE for the mean curvature $H_\e = \frac{1}{2}(\kappa_{1,\e}+\kappa_{2,\e})$:\footnote{Labourie's convention for the mean curvature is $H_\e = \kappa_{1,\e}+\kappa_{2,\e}$ instead.}
\begin{equation}\label{PDE for H'}
   \frac 12  dH_\e \left(\jtwo u\right) - (H_\e^2-k_\e^2) (\omega_\e u) = -\frac{1}{2}dk_\e \left(\jone u\right) -{\rm tr}\left(\jtwo \overline{R}_u \right),
\end{equation}
where
 \begin{align*}
     \overline{R}_u(v) = R^\M\left((f_\e)_\# u,(f_\e)_\# v\right)\nu_\e + \jone \left(R^\M\left((f_\e)_\# u,\jone (f_\e)_\# v\right) \nu_\e\right).
 \end{align*}
Note that $R^\M\left((f_\e)_\# u,\jone (f_\e)_\# v\right) \nu_\e$ is tangent to $(f_\e)_\# (T\Sigma)$. Meanwhile, $\omega_\e \in \Omega^1(T\Sigma)$ is the connection 1-form associated to the principal directions corresponding to $\kappa_{1,\e}$ and $\kappa_{2,\e}$.

In \cite{Labourie} the PDE~\eqref{PDE for H'} has been recast in the following form:
\begin{equation}\label{PDE for H}
    dH_\e\circ\jtwo = H_\e\beta_\e + 2\left(H_\e^2 - k_\e^2\right) \pi_\Sigma^\#\omega_\e\qquad\text{on } \V_\e,
\end{equation}
where $\V_\e$ is the component
 of $T\E$ as in \eqref{Xi, V}, and $\beta_\e \in \Omega^1(T\Sigma)$ given by
 \begin{align*}
     \left( j_1f_\e \right)^\#\beta_\e(u) = -\frac{1}{H_\e}dk_\e \left(\jone u\right) - \frac{1}{2H_\e} {\rm tr}\left(\jtwo \overline{R}_u \right).
 \end{align*}
Note that away from the umbilics, the connection form $\omega_\e$ satisfies
\begin{equation}\label{bound for omega}
    |\omega|_g \leq \frac{|\na S_\e|_g}{\kappa_{2,\e} - \kappa_{1,\e}},
\end{equation}
while near the umbilics, one has 
\begin{align*}
    2\left(H_\e^2 - k_\e^2\right) \omega_\e(u) = \frac{1}{2}{\rm tr}\left( \na_u S_\e \circ\jtwo\right). 
\end{align*}

Equivalently, for the quantity
\begin{equation}\label{W, def}
    W_\e := \frac{1}{H_\e}
\end{equation}
we have the PDE
\begin{align}\label{PDE for W}
    dW_\e\circ \jtwo
&=
-W_\e\beta_\e
-
2\bigl(1-(k_\e W_\e)^2\bigr)\pi_\Sigma^\#\omega_\e\nonumber\\
&= -W_\e\beta_\e
-
2
\left(
\frac{\kappa_{1,\e}-\kappa_{2,\e}}
     {\kappa_{1,\e}+\kappa_{2,e}}
\right)^2\pi_\Sigma^\#\omega_\e.
\end{align}

\section{Intrinsic sufficient condition for $C^{2,1}$-isometric immersions}\label{sec: smooth g}

Guan--Li~\cite{GL} and Hong--Zuily~\cite{hz} proved that any $C^4$-Riemannian metric $g$ on $\stwo$ with nonnegative Gaussian curvature admits a global $C^{1,1}$-isometric embedding into $\R^3$. On the other hand, Iaia~\cite{iaia} constructed an example of a real-analytic metric on $\stwo$ with positive Gaussian curvature everywhere except at one point, which admits only $C^{2,1}$- but not $C^3$-isometric embedding into $\R^3$. Meanwhile, Pogorelov’s counterexample in~\cite{pog} shows that a $C^{2,1}$-metric with nonnegative Gaussian curvature may not admit an $C^2$-isometric embedding. Note that such obstructions to the existence of smooth isometric embeddings are global, as C.-S. Lin~\cite{lin} has shown that for any smooth two-dimensional Riemannian metric with nonnegative
Gaussian curvature, there exists a smooth local isometric embedding into $\R^3$.

The above discussion leads to the following question posed in~\cite{GL}:

\begin{quote}
   \emph{Under what conditions on a smooth metric $g$ on $\stwo$ with nonnegative Gaussian curvature is there a global $C^{2,\alpha}$ isometric embedding into $\R^3$ for some $\alpha$, or even a global $C^{2,1}$ isometric embedding?}
\end{quote}

In the recent paper~\cite{jiang}, X. Jiang gave an affirmative answer to the above problem, under certain mild hypotheses on the extrinsic geometry of the isometric immersion $\XX$ of $g$. More precisely, it is assumed that near the degenerate point of $K_g$, there exists a local smooth\footnote{Indeed, $C^5$ will be enough; see \cite[Corollary~4.1]{jiang}.} coordinate system in which ${\XX}$ is graphical.

\begin{theorem}[Theorem~1.4 in \cite{jiang}]
Assume that we have a \( C^4 \)-metric \( g \) on a ball \( B(O, r) \subseteq \mathbb{R}^2 \) with Gaussian curvature \( K_g \geq 0 \). Assume that a \( C^{1,1} \)-isometric embedding \( \XX : (B(O, r), g) \to \mathbb{R}^3 \) is of the form
\[
\XX : (x, y) \longmapsto \big(x, y, u(x, y)\big)
\]
in some local coordinates \((x, y)\) such that $u(0,0)=u_x(0,0)=u_y(0,0)=0$. If in addition, \( u \) is weakly convex, then \( \XX \in C^{2,1}(B(O, r')) \) for any \( r' < r \).
\end{theorem}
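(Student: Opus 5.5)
The plan is to reduce the statement to a purely local regularity problem for a degenerate Monge--Amp\`{e}re equation, and then obtain third-order bounds through an elliptic regularisation with estimates uniform in the regularisation parameter.

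\emph{Reduction.} Since $\XX(x,y)=(x,y,u(x,y))$ is isometric, in these coordinates $g=dx^2+dy^2+du\otimes du$. The Gauss equation then becomes the Darboux-type equation
\begin{align*}
\det D^2u = K_g\,\bigl(1+|Du|^2\bigr)^2=:f \qquad\text{in } B(O,r),
\end{align*}
with $f\geq 0$, and $u$ weakly convex and $C^{1,1}$.

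\emph{Regularity of the right-hand side.} Here $K_g\in C^2$ because $g\in C^4$. I would first check that the coordinates $(x,y)$ are at least as regular as $g$ and $u$ permit. A priori $f$ is then only Lipschitz, since $Du$ is only Lipschitz. I would treat $f$ as a function of $(x,Du)$, namely
\begin{align*}
f=F(x,Du), \qquad F(x,p)=K_g(x)\bigl(1+|p|^2\bigr)^2,
\end{align*}
with $F\in C^2$ in $x$ and smooth in $p$, and $F\geq 0$. The key structural input is the Guan--Sawyer observation in two dimensions: for a weakly convex solution of $\det D^2u=F(x,Du)\geq 0$, $C^{1,1}$-regularity follows once $F^{1/(n-1)}=F$ is $C^{1,1}$. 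To go one derivative higher I would aim to show that the same structure, together with $K_g\in C^2$ and $K_g\geq 0$, gives $\sqrt{F}\in C^{0,1}$ uniformly, along with a bound on $D^2F$ that controls its degeneracy.

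\emph{Regularisation and third-derivative estimate.} Next I would solve the Dirichlet problems
\begin{align*}
\det D^2u_\e = F(x,Du_\e)+\e \quad\text{in } B(O,r''), \qquad u_\e=u \ \text{ on } \partial B(O,r''),
\end{align*}
with $r'<r''<r$, after mollifying $u$ on the boundary. These are uniformly elliptic, so the $u_\e$ are smooth, strictly convex, and converge to $u$ in $C^{1,\beta}$ by uniqueness of convex Alexandrov solutions. The main estimate is an interior bound
\begin{align*}
\|D^3u_\e\|_{L^\infty(B(O,r'))}\leq C \qquad\text{uniformly in } \e.
\end{align*}
I would derive it with a Calabi-type maximum principle applied to a quantity such as
\begin{align*}
\eta\,(F+\e)^{-1}\,u_\e^{ij}u_\e^{kl}u_\e^{pq}\,u_{\e,ikp}\,u_{\e,jlq}\,\cdot (\text{Hessian weight}),
\end{align*}
where $\eta$ is a cutoff. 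In two dimensions the cofactor matrix is just a rotated Hessian, so the linearised operator $L=u_\e^{ij}\partial_{ij}$ is explicitly computable. Differentiating the equation twice produces terms involving $D^2F$, which are exactly where $K_g\in C^2$, $K_g\geq0$ (hence $|DK_g|^2\leq C K_g$) and the $C^{1,1}$ bound on $u_\e$ enter. Passing to the limit $\e\to 0$ then gives $D^2u\in C^{0,1}(B(O,r'))$, i.e.\ $\XX\in C^{2,1}$.

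I expect the third-derivative estimate to be the main obstacle. The weights $u_\e^{ij}$ blow up where $F$ degenerates, and the bad terms $F_{x_ix_j}/F$ must be absorbed using $|DF|^2\leq CF$ (the $C^2$ version of Glaeser's inequality for nonnegative functions) together with the good term $u^{ij}u^{kl}u^{pq}u_{ikp}u_{jlq}$. The same difficulty arises in Guan's $C^{1,1}$ argument one level down. Should a fully global Calabi quantity fail, my fallback is to follow Daskalopoulos--Savin: blow up at a degenerate point, classify the limiting homogeneous profiles, and obtain the $C^{2,1}$ bound from an affine-invariant Caffarelli estimate on sections.
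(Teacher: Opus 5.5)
The paper does not prove this statement. It is quoted from Jiang~\cite{jiang} as background, so there is no in-paper argument to compare against; the nearest relative is the proof of Theorem~\ref{thm: dichotomy}, which is essentially your fallback. Judged on its own, your proposal has a genuine gap. The uniform interior bound $\|D^3u_\e\|_{L^\infty}\le C$ is the whole content of the theorem, and you assert it rather than derive it. The mechanism you indicate cannot deliver it as described.

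\begin{itemize}
\item Differentiating $\log\det D^2u_\e=\log(F+\e)$ twice produces, among other terms, $\partial_{kk}F/(F+\e)$ and $|\partial_kF|^2/(F+\e)^2$. Glaeser's inequality only turns the second into $C/(F+\e)$. It says nothing about the first, which is of size $(|x|^2+\e)^{-1}$ at a nondegenerate zero $F\approx|x|^2$.
\item More seriously, the Calabi quantity $\sigma=u^{ij}u^{kl}u^{pq}u_{ikp}u_{jlq}$ is unbounded on admissible solutions whose third derivatives are bounded, and at incompatible rates. For Iaia's local model $u=|x|^3$ (here $g=dx^2+dy^2+du^2$ is polynomial and $\det D^2u=18|x|^2$) one finds $\sigma=\tfrac23|x|^{-3}$. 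For $u=\tfrac{a}{12}x_1^4+\tfrac{1}{2a}x_2^2$ (again polynomial $g$, with $\det D^2u=x_1^2$) one finds $\sigma=\tfrac4a x_1^{-4}$.
\item Consequently $(\Delta u)^3\sigma$, the natural quantity dominating $|D^3u|^2$, is unbounded on the second model. Among powers $(\det D^2u)^\gamma\sigma$, only $\gamma=\tfrac32$ is both bounded and controls $|D^3u|$ on the first model, and it behaves like $|x_1|^{-1}$ on the second.
\end{itemize}

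So your unspecified ``Hessian weight'' is not a detail; it is precisely where the difficulty lives. This is also why the Calabi-type third-order estimates available for degenerate Monge--Amp\`ere equations (Guan--Sawyer~\cite{GS}, via~\cite{rsw}) require extra hypotheses such as $H\ge c>0$ and finite-type vanishing. It is also why the paper treats the balanced cubic branch by a separate blow-up argument.

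Your fallback does not close the gap either. The Daskalopoulos--Savin classification~\cite{DS} concerns $\det D^2v=|z|^\gamma\widetilde a$ with $\widetilde a>0$ H\"older, i.e.\ an isolated zero of homogeneous type. This is exactly how the paper proves Theorem~\ref{thm: dichotomy}: cubic rescaling, Caffarelli on annuli, Evans--Krylov for the rescaled Darboux equation, and Schauder for $\partial_ku$. It works only under $\operatorname{Hess}_gK_g(P)>0$. The cited statement allows any $0\le K_g\in C^2$, for example the zero line $\{x_1=0\}$ of the second model above, or vanishing to infinite order.

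You also never use the real extrinsic hypothesis, beyond $K_g\in C^2$ in the graph chart (and $C^{1,1}$ would suffice for your computation). That hypothesis is that the graph chart is regular, so that $g_{ij}=\delta_{ij}+u_iu_j$ is $C^4$; the paper remarks that a $C^5$ chart suffices. For a general locally convex $C^{1,1}$ immersion, the tangent-plane projection is only $C^{1,1}$ (Step~1 of the proof of Theorem~\ref{thm: dichotomy}). Your argument would therefore give $C^{2,1}$ for every locally convex $C^{1,1}$ isometric embedding of a $C^4$ metric with $K_g\ge0$. That would settle the Guan--Li question far beyond \eqref{hess}, which is another sign that the decisive estimate is missing.

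Finally, the regularisation step is not routine. Solvability of $\det D^2u_\e=F(x,Du_\e)+\e$ with $F$ quartic in $p$ needs justification, for instance via a Bakelman-type condition. Identifying $\lim u_\e$ with $u$ ``by uniqueness of convex Alexandrov solutions'' requires a comparison or uniqueness theorem for a degenerate, gradient-dependent Monge--Amp\`ere equation. That is not standard, and you have not supplied one.
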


We address Guan--Li's problem under purely \emph{intrinsic} hypotheses imposed only on the metric $g$ but not on its $C^{1,1}$-isometric immersion $\XX$:
\begin{equation}\label{hess}
\operatorname{Hess}_gK_g(P_j)>0\text{ for each $j$}\qquad \text{where }  \{K_g=0\}=\{P_1,\ldots,P_N\};
\end{equation}
\emph{i.e.}, the Gaussian curvature has zero Morse index at each degenerate point. The condition~\eqref{hess} is intrinsic in light of the definition of Hessian and \emph{Theorema Egregium}. 

Our main result in this direction is Theorem~\ref{thm: C2,1 isom emb}, reproduced here:

\begin{theorem*}
Let $\Sigma$ be a closed smooth surface diffeomorphic to $\stwo$ with a $C^\infty$-metric $g$ whose Gaussian curvature $K_g\geq 0$. Assume that the degeneracy set $\{K_g=0\}$ is finite, on which the Hessian of $K_g$ is positive definite. Then $g$ admits a global $C^{2,1}$-isometric embedding into $\R^3$. 
\end{theorem*}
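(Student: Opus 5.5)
We begin from the global $C^{1,1}$-isometric embedding $\XX:(\Sigma,g)\to\R^3$ provided by Guan--Li~\cite{GL} and Hong--Zuily~\cite{hz}; by the Cohn-Vossen/Pogorelov rigidity it is unique up to rigid motions, and its image is a convex surface. Away from the finite set $\{P_1,\ldots,P_N\}$ we have $K_g>0$, so the classical nondegenerate theory (Nirenberg, Heinz) applies locally to the Darboux equation and gives $\XX\in C^\infty(\Sigma\setminus\{P_j\})$. Regularity is therefore a purely local question at each $P_j$. Near $P_j$ we rotate so that the tangent plane of the convex surface at $\XX(P_j)$ is horizontal and write $\XX$ as a graph $(x,y,u(x,y))$ with $u(0)=|\na u(0)|=0$ and $u$ convex. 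Then $u$ solves
\begin{equation*}
\det D^2u = K_g\,(1+|\na u|^2)^2 =: F(x,\na u),
\end{equation*}
and the hypothesis~\eqref{hess} becomes $F(x,p)\simeq |x|^2$ near $0$, uniformly in $p$, with $F^{1/2}$ smooth up to a nondegenerate quadratic form. This is exactly the case where the right-hand side vanishes to order two at an isolated point, which Guan--Sawyer~\cite{GS} and Daskalopoulos--Savin~\cite{DS} treat.

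The next step fixes the leading-order behaviour. Following Daskalopoulos--Savin, for $\det D^2u = |x|^{2}\cdot(\text{positive smooth})$ the solution behaves like a homogeneous-of-degree-$\frac{2+2}{2}+\ldots$ profile; more precisely, the natural scaling $u_r(x):=r^{-3}u(rx)$ preserves the equation with right-hand side $|x|^2 F_0$ in the limit, because $\det D^2 u_r(x)=r^{-2}F(rx,\cdot)\to Q(x)$, where $Q$ is the Hessian quadratic form of $K_g$ at $P_j$ (after a linear change making $Q=|x|^2$). I will use \cite{DS} to show that the blow-up limits $u_0=\lim u_{r_k}$ are radial cubic solutions $u_0=c|x|^3$ of $\det D^2 u_0=|x|^2$ (up to affine normalisation), and that this limit is unique. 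The main point is to rule out the degenerate limits in which $u$ grows like $|x|^{2}$ in one direction and faster in another; here the isolated zero with nondegenerate Hessian and the global convexity (together with $|\na u|$ small) should give a two-sided cubic growth $c|x|^3\le u\le C|x|^3$, which is the content of the Daskalopoulos--Savin classification.

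With cubic growth in hand, I apply Caffarelli's interior theory~\cite{Caffarelli90,Caffarelli91} on dyadic annuli. On $A=\{1/2<|x|<2\}$ the rescaled functions $u_r$ solve a uniformly nondegenerate Monge--Amp\`ere equation $\det D^2 u_r = r^{-2}F(rx,r^2\na u_r)$, with right-hand side bounded above and below independently of $r$ and uniformly $C^\alpha$ (indeed smooth). Since $u_r$ is close to $c|x|^3$, whose sections in $A$ are strictly convex with bounded eccentricity, Caffarelli's strict convexity and $C^{2,\alpha}$ estimates give $\|D^2u_r\|_{C^{0,1}(A')}\le C$ uniformly in $r$. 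Scaling back, $|D^2u(x)|\le C|x|$ and $|D^3u(x)|\le C$ for $0<|x|<r_0$, which implies $u\in C^{2,1}(B_{r_0})$, with $D^2u(0)=0$. Patching with the smooth regularity elsewhere gives $\XX\in C^{2,1}(\Sigma)$; since the convex surface has positive curvature except at finitely many points, $\XX$ is an embedding.

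The main obstacle I expect is the second step: proving that the tangent cone at $P_j$ is the radial cubic, i.e.\ excluding anisotropic blow-ups with only $C^{1,1}$ information. I would first try to prove the two-sided cubic bound via comparison with barriers $c_\pm|x|^3+\ell(x)$ built from the radial solution, using the Aleksandrov maximum principle on sections of $u$ and the nondegeneracy of $\operatorname{Hess}K_g(P_j)$; if that fails, I would invoke the characterisation in \cite{DS} of solutions with right-hand side $|x|^2$ homogeneous data, and combine it with the Guan--Sawyer $C^{1,1}$ bounds to get compactness of $\{u_r\}$.
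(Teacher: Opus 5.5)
Your second step has a genuine gap. The Daskalopoulos--Savin classification for $\det D^2v=|z|^2\widetilde a(z)$ with $\widetilde a>0$ H\"older is a dichotomy, not a cubic-growth theorem. Besides the balanced branch $c|z|^3\le v\le C|z|^3$, it contains a nonradial branch
$v=\frac{a_0}{12}z_1^4+\frac{1}{2a_0}z_2^2+\bigo\bigl((z_1^4+z_2^2)^{1+\delta}\bigr)$,
and nothing in your hypotheses excludes it. For instance, take the Cauchy--Kovalevskaya solution of $v_{11}=(|z|^2+v_{12}^2)/v_{22}$ with data $v(0,z_2)=\frac12 z_2^2$, $v_1(0,z_2)=0$. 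It is an analytic convex local solution of $\det D^2v=|z|^2$ with $D^2v(0)=\mathrm{diag}(0,1)$. The induced metric on its graph is analytic, and its curvature has an isolated zero with positive definite Hessian. In this branch the upper bound $v\le C|z|^3$ fails, the blow-ups $r^{-3}v(rz)$ diverge, and your conclusions $|D^2u(x)|\le C|x|$ and $D^2u(0)=0$ are false. So no barrier argument can give the two-sided cubic bound, and convexity does not help, since it is already built into the local problem.

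The missing idea is to handle this branch rather than exclude it; this is what the paper's dichotomy theorem does. In the nonradial case Daskalopoulos--Savin also give $v\in C^{2,\delta}$, and $D^2v(0)$ is a nonzero positive semidefinite rank-one matrix. Hence $H(P)=\frac12\operatorname{tr}D^2v(0)>0$, and $H\ge c_0>0$ near $P$. Since $\operatorname{Hess}_gK_g(P)>0$ gives $K_g\approx d_g(\cdot,P)^2$ (finite type with $m=1$), Guan--Sawyer's theorem yields smoothness of $\XX$ near $P$. Note that the essential hypothesis of that theorem is exactly this lower bound on $H$; you use it only as a source of $C^{1,1}$ compactness.

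Your cubic blow-up argument is then needed only in the balanced branch, where it matches the paper's, with one correction. Caffarelli's theory gives uniform $C^{2,\alpha}$ bounds on annuli, not $C^{2,1}$, so you must still differentiate the equation and apply Schauder estimates to $\p_k u_r$. That requires the rescaled right-hand side to be uniformly $C^{1,\alpha}$. In graph coordinates $K_g$ is composed with the merely $C^{1,1}$ chart $q$, so it is not smooth, though it is uniformly $C^{1,1}$ after rescaling, which suffices. The paper instead passes to intrinsic normal coordinates and runs Evans--Krylov and Schauder on the rescaled Darboux equation. Finally, the resulting Lipschitz bound on $A$ transfers to $\XX$ through $\p_{ij}\XX=\G^k_{ij}\p_k\XX+A_{ij}\nu$.
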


The proof of the Theorem~\ref{thm: C2,1 isom emb} is based on the following complete classification of the local behaviour near each $P_j$, which holds for $g \in C^{4,\alpha}$ for any $\alpha>0$.

\begin{theorem}[Dichotomy]
\label{thm: dichotomy}
Let $(\Sigma,g)$ be a closed smooth surface diffeomorphic to $\stwo$ with $g\in C^{4,\alpha}$ for some $\alpha \in ]0,1[$ and $K_g \geq 0$. Let $P\in\Sigma$ be an isolated zero of $K_g$ satisfying  
\[
K_g(P)=0,
\qquad
\nabla K_g(P)=0,
\qquad
\operatorname{Hess}_gK_g(P)>0.
\]
In addition, let $\XX:(\Sigma,g)\to \mathbb R^3$ be a locally convex $C^{1,1}$-isometric immersion (whose existence is ensured by \cite{GL}). Then there exists a neighbourhood $U \subset \Sigma$ of $P$ such that one of the
following two alternatives holds:

\begin{enumerate}
\item[\rm (i)]
There exists $c_0>0$ such that the mean curvature of $\XX$ satisfies
\[
H\geq c_0
\qquad\text{a.e. in }U.
\]

\item[\rm (ii)]
Denote
\[
u(Q):=\left\langle \nu(P),\XX(Q)-\XX(P)\right\rangle,
\]
where $\nu$ is the outward unit normal to $\XX$. There exists a uniform constant $C$  such that
\[
\left|\nabla_g^2u(Q)\right|
   \leq C\,d_g(Q,P)
\quad\text{and}\quad
\left\|\nabla_g^2u\right\|_{\rm C^{0,1}(U)}
   \leq C\qquad\text{for each $Q \in U$}.
\]
\end{enumerate}
\end{theorem}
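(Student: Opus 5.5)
We propose to reduce the dichotomy to a local degenerate Monge--Amp\`{e}re equation and then split according to whether the convex height function is strictly convex at $P$. Near $P$, since $\XX$ is locally convex and $C^{1,1}$, we write $\XX(\Sigma)$ locally as the graph of $u$ over the tangent plane $T_{\XX(P)}\XX(\Sigma)$, with $u(P)=0$, $\nabla u(P)=0$ and $u$ convex. The Gauss equation gives
\[
\det D^2u=K_g\,(1+|Du|^2)^2\,\det(g_{ij})^{-1}\cdot(\text{smooth factors})=:F(x,u,Du),
\]
understood in the Aleksandrov sense (we use graph coordinates pulled back through the $C^{1,1}$ chart). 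The assumption $\operatorname{Hess}_gK_g(P)>0$ with $K_g\in C^{2,\alpha}$ yields $F\simeq |x|^2$ near $0$, i.e.\ $c|x|^2\le F\le C|x|^2$, and $F^{1/2}$ behaves like a $C^{1,\alpha}$ function vanishing nondegenerately. This is precisely the regime treated in Daskalopoulos--Savin~\cite{DS} (right-hand side $\sim|x|^\alpha$ with $\alpha=2$) and Guan--Sawyer~\cite{GS}.

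Next we split according to the behaviour of the sections of $u$ at the origin. Following \cite{DS}, either (a) $u$ is strictly convex at $0$ in the sense that its sections $S_h=\{u<h\}$ are comparable to balls up to bounded eccentricity, or (b) the sections degenerate. In case (b), the classification in \cite{DS} forces the profile $u\sim |x|^{2}$ with $D^2u(0)$ having a positive eigenvalue, so the Hessian is bounded below in one direction. Hence $\operatorname{tr}D^2u\ge c_0>0$ in a neighbourhood, which is alternative (i) after converting to mean curvature (using $|Du|$ small). Even in case (a), if $D^2u(0)\neq0$ (in the a.e./Aleksandrov sense of the blow-up) we land in (i) by the same argument. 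Thus the remaining case is $D^2u$ vanishing at $0$, with $u$ of cubic growth: $c|x|^3\le u\le C|x|^3$, which is the homogeneity dictated by $\det D^2u\sim|x|^2$.

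In this remaining case we perform the cubic blow-up $u_r(x):=r^{-3}u(rx)$. Then $\det D^2u_r(x)=r^{-2}F(rx)\to Q(x)$, where $Q$ is the quadratic form $\tfrac12\operatorname{Hess}F(0)$ (positive definite). The family $\{u_r\}$ is uniformly bounded and convex on $B_2$, and the sections are balanced. On the annulus $B_2\setminus B_{1/2}$ the right-hand side is bounded between positive constants. Caffarelli's theory~\cite{Caffarelli90,Caffarelli91} then gives uniform strict convexity and $C^{2,\beta}$ estimates for $u_r$ on $B_{3/2}\setminus B_{3/4}$, since the right-hand side is $C^{\alpha}$ uniformly in $r$ after rescaling (this is where $g\in C^{4,\alpha}$ enters). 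Scaling back yields $|D^2u(x)|\le C|x|$ and $|D^3u(x)|\le C$ on dyadic annuli. Summing over annuli gives $\|D^2u\|_{C^{0,1}(U)}\le C$, using that $D^2u$ is continuous with $D^2u(0)=0$ to glue the annuli. Converting from graph coordinates to $\nabla_g^2$ of $u(Q)=\langle\nu(P),\XX(Q)-\XX(P)\rangle$ only adds lower-order smooth terms, since $\nabla_g^2 u=\langle \nu(P),\two\,\nu\rangle$-type expressions plus Christoffel terms times $\nabla u=O(|x|^2)$. This gives (ii).

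The main obstacle is the dichotomy step: showing that outside the case $D^2u(0)\ne0$ one genuinely gets cubic growth $u\simeq|x|^3$ with comparable (balanced) sections, uniformly, so that Caffarelli's interior theory applies to every rescaling. We would try to take this from the radial/model classification in \cite{DS}, where for $\det D^2u\sim|x|^\alpha$ solutions are shown to be either nondegenerate in one direction or to have homogeneous blow-up of degree $2+\alpha/2$. We would then verify that the perturbation $F-Q=O(|x|^{2+\alpha})$ does not affect the blow-up limit, via compactness of normalised solutions and uniqueness of the homogeneous limit.
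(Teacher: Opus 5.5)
Your plan matches the paper's up to the blow-up. You reduce to a convex graph, use the Daskalopoulos--Savin dichotomy for $\det D^2v=|z|^2\widetilde a$ with $\widetilde a\in C^{0,\alpha}$ (writing $v$ for the graph function, your $u$), and in the balanced branch apply the cubic rescaling and Caffarelli's theory on a fixed annulus. That gives $cI\le D^2v_r\le CI$ and $\|v_r\|_{C^{2,\beta}}\le C$ uniformly in $r$. In the nonradial branch the profile is $\frac{a_0}{12}z_1^4+\frac{1}{2a_0}z_2^2$, not $|x|^2$. What you actually use there, $v\in C^{2,\delta}$ with $D^2v(0)\neq0$ positive semidefinite, is correct and gives (i).

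\textbf{The gap.} It lies in the sentence ``scaling back yields $|D^3u(x)|\le C$''. Since $D^3v(z)=(D^3v_r)(z/r)$, this needs uniform $C^{2,1}$ or $C^3$ bounds for $v_r$ on the annulus. Caffarelli's theory with a merely H\"older right-hand side does not give these. Rescaling the $C^{2,\beta}$ bounds yields only $|D^2v(z)|\le C|z|$ and $[D^2v]_{C^{0,\beta}(r\mathcal A)}\le Cr^{1-\beta}$. That is $C^{2,\beta}$-type regularity, not the Lipschitz bound on the Hessian, which is the entire content of alternative (ii).

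\textbf{What is missing.} You need a third-order estimate on the annuli, uniform in $r$. It comes from differentiating the rescaled equation and applying Schauder theory to the first derivatives, with uniform ellipticity supplied by the convexity step.

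The paper does this in Steps~4--5, working in intrinsic normal coordinates:
\begin{enumerate}
\item It transfers the uniform convexity to $u_r$, using that the rescaled chart change satisfies $D^2\Phi_r=\bigo(r)$.
\item It applies Evans--Krylov to the rescaled Darboux equation $\det\bigl(D^2u_r-r\Gamma(ry)Du_r\bigr)=r^{-2}K_g(ry)\det g(ry)\bigl(1-r^4g^{ij}(ry)(u_r)_i(u_r)_j\bigr)$, whose data are explicit functions of $y$ and $Du_r$.
\item It runs Schauder on the linear equation for $\partial_ku_r$, whose source contains $r^{-1}(\partial_kK_g)(ry)$. This yields $\|u_r\|_{C^{3,\beta}}\le C$, and it is where $K_g\in C^{2,\alpha}$, i.e.\ $g\in C^{4,\alpha}$, is genuinely needed.
\end{enumerate}

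If you stay in graph coordinates, you must instead show that $f_r=r^{-2}F(r\,\cdot)$ is uniformly $C^{1,\beta}$ on the annulus. Your ``smooth factors'' are not smooth: $F=K_g(q(z))\,(1+|Dv|^2)^2$ involves the chart change $q$, which is only $C^{1,1}$ a priori. The check can be made to work, using $D(K_g\circ q)=(DK_g)(q)\,Dq$ with $|DK_g(q(z))|\lesssim|z|$, $D^2q\in L^\infty$, and the $C^{2,\beta}$ bound on $v_r$ already obtained. Without it and the ensuing Schauder step, your argument stops at $C^{2,\beta}$.
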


In the case of the first alternative, we shall invoke the following result by Guan and Sawyer:
\begin{theorem}[Theorem~2 in \cite{GS}]\label{thm: Guan--Sawyer}
Suppose \((\Sigma, g)\) is a smooth compact oriented surface with Gaussian curvature \(K_g \geq 0\). If \(K_g\) satisfies the finite type condition ($\clubsuit$): 
\begin{quote}
There exist \(m \in \mathbb{N}\) and \(C > 0\) such that
\[
K_g(x, r, \mathbf{p}) \approx |x|^{2m}
\quad \text{for } (x, r, \mathbf{p}) \text{ in compact subsets of }
\Omega \times \mathbb{R} \times \mathbb{R}^4\quad \tag{$\clubsuit$}
\]
\end{quote}
near every vanishing point and if the \(C^{1,1}\)-isometric embedding \(\XX : \Sigma \to \mathbb{R}^3\) has mean curvature \(H\) bounded below by a positive constant \(c\) almost everywhere, then the embedding \(\XX\) is in fact smooth everywhere.
\end{theorem}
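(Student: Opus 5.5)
The plan is to reduce the statement to a local regularity problem for a degenerate Monge--Amp\`{e}re equation near each vanishing point of $K_g$. Away from $\{K_g=0\}$ the embedding is smooth by the classical nondegenerate theory (Nirenberg, Heinz), so it suffices to work near a zero $P$ of $K_g$.

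Near $P$, rotate coordinates in $\R^3$ so that the tangent plane of $\XX$ at $P$ is horizontal. Since $\XX\in C^{1,1}$ is locally convex, it is then a graph $\XX=(x,y,u(x,y))$ of a convex function with $u(0)=0$ and $Du(0)=0$. Expressing the metric in these coordinates (the Darboux/Weyl reduction), $u$ solves
\begin{equation*}
\det D^2u = K\bigl(x,u,Du\bigr)\,\bigl(1+|Du|^2\bigr)^{2},
\end{equation*}
where $K(x,r,\mathbf{p})$ is the Gaussian curvature of $g$ transported to the graph coordinates. The hypothesis ($\clubsuit$) says that this right-hand side vanishes exactly to order $2m$ at the origin, uniformly in the lower-order arguments.

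The key use of $H\geq c>0$ is that $\Delta u\geq c'>0$ a.e. Since $\det D^2u\to0$ at the origin while the trace stays bounded below, one eigenvalue of $D^2u$ is bounded below and the other degenerates. Consequently the linearised operator $L=U^{ij}\partial_{ij}$, with $U^{ij}$ the cofactor matrix of $D^2u$, is uniformly elliptic in one direction and degenerate only in the complementary direction, like $|x|^{2m}$.

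I would proceed in three steps.
\begin{enumerate}
\item Mollify $K$ to $K+\e$, solve the corresponding nondegenerate Dirichlet problems on a small ball with boundary data $u$, and obtain uniform $C^{1,1}$ bounds together with a uniform lower bound on $\Delta u_\e$. The lower bound should come from a maximum-principle argument on the larger eigenvalue, using $H\geq c$ on the boundary and the structure of the equation.
\item Prove uniform H\"older estimates for $D^2u_\e$. Differentiating the equation gives $L(\partial_k u_\e)=\partial_k(\text{RHS})$. For the operator $L$, partially elliptic with a finite-type degeneracy, I would establish subelliptic (Hörmander/Fefferman--Phong type) estimates: the commutator of the good direction with the degenerate one, weighted by $|x|^{m}$, generates the missing direction after at most $m$ brackets. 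This is where the finite-type condition enters essentially, and it yields a gain $\|w\|_{H^{s+\delta}}\lesssim\|Lw\|_{H^s}+\|w\|_{L^2}$.
\item Bootstrap. Once $D^2u_\e$ is bounded in $C^\gamma$ uniformly, the coefficients of $L$ become H\"older continuous. Iterating the subelliptic estimate on higher derivatives, passing to the limit $\e\to0$, and invoking uniqueness of convex solutions (or rigidity of the isometric embedding) identifies the limit with $u$ and gives $u\in C^\infty$.
\end{enumerate}

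The main obstacle is step 2: obtaining the first H\"older modulus for $D^2u$ when the coefficients are only $L^\infty$. At that stage standard Hörmander theory for smooth vector fields does not apply directly. I would first try to exploit the orientation of the good eigendirection: the bound $\Delta u\geq c$ makes it vary in a controlled way, so one can straighten coordinates so that $u_{11}\geq c$. One can then treat the equation as a one-dimensional uniformly convex problem in $x_1$, parametrised by $x_2$, with the degenerate term $u_{22}-u_{12}^2/u_{11}=K(1+|Du|^2)^2/u_{11}\approx|x|^{2m}$ prescribed. Partial Legendre transform in $x_1$ would convert this into a quasilinear degenerate elliptic equation of Grushin type $w_{22}+|x|^{2m}w_{11}$-like, for which Guan's estimates for such degenerate equations give the needed H\"older and then smooth regularity.
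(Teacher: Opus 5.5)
The paper contains no proof of this statement. It imports it from Guan--Sawyer as a black box; the Remark after the proof of Theorem~\ref{thm: C2,1 isom emb} only notes that their argument uses Calabi-type third-derivative computations in the spirit of Rios--Sawyer--Wheeden. Your proposal therefore has to stand on its own. Its architecture is the natural one: reduce to a degenerate two-dimensional Monge--Amp\`ere equation, use $H\ge c$ to get ellipticity in one direction, and pass by a partial Legendre transform to a Grushin-type quasilinear equation.

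\textbf{The main gap.} The step everything hinges on is asserted, not proved: that ``$\Delta u\ge c$ makes [the good eigendirection] vary in a controlled way, so one can straighten coordinates so that $u_{11}\ge c$.''

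Under ($\clubsuit$) the zeros of the curvature are isolated. So $u$ is already smooth on a punctured neighbourhood of $P$, and the whole theorem comes down to whether the eigenvector of the large eigenvalue of $D^2u$ has a limit at $P$.

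The bound $\Delta u\ge c$ controls only the size of $\lambda_{\max}$, not the continuity of its eigenvector. The rescalings $u(ry)/r^2$ subconverge to entire convex $C^{1,1}$ solutions of $\det D^2w=0$ with $\Delta w\ge c$. These are one-dimensional profiles $\phi(y\cdot e)$ modulo affine functions. Nothing in your argument prevents $e$ from depending on the subsequence, i.e.\ a slow rotation of the degenerate direction as $r\to0$.

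Ruling this out is exactly the passage from $C^{1,1}$ to continuity (indeed H\"older continuity) of $D^2u$ at $P$. It must be extracted from the equation together with the finite-type rate, and it is the real content of the theorem. Without it there is no fixed direction with $u_{11}\ge c$ on a neighbourhood, so the partial Legendre transform does not produce the Grushin-type equation and your Step~2 has no starting point. With it, the rest is essentially the known theory you cite.

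For comparison, in the isotropic case $m=1$ the paper obtains precisely this information from the Daskalopoulos--Savin classification (Step~2 of the proof of Theorem~\ref{thm: dichotomy}): $v\in C^{2,\delta}$, with $D^2v(0)$ of rank one once $H\ge c$. That result needs the right-hand side to be $|x|^{\gamma}$ times a positive H\"older function, which ($\clubsuit$) does not give for general $m$.

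\textbf{Two further points need repair.}

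First, the maximum-principle argument in Step~1 does not give a uniform lower bound on $\Delta u_\varepsilon$.
\begin{itemize}
\item By concavity of $\log\det$, pure second derivatives are (up to the source term) \emph{sub}solutions of the linearised operator, so the maximum principle yields only upper bounds.
\item The boundary second derivatives of $u_\varepsilon$ are not those of $u$.
\item Boundary data force no interior Hessian lower bound for degenerate equations: the convex solution of $\det D^2v=0$ in $B_1$ with $v=1=|x|^2$ on $\partial B_1$ is $v\equiv1$.
\item The approximation is unnecessary anyway, since $u$ is smooth off $P$.
\item Identifying the limit with $u$ would need local uniqueness for a Dirichlet problem with $(u,Du)$-dependent right-hand side. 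Rigidity of closed convex surfaces does not supply this.
\end{itemize}

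Second, in tangent-plane coordinates the right-hand side is $K_g(q(x))\,(1+|Du|^2)^2$, where $q$ is the inverse of the projection. This $q$ is only $C^{1,1}$ and is not a function of $(x,u,Du)$. So differentiating the right-hand side and bootstrapping as in Steps~2--3 is not justified in that form. Use instead the intrinsic Darboux equation $\det(u_{ij}-\Gamma^k_{ij}u_k)=K_g\det g\,(1-|\nabla_g u|_g^2)$ for the height function. Its right-hand side is a genuinely smooth function of $(x,u,Du)$, and this is the form to which ($\clubsuit$) refers.
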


Given an isometric immersion $\XX \in C^{1,1}(\Sigma)$, we denote by $A$ the second fundamental form associated with $\XX$, which is well defined in the \emph{a.e.} sense and is essentially bounded. 
\begin{equation}\label{A, def}
    A_{ij} := \left\langle \na_i \na_j \XX,\nu \right\rangle.
\end{equation}
We choose the sign convention for $\nu$ to ensure that $A$ is nonnegative.

Our main Theorem~\ref{thm: C2,1 isom emb} follows readily from the dichotomy Theorem~\ref{thm: dichotomy}.

\begin{proof}[Proof of Theorem~\ref{thm: C2,1 isom emb}]

Away from the finite degenerate set $\{K_g=0\}$, the classical strictly
elliptic regularity theory for the Weyl problem applies, so that $\XX$ is $C^\infty$ thereon. See, \emph{e.g.}, Nirenberg~\cite{N53}.

At each degenerate point $P_j$ let us apply the dichotomy Theorem~\ref{thm: dichotomy}. 
\begin{itemize}
    \item 
In the latter alternative, the second fundamental form $A$ associated with $\XX$ is Lipschitz. To see this, observe the identity (see Equation~\eqref{eq:A-from-u} in the proof of Theorem~\ref{thm: dichotomy} below): $$A =
\frac{\nabla_g^2u}
 {\sqrt{1-|\nabla_gu|_g^2}}.$$ After shrinking the neighbourhood of the degenerate point if necessary, we have $1-|\nabla_gu|_g^2\geq\frac12$. It then follows from the $C^{2,1}$-bound for $u$ that 
\begin{equation*}
|A(Q)|\leq C\,d_g(Q,P_j),
\qquad
|\nabla_gA(Q)|\leq C
\end{equation*}
for \emph{a.e.} $Q$ near $P_j$.

Once we obtain that $A$ is Lipschitz, recall the Gauss--Weingarten equation:
\begin{equation*}
\partial_{ij}\XX
 =
\Gamma_{ij}^{k}\partial_k\XX+A_{ij}\nu.
\label{eq:Gauss-formula}
\end{equation*}
Since $\XX \in C^{1,1}$, we have $\p_k\XX$ and $\nu \in C^{0,1}$. Also, $\G^k_{ij} \in C^\infty$ and $A_{ij} \in C^{0,1}$. Thus, $\p_{ij}\XX \in C^{0,1}$ and hence $\XX \in C^{2,1}$ locally near $P_j$.

    \item 
     In the former alternative, the mean curvature $H$ of $\XX$ satisfies $H\geq c_j>0$ near $P_j$, while the zero Morse index condition implies that
\begin{align*}
K_g(Q) \approx d_g(Q,P_j)^2  \qquad \text{near $P_j$}.
\end{align*}
It then follows from Guan--Sawyer's Theorem~\ref{thm: Guan--Sawyer} that $\XX \in C^\infty$ near $P_j$.
\end{itemize}

The assertion now follows from a finite patching argument.   \end{proof}


\begin{remark}
    The dichotomy Theorem~\ref{thm: dichotomy} is proved for $C^{4,\alpha}$-metrics. In the case of the second alternative, we clearly have the $C^{2,1}$-regularity of $\XX$. However, in the first alternative with the lower bound $H\geq c>0$, we are currently uncertain whether the arguments in Guan--Sawyer~\cite{GS} would yield $\XX \in C^{2,1}$ for $g \in C^{4,\alpha}$. This is because~\cite{GS} crucially relies on Calabi-type calculations as in Rios--Sawyer--Wheeden~\cite{rsw}, which involves taking third-order derivatives of $K_g$.  
\end{remark}

\begin{proof}[Proof of the dichotomy Theorem~\ref{thm: dichotomy}]
We divide the argument into six steps below.

\medskip
\noindent
\textbf{Step 1.} We first reduce our problem to the analysis of a degenerate Monge--Amp\`{e}re equation investigated in Daskalopoulos--Savin~\cite{DS}. Modulo some Euclidean rigid motion in $\mathbb R^3$, the image of $\XX$ near $\XX(P)$
can be represented as a convex graph over its tangent plane:
\[
\XX(Q)=\bigl(z(Q),v(z(Q))\bigr)
\]
where $v(0)=0$ and $
\na v(0)=0$. Since $\XX\in C^{1,1}$ and the differential of the 
projection on tangent plane is nonsingular at $P$, the maps $Q \mapsto z(Q)$ and $z \mapsto q(z):=Q$ are locally $C^{1,1}$ and, in particular, biLipschitz.

The PDE for the Gaussian curvature of a graph reads
\begin{equation}
\det D^2v(z)
 =
K_g(q(z))\bigl(1+|\na v(z)|^2\bigr)^2,
\label{eq:graph-MA}
\end{equation}
which shall be understood in the Aleksandrov
sense. Denote
\[
L=D q(0).
\]
As $q\in C^{1,1}$ at least locally, we have
\begin{equation}
q(z)=L z+R(z),
\qquad
|R(z)|\leq C|z|^2,
\qquad
|\na R(z)|\leq C|z|
\quad\text{a.e.}
\label{eq:q-expansion}
\end{equation}
On the other hand, since the metric $g\in C^{4,\alpha}$, we have $K_g\in C^{2,\alpha}$. Also, by assumption
\[
K_g(P)=0,
\qquad
\na K_g(P)=0.
\]
An application of the Taylor expansion then yields that
\begin{equation}
K_g(Q)
 =
\frac12\operatorname{Hess}_gK_g(P)\left[\exp^{-1}_P(Q),\exp^{-1}_P(Q)\right]
 +E(Q).
\label{eq:K-Taylor}
\end{equation}
The remainder term $E$ is estimated by
\begin{align*}
    |E(Q)|\leq C|Q|^{2+\alpha},
\qquad
|\na E(Q)|\leq C|Q|^{1+\alpha}.
\end{align*}
 
Define 
\[
Q_0(z)
:=
\frac12\operatorname{Hess}_gK_g(P)[L z,L z].
\]
In view of the zero Morse index assumption on the Hessian of $K_g$, the quadratic form $Q_0$ is positive definite. It follows from \eqref{eq:q-expansion} and \eqref{eq:K-Taylor} that
\begin{equation*}
K_g(q(z))=Q_0(z)+E_0(z)
\label{eq:Khat-expansion}
\end{equation*}
with
\begin{equation*}
|E_0(z)|\leq C|z|^{2+\alpha},
\qquad
|\na E_0(z)|\leq C|z|^{1+\alpha}
\quad\text{a.e.}
\label{eq:F-estimates}
\end{equation*}
As $Q_0(z)\approx |z|^2$, the function
\[
b(z):=
\begin{cases}
E_0(z)/Q_0(z)\qquad\text{ for } z\neq 0,\\
0 \qquad\text{ for } z=0
\end{cases}
\]
belongs to $C^{0,\alpha}$. Indeed, observe that $|b(z)|\leq C|z|^\alpha$ and $|\na b(z)|\leq C|z|^{\alpha-1}$ \emph{a.e.}. By separately considering the cases   $|z-w| \approx \max\{|z|,|w|\}$  and $|z-w| \ll \max\{|z|,|w|\}$, we deduce that $|b(z)-b(w)|\leq C|z-w|^\alpha$.

Now, we \emph{claim} that $v$ satisfies a PDE of the form
\begin{equation}
\det D^2v(z)=|z|^2\widetilde a(z),
\label{eq:DS-form}
\end{equation}
where
\begin{align*}
\widetilde a\in C^{0,\alpha}\quad\text{and} \quad
\widetilde a>0.
\end{align*}
Indeed, as $v\in C^{1,1}$ and $\na v(0)=0$, the function $(1+|\na v|^2)^2$
is locally Lipschitz. Recall $Q_0(z)=
\frac12\operatorname{Hess}_gK_g(P)[L z,L z]$. We  then recast the Monge--Amp\`{e}re equation~\eqref{eq:graph-MA} into
\begin{equation*}
\det D^2v(z)=Q_0(z)a(z)
\end{equation*}
with a strictly positive function $a\in C^{0,\alpha}$. Applying to this PDE a fixed nonsingular linear change of variables and multiplication by a positive constant, we arrive at \eqref{eq:DS-form}.

\smallskip
\noindent
\textbf{Step 2.}  Invoking the classification results in Daskalopoulos--Savin~\cite[Theorems~ 1.1 and 1.2; Remark~1.6]{DS}
for the PDE
\[
\det D^2v=|z|^\gamma \widetilde{a}(z)
\]
with $\widetilde{a}\in C^{0,\alpha}$, $\widetilde{a}>0$, and $\gamma=2$, we conclude that $v\in C^{2,\delta}$ holds for some $\delta>0$. Furthermore, precisely one of the following
two cases occurs:
\begin{enumerate}
    \item 
Either the solution $v$ exhibits a balanced cubic behaviour:
\begin{equation}
c|z|^3\leq v(z)\leq C|z|^3;
\label{eq:cubic-growth}
\end{equation}
\item 
or  the solution $v$ exhibits a nonradial behaviour: \begin{equation}
v(z)
 =
\frac{a_0}{12}z_1^4
 +
\frac{1}{2a_0}z_2^2
 +
\bigo\left(
       \bigl(z_1^4+z_2^2\bigr)^{1+\delta}
  \right)
\label{eq:parabolic-expansion}
\end{equation}
in some coordinate system for some $a_0>0$ and $\delta>0$.
\end{enumerate}

In the nonradial case,
the asymptotic expansion~\eqref{eq:parabolic-expansion} gives us 
\[
D^2v(0)
 =
\begin{pmatrix}
0&0\\
0&a_0^{-1}
\end{pmatrix}.
\]
By continuity of $D^2v$ (since $v \in C^{2,\delta}$), it holds that 
\[
\Delta v\geq c>0
\]
in a neighbourhood of the origin.

For the graph $\XX(Q)=\bigl(z(Q),v(z(Q))\bigr)$, we have the well-known identity
\[
H
 =
\frac{
 (1+v_2^2)v_{11}
 -2v_1v_2v_{12}
 +(1+v_1^2)v_{22}
}{
 2(1+|\na v|^2)^{3/2}
}
\]
in the $z$-coordinates. Here and hereafter, we adopt the convention that $H$ is one half of the trace
of the shape operator.  After undoing the fixed nonsingular linear normalisation used to
reduce $Q_0$ to $|z|^2$, we conclude that, in the original
orthonormal tangent plane coordinates, $D^2v(0)$ is a nonzero
positive semidefinite matrix of rank one. Consequently,
\[
\operatorname{tr}D^2v(0)>0.
\]
Since $\nabla v(0)=0$, the graph mean-curvature formula gives
\[
H(0)=\frac12\operatorname{tr}D^2v(0)>0.
\]
By the continuity of $D^2v$, after shrinking the neighbourhood,
\[
H\ge c_0>0.
\]
This proves the alternative \textup{(i)}.

It remains to analyse the balanced cubic behaviour in \eqref{eq:cubic-growth}. 

\smallskip
\noindent
\textbf{Step 3.} Let $\nu$ be the outward unit normal vector field along $\XX$. By assumption, $\nu$ is Lipschitz. Denote the vector
\begin{align*}
    {\bf e}:= \nu(P),
\end{align*}
and define for $Q \in \Sigma$ the scalar function 
\begin{align*}
    u(Q)=\left\langle {\bf e},\XX(Q)-\XX(P)\right\rangle.
\end{align*}
The standard height identities yield that
\begin{equation}
\nabla_g^2u=\langle {\bf e},\nu\rangle A,
\label{eq:height-Hessian}
\end{equation}
where $A$ is the second fundamental form associated with $\XX$, as well as
\begin{equation*}
\langle {\bf e},\nu\rangle^2
 =
1-\left|\nabla_gu\right|_g^2.
\end{equation*}
We thus have 
\begin{equation}
A
 =
\frac{\nabla_g^2u}
 {\sqrt{1-|\nabla_gu|_g^2}}.
\label{eq:A-from-u}
\end{equation}
Now, taking determinants on both sides of \eqref{eq:height-Hessian} and using the Gauss equation~$\det_gA=K_g$, we obtain that
\begin{equation}
\det{}_g\nabla_g^2u
 =
K_g\left(1-|\nabla_gu|_g^2\right).
\label{eq:intrinsic-Darboux}
\end{equation}
In local coordinates this reads
\begin{equation}
\det\left(
 u_{ij}-\Gamma_{ij}^{k}u_k
\right)
 =
K_g\det(g)
\left(
 1-g^{ij}u_i u_j
\right),
\label{eq:coordinate-Darboux}
\end{equation}
where $\{g^{ij}\}=g^{-1}$. This is the Darboux equation.

To proceed, we choose the geodesic normal coordinate system $x=(x^1,x^2)$ centred at $P$. Recall that \[
K_g(P)=0,
\qquad
\nabla_gK_g(P)=0,
\]
whence in dimension two the full Riemann curvature tensor is determined by $K_g$. We thus have the asymptotic expansions
\begin{equation}
g_{ij}(x)=\delta_{ij}+\bigo(|x|^4),
\qquad
\Gamma_{ij}^{k}(x)=\bigo(|x|^3).
\label{eq:flatness}
\end{equation}
More precisely, these expansions mean that
\[
\left|D^m(g_{ij}-\delta_{ij})(x)\right|
\le C|x|^{4-m},
\qquad 0\le m\le4,
\]
and
\[
\left|D^m\Gamma_{ij}^{k}(x)\right|
\le C|x|^{3-m},
\qquad 0\le m\le3,
\]
with the corresponding H\"older estimates at the top order.

Recall that graphing function $v$ is exactly the function $u$ expressed
in the tangent plane coordinates $z$; also, both $z$ and the normal coordinates $x$
are biLipschitz and have nonsingular linear part at the origin. Thus, we may recast the balanced cubic branch~\eqref{eq:cubic-growth} in Step~2 above as
\begin{equation}
c|x|^3\leq u(x)\leq C|x|^3,
\label{eq:u-cubic-growth}
\end{equation}
after shrinking the coordinate neighbourhood if necessary.

\smallskip
\noindent
\textbf{Step 4.} In this step, we  first obtain uniform ellipticity
from the two-dimensional interior regularity theory for the
Monge--Amp\`ere equation in the convex tangent plane coordinates 
introduced in Step~1, and then apply the Evans--Krylov theory to the intrinsic
rescaled Darboux equation.

Recall that, in the original orthonormal tangent plane coordinates
$z$, the graphing function $v$ satisfies
\begin{equation}
\det D^2v(z)=Q_0(z)a(z),
\label{eq:graph-MA-Q0}
\end{equation}
where $Q_0$ is a fixed positive-definite quadratic form and
\begin{equation*}
0<a_*\le a(z)\le a^*,
\qquad
\|a\|_{C^{0,\alpha}(B_{\rho_0})}\le C.
\end{equation*}
In the balanced branch (recall \eqref{eq:cubic-growth}),
\begin{equation*}
c|z|^3\le v(z)\le C|z|^3.
\label{eq:v-cubic-original}
\end{equation*}

For $r>0$, define the cubic blowup:
\begin{equation}
v_r(\zeta):=\frac{v(r\zeta)}{r^3}.
\label{eq:vr-definition}
\end{equation}
Since $Q_0$ is degree-2-homogeneous, Equation~\eqref{eq:graph-MA-Q0} becomes
\begin{equation}
\det D^2v_r(\zeta)
=
Q_0(\zeta)a(r\zeta)
=:f_r(\zeta).
\label{eq:vr-MA}
\end{equation}

Denote momentarily
\[
\mathcal Z_1\Subset\mathcal Z_0
\Subset \mathbb R^2\setminus\{0\}
\]
as two fixed bounded annular domains. For sufficiently small
$r$, \eqref{eq:vr-MA} is defined on a neighbourhood of
$\overline{\mathcal Z_0}$. Since $Q_0$ is positive definite and
$\mathcal Z_0$ stays a positive distance away from the origin, we deduce from the bounds for $a(z)$ that
\begin{equation*}
0<\lambda_0\le f_r\le\Lambda_0
\qquad\text{on }\mathcal Z_0
\end{equation*}
and
\begin{equation}
\|f_r\|_{C^{0,\alpha}(\mathcal Z_0)}
\le C.
\label{eq:fr-Holder}
\end{equation}
This follows from the scaling $[a(r\,\cdot)]_{C^{0,\alpha}(\mathcal Z_0)}
=
r^\alpha
[a]_{C^{0,\alpha}(r\mathcal Z_0)}
\le C$.
Moreover, in view of the definition of $v_r$ in~\eqref{eq:vr-definition} and the cubic bounds for $v$, we have
\begin{equation*}
\|v_r\|_{L^\infty(\mathcal Z_0)}\le C.
\label{eq:vr-Linfty}
\end{equation*}
Since $v_r$ is convex, the standard interior gradient estimate for
convex functions leads to
\begin{equation*}
\|Dv_r\|_{L^\infty(\mathcal Z_1)}
\le C.
\label{eq:vr-gradient}
\end{equation*}

For our purpose, let us recall the following standard two-dimensional interior
Monge--Amp\`ere estimate: if $V$ is a convex Aleksandrov solution to 
\begin{align*}
    \det D^2V=f\qquad \text{ on } \ball_{2\rho},
\end{align*}
such that
\[
0<\lambda\le f\le\Lambda,
\qquad
\|f\|_{C^{0,\alpha}(\ball_{2\rho})}\le M,
\qquad
\operatorname{osc}_{\ball_{2\rho}}V\le M_0,
\]
then
\begin{equation*}
\|V\|_{C^{2,\alpha}(B_\rho)}
\le
C(\rho,\lambda,\Lambda,M,M_0,\alpha).
\label{eq:2D-interior-MA}
\end{equation*}
This follows from the  strict convexity and interior
regularity theory of Caffarelli~\cite{Caffarelli90, Caffarelli91}.

Covering $\overline{\mathcal Z_1}$ by finitely many balls compactly
contained in $\mathcal Z_0$, and applying the theory in the previous paragraph to the Monge--Amp\`{e}re Equation~\eqref{eq:vr-MA}, we obtain that
\begin{equation}
\|v_r\|_{C^{2,\alpha}(\mathcal Z_1)}
\le C,
\label{eq:vr-C2a}
\end{equation}
where $C$ is independent of $r$.

Since $v_r$ is convex, \eqref{eq:vr-C2a} gives
\[
0\le D^2v_r\le CI.
\]
Combining this with
\[
\det D^2v_r=f_r\ge\lambda_0
\]
yields the uniform lower bound
\begin{equation}
cI\le D^2v_r\le CI
\qquad\text{on }\mathcal Z_1.
\label{eq:vr-uniform-convexity}
\end{equation}

We next express \eqref{eq:vr-uniform-convexity} in the intrinsic 
normal coordinates. Let
\[
z=\Phi(x)
\]
be the tangent plane projection map expressed in the intrinsic
normal coordinates $x$ centred at $P$. Since the original
immersion is $C^{1,1}$ and its tangent plane projection is
nonsingular at $P$, we have that
\[
\Phi\in C^{1,1},
\qquad
\Phi(0)=0,
\qquad
D\Phi(0)\ \text{is nonsingular}.
\]
In particular,
\begin{equation}
\Phi(x)=D\Phi(0) x+\bigo(|x|^2),
\qquad
D\Phi(x)=D\Phi(0)+\bigo(|x|).
\label{eq:Phi-expansion}
\end{equation}

Define the rescaled coordinate transformation
\begin{equation*}
\Phi_r(y):=\frac{\Phi(ry)}{r}.
\end{equation*}
Then we have
\begin{align*}
&\Phi_r(y)=D\Phi(0)y+\bigo(r|y|^2),\\
&D \Phi_r(y)=D \Phi(ry)=D\Phi(0)+\bigo(r|y|),\\
&D^2\Phi_r(y)=rD^2\Phi(ry)\qquad \text{a.e.}.\label{eq:Phi-r-C2}
\end{align*}
Hence, on every fixed bounded annulus, it holds that
\begin{equation}\label{eq:Phi-r-bounds} 
\left\|D\Phi_r\right\|_{L^\infty}
+
\left\|(D\Phi_r)^{-1}\right\|_{L^\infty}
\le C,
\qquad
\left\|D^2\Phi_r\right\|_{L^\infty}
\le Cr.
\end{equation}

Let us fix three annuli:
\begin{align*}
    \mathcal A_0
 =
\left\{y:\frac14<|y|<4\right\},
\qquad
\mathcal A_1
 =
\left\{y:\frac12<|y|<2\right\},\qquad \mathcal A_2
 =
\left\{y:\frac34<|y|<\frac32\right\}.
\end{align*}
By enlarging $\mathcal Z_0$ and choosing $\mathcal Z_1$ appropriately,
one may arrange that $\Phi_r(\mathcal A_0)\Subset\mathcal Z_1$ for every sufficiently small $r>0$.

The intrinsic height function $u$ is related to $v$ by
\[
u(x)=v(\Phi(x)).
\]
Hence, 
\begin{align*}
    & u_r(y)=v_r(\Phi_r(y)),\\
    &D u_r
=
(D\Phi_r)^\top
D v_r(\Phi_r),\\
&D^2u_r
=
(D\Phi_r)^T
D^2v_r(\Phi_r)
D\Phi_r + \sum_{\ell=1}^2
(v_r)_\ell(\Phi_r)
D^2(\Phi_r)^\ell\qquad a.e..
\end{align*}

The bounds~\eqref{eq:vr-uniform-convexity}
and \eqref{eq:Phi-r-bounds} imply that
\begin{equation}
\|D u_r\|_{L^\infty(\mathcal A_0)}
\le C.
\label{eq:ur-gradient-bound}
\end{equation}
Also observe that
\[
cI
\le
(D\Phi_r)^\top
D^2v_r(\Phi_r)
D\Phi_r
\le
CI,
\]
while
\begin{align*}
    \left\|\sum_{\ell=1}^2
(v_r)_\ell(\Phi_r)
D^2(\Phi_r)^\ell \right\|_{L^\infty(\mathcal{A}_0)} \leq Cr.
\end{align*}
Hence, after reducing
$r$ if necessary,
\begin{equation}
cI\le D^2u_r\le CI
\qquad\text{a.e. on }\mathcal A_0,
\label{eq:ur-uniform-Hessian}
\end{equation}
with constants independent of $r$.

We now return to the Darboux equation~\eqref{eq:coordinate-Darboux}. A change of variables yields the following PDE for $u_r$, referred to as the rescaled Darboux equation in the sequel:
\begin{align}
\det M_r
 &=
K_r(y)\det g(ry) 
\left\{
 1-r^4g^{ij}(ry)(u_r)_i(u_r)_j
\right\},
\label{eq:rescaled-Darboux}
\end{align}
where
\begin{equation}
    M_r:=D^2u_r-r\Gamma(ry)Du_r\label{def: Mr}
\end{equation}
and 
\begin{equation*}
K_r(y)=r^{-2}K_g(ry).
\end{equation*}
We abbreviate $\G D u$ as the $2\times 2$ matrix $\left\{\G^k_{ij}\p_k u\right\}_{i,j}$, and denote $\|\G\| \equiv \max_{i,j,k \in \{1,2\}}\left\|\G^k_{ij}\right\|$ for any norm $\|\cdot\|$. Noticing that $\Gamma(x)=\bigo(|x|^3)$ in the vicinity of a degenerate point, so  by \eqref{eq:ur-gradient-bound} one has the bound
\begin{equation}
\left\|
r\Gamma(ry)D u_r
\right\|_{L^\infty(\mathcal A_0)}
\le Cr^4.
\label{eq:connection-small}
\end{equation}
Combining \eqref{eq:ur-uniform-Hessian} and
\eqref{eq:connection-small}, we obtain the inequalities for quadratic forms:
\begin{equation}
c_1I\le M_r\le C_1I
\qquad\text{on }\mathcal A_0,
\label{eq:Mr-uniform-ellipticity}
\end{equation}
for constants $c_1,C_1>0$ independent of $r$. Moreover, using \eqref{eq:ur-gradient-bound} again, we obtain that
\begin{equation}
1-r^4g^{ij}(ry)(u_r)_i(u_r)_j\ge 1-Cr^4\ge\frac12
\label{eq:qr-lower}
\end{equation}
for all sufficiently small $r>0$.

We are now in a position to apply the Evans--Krylov theorem (\emph{cf.} \emph{e.g.}, \cite{cc}). The rescaled Darboux Equation~\eqref{eq:rescaled-Darboux} is tantamount to
\begin{equation}
\mathscr{F}_r(y,D u_r,D^2u_r)=0,
\label{eq:Fr-equation}
\end{equation}
where
\begin{align*}
\mathscr{F}_r(y,\xi,\net)
&:=
\log\det\left(
\net-r\Gamma(ry)\xi
\right) -\log\!\left(
K_r(y)\det g(ry)
\right)
\nonumber\\
&\quad
-\log\!\left(
1-r^4g^{ij}(ry)\xi_i \xi_j
\right).
\end{align*}
The operator $\mathscr{F}_r$ is concave in the $\net$ variable. Its linearisation along a solution is
\begin{equation}
\frac{\partial \mathscr{F}_r}{\partial \net_{ij}}
=
(M_r^{-1})^{ij},
\label{eq:Fr-linearization}
\end{equation}
where we recall $M_r$ from \eqref{def: Mr}. Then, in view of \eqref{eq:Mr-uniform-ellipticity}, the nonlinear differential operator $\mathscr{F}_r$ is uniformly elliptic, with ellipticity constants independent
of $r$. In addition, thanks to the assumptions $g\in C^{4,\alpha}$ and ${\rm Hess}_gK_g(P)>0$, we have
\[
0<c\le K_r\le C,
\qquad
\|K_r\|_{C^{2,\alpha}(\mathcal A_0)}\le C.
\]
From here and \eqref{eq:ur-gradient-bound},
\eqref{eq:ur-uniform-Hessian}, as well as the estimates for the rescaled metric coefficients, we deduce that the $C^{0,\alpha}$-norms of
the coefficients of $\mathscr{F}_r$ are bounded uniformly in $r>0$.

By now, we have only proved the uniform ellipticity along the solution to Equation~\eqref{eq:Fr-equation}. In order to apply the Evans--Krylov theorem, one still needs to extend the
logarithmic Monge--Amp\`ere operator to a globally defined, concave, uniformly elliptic operator. To this end, define the increasing concave
$C^{1}$-function $\psi\colon \mathbb{R}\to\mathbb{R}$ as follows:
\begin{equation*}
 \psi(t):=   \begin{cases}
\displaystyle
\log\frac{c_{1}}{2}
+\frac{2}{c_{1}}\left(t-\frac{c_{1}}{2}\right)\qquad&\text{if }
 t\leq \dfrac{c_{1}}{2},\\
\log t\qquad&\text{if } \dfrac{c_{1}}{2}\leq t\leq 2C_{1},
\\
\displaystyle
\log(2C_{1})
+\frac{1}{2C_{1}}(t-2C_{1})
\qquad&\text{if } t\geq 2C_{1},
    \end{cases}
\end{equation*} 
with $c_1, C_1$ as in \eqref{eq:Mr-uniform-ellipticity}. It follows that 
\[
\frac{1}{2C_{1}}
\leq \psi'(t)\leq \frac{2}{c_{1}}
\qquad\text{for every }t\in\mathbb{R}.
\]
Then, for any symmetric $2 \times 2$ matrix $\met \in {\rm Sym_{2\times 2}(\R)}$, let us define $\psi(\met)$ by the usual spectral
functional calculus and put
\[
\Psi(\met):=\operatorname{tr}\psi(\met)
        =\psi\bigl(\lambda_{1}(\met)\bigr)
         +\psi\bigl(\lambda_{2}(\met)\bigr),
\]
where $\lambda_{1}(\met)$, $\lambda_{2}(\met)$ are the eigenvalues of
$\met$. The operator $\Psi$ is concave on ${\rm Sym_{2\times 2}(\R)}$.
Moreover, for any $\met,\net\in{\rm Sym_{2\times 2}(\R)}$ with $\net \geq 0$, one has that
\[
\frac{1}{2C_{1}}\operatorname{tr}\net
\leq
\Psi(\met+\net)-\Psi(\met)
\leq
\frac{2}{c_{1}}\operatorname{tr}\net.
\]
Hence, $\Psi$ is globally uniformly elliptic. Also $\Psi(\met)=\log\det \met$
whenever $\frac{c_{1}}{2}I\leq \met\leq 2C_{1}I$.

For the fixed solution $u_{r}$, set
\begin{align*}
   &(S_{r})_{ij}(y)
:=
r\Gamma_{ij}^{k}(ry)(u_{r})_{k}(y),\\ 
&B_{r}(y):=K_{r}(y)\det g(ry),\\
&q_{r}(y):=
1-r^{4}g^{ij}(ry)(u_{r})_{i}(u_{r})_{j}.
\end{align*}
Then define
\[
\widehat{\mathscr{F}}_{r}(y,\met)
:=
\Psi\bigl(\met-S_{r}(y)\bigr)
-\log B_{r}(y)-\log q_{r}(y),
\qquad
\met\in\operatorname{Sym}_{2\times 2}(\R).
\]
For every fixed $y$, the operator $\met \mapsto \widehat{\mathscr{F}}_{r}(y,\met)$ is concave and uniformly elliptic, with ellipticity constants
$\frac{1}{2C_{1}}$ and $\frac{2}{c_{1}}$ (both independent of $r$).

By the estimates in~\eqref{eq:ur-gradient-bound} and \eqref{eq:ur-uniform-Hessian}, $u_{r}$ have uniformly
bounded $C^{1,1}$-norms on the annulus $\mathcal{A}_0$. We have proved that
\[
\|S_{r}\|_{C^{0,\alpha}(\mathcal{A}_0)}
+
\|\log B_{r}\|_{C^{0,\alpha}(\mathcal{A}_0)}
+
\|\log q_{r}\|_{C^{0,\alpha}(\mathcal{A}_0)}
\leq C
\]
with $C$ independent of $r$. Thus, for every $\met\in\operatorname{Sym}_{2\times 2}(\R)$ and all $y,y'$ in $\mathcal{A}_0$, it holds that
\[
\left|
\widehat{\mathscr{F}}_{r}(y,\met)-\widehat{\mathscr{F}}_{r}(y',\met)
\right|
\leq C|y-y'|^{\alpha}.
\]
The constant here is
independent of both $\met$ and $r$. On the other hand, by \eqref{eq:ur-uniform-Hessian} we have $c_{1}I \leq D^{2}u_{r}-S_{r} \leq C_{1}I$ \emph{a.e.}, so 
\[
\Psi(M_{r})=\log\det M_{r}
\qquad\text{a.e.}.
\]
Moreover, the rescaled Darboux Equation~\eqref{eq:rescaled-Darboux} yields
\begin{equation}\label{hat-F eq}
\widehat{\mathscr{F}}_{r}\bigl(y,D^{2}u_{r}(y)\bigr)=0
\qquad\text{for a.e. }y \in \mathcal{A}_0.
\end{equation}
As $u_{r}\in C^{1,1}$ and $\widehat{\mathscr{F}}_{r}$ is continuous and uniformly elliptic, $u_{r}$ is also a viscosity solution to \eqref{hat-F eq}.

We may now apply the interior Evans--Krylov estimates~\cite{cc} to the globally uniformly elliptic equation~\eqref{hat-F eq} to conclude that 
\begin{equation}
\|u_r\|_{C^{2,\beta}({\mathcal A}_1)} \le C \left(\|u_r\|_{L^\infty(\mathcal{A}_0)}+1\right)
\le C'
\label{eq:ur-EK}
\end{equation}
for some
$\beta\in(0,\alpha)$, where $C$ and $C'$ are independent of $r$. This establishes both the uniform annular ellipticity and the uniform $C^{2,\beta}$ estimate needed in Step~5 below.\footnote{The important point is that the $C^{1,1}$-change between intrinsic and graph coordinates becomes asymptotically linear after rescaling $\Phi_r(y):=r^{-1}\Phi(ry)$. Therefore, the strong two-dimensional Monge--Ampère estimate available for the genuinely convex graphing function $v_r$ transfers to $u_r$ with an error $\sim\bigo(r)$. This circumvents the potential circularity in the regularity estimates: the application of Evans--Krylov requires uniform ellipticity, while we have to establish ellipticity via similar estimates.}

\smallskip
\noindent
{\bf Step~5.} We now derive the third-order estimates for $u_r$. In this step only, for notational simplicity, we put $u \equiv u_r$. We introduce the following notation:
\begin{align*}
&\Gamma_{r,ij}^{\ell}(y)
:=\Gamma_{ij}^{\ell}(ry),\qquad G_r^{ij}(y)
:=g^{ij}(ry),\\
&J_r(y)
:=\det g(ry),\qquad A_r(y):=K_r(y)J_r(y),\\
&q_r(y,\na u):=1-r^4G_r^{ij}(y)u_i u_j.
\end{align*}
We also introduce the rescaled covariant Hessian:
\begin{equation*}
M_{r,ij}
:=
u_{ij}-r\Gamma_{r,ij}^{\ell}u_\ell.
\end{equation*}
Then we may rewrite Equation~\eqref{eq:rescaled-Darboux} as
\begin{equation}
\det M_r=A_rq_r.
\label{eq:det-Mr}
\end{equation}

We \emph{claim} the following: under the assumptions that ~$A_r>0$, $q_r>0$, and $M_r$ is positive definite, the variables \[
w_k:=\partial_k u,\qquad k \in \{1,2\}
\] satisfy the linear, non-divergence-form PDE:
\begin{equation}
a_r^{ij}(w_k)_{ij}
+b_r^\ell(w_k)_\ell
=f_{r,k},
\label{eq:linear-wk}
\end{equation}
where
\begin{align*}
&a_r^{ij}=(M_r^{-1})^{ij},\\
&b_r^\ell
=
-r\,a_r^{ij}\Gamma_{r,ij}^{\ell}
+
\frac{2r^4}{q_r}G_r^{\ell \alpha}u_\alpha,\\
&f_{r,k}
=
r\,a_r^{ij}
(\Gamma_{r,ij}^{\ell})_{,k}u_\ell
+
(\log A_r)_{,k}
-
\frac{r^4}{q_r}
(G_r^{\alpha\beta})_{,k}u_\alpha u_\beta.
\end{align*} 
All derivatives are taken with
respect to the rescaled variable $y$. Indeed, since $A_r>0$ and $q_r>0$, Equation~\eqref{eq:det-Mr} is equivalent to
\begin{equation*}
\log\det M_r=\log A_r+\log q_r.
\end{equation*}
Differentiation with respect to $y_k$ then leads to
\[
a_r^{ij}\partial_kM_{r,ij}
=
(\log A_r)_{,k}
+
\frac{(q_r)_{,k}}{q_r}.
\] The definition of the rescaled Hessian $M_r$ satisfies
\begin{align*}
\partial_kM_{r,ij}
&=
u_{ijk}
-r\Gamma_{r,ij}^{\ell}u_{\ell k}
-r(\Gamma_{r,ij}^{\ell})_{,k}u_\ell\\
&=
(w_k)_{ij}
-r\Gamma_{r,ij}^{\ell}(w_k)_\ell
-r(\Gamma_{r,ij}^{\ell})_{,k}u_\ell.
\end{align*}
On the other hand,
\begin{align*}
(q_r)_{,k}
&=
-r^4(G_r^{\alpha\beta})_{,k}u_\alpha u_\beta
-r^4G_r^{\alpha\beta}
\left(
u_{\alpha k}u_\beta+u_\alpha u_{\beta k}
\right)\\
&=
-r^4(G_r^{\alpha\beta})_{,k}u_\alpha u_\beta
-2r^4G_r^{\alpha\beta}u_\beta(w_k)_\alpha.
\end{align*}
Direct computation then yields
\begin{align*}
a_r^{ij}(w_k)_{ij}
&+
\left(
-r\,a_r^{ij}\Gamma_{r,ij}^{\ell}
+
\frac{2r^4}{q_r}G_r^{\ell \beta}u_\beta
\right)(w_k)_\ell
\nonumber\\
&=
r\,a_r^{ij}
(\Gamma_{r,ij}^{\ell})_{,k}u_\ell
+
(\log A_r)_{,k}
-
\frac{r^4}{q_r}
(G_r^{\alpha\beta})_{,k}u_\alpha u_\beta,
\end{align*}
which is precisely \eqref{eq:linear-wk}. The preceding differentiation can be justified by applying the
difference-quotient method to \eqref{eq:det-Mr}, or alternatively
by first regularising the equation and then passing to the limit.

Note that Equation~\eqref{eq:linear-wk} can be further recast into the cofactor form: Let
\[
U_r^{ij}
:=
\operatorname{cof}(M_r)_{ij}
=
(\det M_r)(M_r^{-1})^{ij}
=
A_rq_r\,a_r^{ij}.
\]
Then
\begin{align}
U_r^{ij}(w_k)_{ij}
&+
\left(
-rU_r^{ij}\Gamma_{r,ij}^{\ell}
+
2r^4A_rG_r^{\ell \beta}u_\beta
\right)(w_k)_\ell
\nonumber\\
&=
rU_r^{ij}
(\Gamma_{r,ij}^{\ell})_{,k}u_\ell
+
q_r(A_r)_{,k}
-
r^4A_r(G_r^{\alpha\beta})_{,k}u_\alpha u_\beta.
\label{eq:cofactor-wk}
\end{align}

Recall from \eqref{eq:Mr-uniform-ellipticity} the uniform ellipticity of $M_r$. Moreover, in view of the identities:
\begin{align*}
    &(\Gamma_{r,ij}^{\ell})_{,k}(y)
=
r(\partial_{x_k}\Gamma_{ij}^{\ell})(ry),\qquad (G_r^{\alpha\beta})_{,k}(y)
 =
r(\partial_{x_k}g^{\alpha\beta})(ry),\\
&(K_r)_{,k}(y)
 =
r^{-1}(\partial_{x_k}K)(ry),\qquad(J_r)_{,k}(y)
 =
r(\partial_{x_k}\det g)(ry),\\
&(\log A_r)_{,k}
=
\frac{(K_r)_{,k}}{K_r}
+
\frac{(J_r)_{,k}}{J_r},
\end{align*}
we have 
\begin{equation*}
\left\|a_r^{ij}\right\|_{C^{0,\beta}(\mathcal A_1)}
+
\left\|b_r^\ell\right\|_{C^{0,\beta}(\mathcal A_1)}
+
\left\|f_{r,k}\right\|_{C^{0,\beta}(\mathcal A_1)}
\le C.
\end{equation*}
Therefore, applying the interior Schauder estimate to
\eqref{eq:linear-wk}, we obtain
\begin{equation*}
\|w_k\|_{C^{2,\beta}(\mathcal A_2)}
\le
C\left(
\|w_k\|_{C^0(\mathcal A_1)}
+
\|f_{r,k}\|_{C^{0,\beta}(\mathcal A_1)}
\right)
\le C.
\end{equation*}
Since $w_k=\partial_k u_r$, this proves that
\begin{equation}
\|u_r\|_{C^{3,\beta}(\mathcal A_2)}
\le C,
\label{eq:C3-annular}
\end{equation}
with $C$ independent of $r$.

\smallskip
\noindent
{\bf Step~6.}  We now scale back to the original variable $x$ with $r=|x|$ to obtain that
\begin{equation}
\left|Du(x)\right|\leq C|x|^2,\qquad \left|D^2u(x)\right|\leq C|x|,
\qquad
\left|D^3u(x)\right|\leq C
\label{eq:unscaled-estimates}
\end{equation}
for \emph{a.e.}  $x\neq0$ of sufficiently small modulus. In particular, the second derivatives $D^2u$ extend continuously across the origin with
\[
D^2u(0)=0.
\]
If the segment joining two points does not meet the origin, the
Lipschitz estimate for $D^2u$ follows by integrating $D^3u$ along
the segment. If the segment meets the origin, we split it at the
origin and use $|D^2u(x)|\leq C|x|$. In either case, we obtain the regularity $$u\in C^{2,1}.$$

This proves alternative \textup{(ii)} and completes the proof of the dichotomy Theorem~\ref{thm: dichotomy}.
\end{proof}

\begin{remark}
For \eqref{eq:unscaled-estimates}, we have actually established
the following scale-invariant bounds:
\begin{equation*}
\sup_{\mathcal A'_r}\left|D^3u\right|
+
r^\beta
\left[D^3u\right]_{C^\beta(\mathcal A'_r)}
\le C,
\end{equation*}
where $\mathcal{A}'_r = r \mathcal{A}_2$ is the concentric contraction of the annulus $\mathcal{A}_2$ by $r>0$, and $[\cdot]_{C^\beta}$ is the $\beta$-H\"{o}lder seminorm. No $C^{3,\beta}$-regularity at the degenerate point is asserted: indeed, the homogeneous model $u(x)=|x|^3$ (which is the standard local model exhibiting the same sharp
regularity as Iaia's example~\cite{iaia}) satisfies uniform
$C^{3,\beta}$-estimates after cubic rescaling on every fixed annulus 
but belongs only to $C^{2,1}$ at the origin.

\end{remark}


\section{$W^{3,p}$ isometric immersions for the degenerate Weyl problem}\label{sec: W3p}

In this section, we prove Theorem~\ref{thm:conditional-W3p}, reproduced below for convenience of the reader:
\begin{theorem*}
    
Let $(\Sigma,g)$ be a diffeomorphic $\stwo$ with $g\in W^{3,\infty}$, and let $(\M,\gbar)$ be a complete,  simply-connected 3-manifold with $\gbar \in W^{3,\infty}_\loc$. Suppose that the Gaussian curvature of $(\Sigma,g)$ satisfies $K_g \geq K_0$, and the sectional curvature of $(\M,\gbar)$ satisfies $\sec_{\overline g}\leq K_0$, where $K_0 \in \R$ is a constant. Assume that $\{g_\varepsilon\}_{0<\varepsilon<\varepsilon_0}$ is a family of positively curved $C^\infty$-metrics with uniformly bounded $W^{3,\infty}$-norms, such that $g_\e \to g$ in $\bigcap_{\alpha \in ]0,1[}C^{2,\alpha}(\Sigma)$, and that $\{g_\e\}$ admits $C^3$-isometric immersions $\{f_\varepsilon\}$ with images contained in a compact set $\mc \subset \M$ independent of $\varepsilon$.  

Then, under the assumptions that $\frac{1}{K_g-K_0}\in L^{\frac{p}{2}}(\Sigma,g)$ for some $p \geq 2$ and $\frac{k_\varepsilon}{H_\varepsilon}\geq\tau$ for some $\tau >0$ independent of $\e$ a.e. on $\Sigma$, there exists a subsequence of $\{f_\e\}$ that converges weakly in $W^{3,p}$ to an isometric immersion $f:(\Sigma,g)\to (\M,\gbar)$.
\end{theorem*}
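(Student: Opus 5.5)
We aim for a uniform bound $\sup_\e\|f_\e\|_{W^{3,p}(\Sigma)}\le C$, measured in the fixed atlas on $\Sigma$ and the atlas $\mathscr{A}$ on $\mc$. Once this holds, reflexivity gives a weakly convergent subsequence in $W^{3,p}$. The compact embedding $W^{3,p}\emb\emb C^{1,\beta}$ for $p\ge2$ upgrades this to strong $C^{1,\beta}$ convergence. The isometry relation $f_\e^\#\gbar=g_\e$ involves only first derivatives of $f_\e$, so it passes to the limit and yields $f^\#\gbar=g$; the limit is an immersion because $g$ is nondegenerate.

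By the Gauss--Weingarten formula $\overline\nabla_{df_\e(X)}df_\e(Y)=\two_\e(X,Y)\nu_\e$, third derivatives of $f_\e$ are controlled by $\na\two_\e$ plus lower-order terms. The latter are bounded uniformly by Lemma~\ref{lem:uniform-bounds} and \eqref{eq:uniform-coordinate-geometry}. The whole task is therefore to show $\|\na S_\e\|_{L^p}\le C$.

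To bound $\na S_\e$, we factor the shape operator as $S_\e=k_\e\jone\circ\jtwo$ from \eqref{S, jone, jtwo, 1}. Since $\jone$ is determined by $g_\e$ and $\nu_\e$, whose derivatives are controlled by $\two_\e$, we get $\na\jone\in L^\infty$. Hence
\begin{equation*}
|\na S_\e|\lesssim |\na k_\e|+k_\e|\na\jtwo|+k_\e .
\end{equation*}
Lemma~\ref{lem:uniform-bounds} gives $|\na k_\e|=|\na(k_\e^2)|/(2k_\e)\le C_\tau k_\e^{-1}$. By Lemma~\ref{lem:inverse-curvature-transfer}, together with \eqref{eq:main-integral-assumptions}, this term is uniformly bounded in $L^p$.

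The main obstacle is the term $k_\e|\na\jtwo|$, equivalently the derivative of the conformal structure of $\two_\e$. We would use the pinching \eqref{eq:uniform-global-pinching}, which says $\kappa_{2,\e}\le c\,\kappa_{1,\e}$. With it, $\jtwo$ ranges in a compact set of complex structures that are uniformly quasiconformal with respect to $\jone$. Lemma~\ref{lem: one-jet is J-hol} states that $j_1f_\e$ is $\J_\e$-holomorphic, and the mean curvature PDE \eqref{PDE for W} for $W_\e=1/H_\e$ then becomes a first-order Beltrami/Cauchy--Riemann-type system.

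We would work in the uniform isothermal charts of Lemma~\ref{lem:uniform-isothermal-charts}. There we write the trace-free part of $\two_\e$ as a complex function $\phi_\e$ and use the Codazzi equations. In these coordinates the Codazzi equations read $\bar\partial\phi_\e=\partial(e^{2u_\e}H_\e)+(\text{ambient curvature terms})$, in Hopf-differential form. Since $|H_\e|\le\tau^{-1}k_\e$, we have $|\na H_\e|$ comparable to the derivative of $k_\e$ up to the Beltrami coefficient. So we would write the system as
\begin{equation*}
\bar\partial\phi_\e-\mu_\e\,\partial\bar\phi_\e=F_\e,\qquad |\mu_\e|\le 1-\delta(\tau),\qquad |F_\e|\lesssim k_\e^{-1}+1 .
\end{equation*}
The key point is that the quasiconformal constant depends only on $\tau$.

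Next we apply the $L^p$ theory for the Beltrami equation via the Beurling transform (Bojarski/Iwaniec). Localised by cutoffs on $\ball_{8r_{\rm iso}}$, it gives $\|\na\phi_\e\|_{L^p}\le C(\|F_\e\|_{L^p}+\|\phi_\e\|_{L^\infty})$. This is valid for $p$ in the range $p<1+\|\mu\|_\infty^{-1}$ allowed by the ellipticity constant; if that range does not reach the given $p$, we would bootstrap or strengthen the use of pinching. Gathering terms gives $\na S_\e\in L^p$ uniformly.

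This step is the delicate one, since uniform ellipticity must come purely from $\tau$ while the right-hand side degenerates like $k_\e^{-1}$. As an alternative to the Beltrami route, we would try a direct $L^p$ estimate of \eqref{PDE for W}, using \eqref{bound for omega} multiplied by $(\kappa_{1,\e}-\kappa_{2,\e})^2/(\kappa_{1,\e}+\kappa_{2,\e})^2$ to absorb the singular connection form near umbilics.
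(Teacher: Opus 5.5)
\textbf{There is a genuine gap.} Your architecture matches the paper's. Codazzi together with the differentiated Gauss identity $H_\e^2-k_\e^2=e^{-4u_\e}|\Phi_\e|^2$ gives a Beltrami system for the Hopf function,
$\partial_{\bar z}\Phi_\e-a\,\partial_z\Phi_\e-b\,\partial_z\overline{\Phi_\e}=\mathcal F_\e$, with $a=\bar b=\overline{\Phi_\e}/(2H_\e e^{2u_\e})$, $|a|+|b|\le\sqrt{1-\tau^2}$ and $|\mathcal F_\e|\lesssim 1+k_\e^{-1}$. Note that both $\partial_z\Phi_\e$ and $\partial_z\overline{\Phi_\e}$ appear, not only the latter. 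The end game through Codazzi and Gauss--Weingarten is the same as in the paper, and your detour through $\jtwo$ is harmless but unnecessary.

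The gap is the step you flag yourself: reaching an arbitrary $p\ge 2$. With merely measurable coefficients of ellipticity $\sqrt{1-\tau^2}$, the Beurling-transform theory only covers the window $p<1+(1-\tau^2)^{-1/2}\approx 2+\tau^2/2$. For general Beltrami equations this window cannot be enlarged (radial stretches $z|z|^{1/K-1}$), so no cutoff argument helps. A bootstrap needs continuity of the coefficient $\overline{\Phi_\e}/(2H_\e e^{2u_\e})$, and on a fixed isothermal ball this is not available uniformly in $\e$. The coefficient is a ratio of two quantities that may both tend to zero near $\{k=0\}$, so even uniform H\"older or VMO bounds on $\Phi_\e$ and $H_\e$ separately give nothing where $H_\e\to 0$. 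Your alternative through \eqref{PDE for W} has the same defect: away from umbilics it needs uniform continuity of $(\kappa_{2,\e}-\kappa_{1,\e})/(\kappa_{1,\e}+\kappa_{2,\e})$, which likewise fails near the degeneracy set. ``Strengthening the pinching'' is not an option, since $\tau$ is given.

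The missing idea is a curvature-adapted localisation.
\begin{enumerate}
\item Work on $\B_{8r_0}(x_0)$ with $r_0=r_*\min\{1,k_\e(x_0)^2\}$. The Lipschitz bound on $k_\e^2$ from Lemma~\ref{lem:uniform-bounds} gives $k_\e\approx k_\e(x_0)\approx H_\e$ there.
\item Rescale $\zeta=z/r_0$ and divide $H_\e,\Phi_\e$ by $k_\e(x_0)$. The source $r_0k_\e(x_0)^{-1}\mathcal F_\e$ is then bounded in $L^\infty$.
\item Meyers' estimate gives uniform $W^{1,q_0}$, hence $C^{0,\alpha}$, bounds for the rescaled $\Phi_\e,H_\e$. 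Since the rescaled $H_\e$ is now bounded below by $c_\tau$, the Beltrami coefficient is uniformly H\"older, and this unlocks $W^{1,p}$ estimates for every $p<\infty$.
\item Scaling back gives $\int_{\B_{2r_0}(x_0)}|\na H_\e|^p\lesssim k_\e(x_0)^p r_0^{2-p}\lesssim\int_{\B_{8r_0}(x_0)}k_\e^{-p}$ when $k_\e(x_0)\le 1$, and a uniform constant otherwise.
\item Sum these local bounds with a Vitali-type covering built on the $\tfrac{1}{32}$-Lipschitz radius function $r_\e(x)=r_*\min\{1,k_\e(x)^2\}$. Its overlap is bounded independently of $\e$ and $\inf_\Sigma k_\e$, and the balls with $k_\e(x_0)\ge 1$ are uniformly few. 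The result is $\|\na H_\e\|_{L^p}^p\lesssim\int_\Sigma k^{-p}\,\dvg+1$.
\end{enumerate}
Without this scale-dependent step, your argument proves the theorem only for $p$ in the narrow $\tau$-dependent window above $2$.
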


Recall from~\eqref{small k, def} that $k := \sqrt{K_g-K_0}$. Then, for some $p \geq 2$, it is assumed that
    \begin{equation*}
   k^{-1} \in L^p(\Sigma,g).
    \end{equation*}
To prove the uniform $W^{3,p}$-regularity for $f_\varepsilon$, it suffices to establish the uniform $W^{1,p}$-bound for $H_\varepsilon$, the regularised mean curvatures. It is known that, by ellipticity of Codazzi equations, the gradient of mean curvature is controlled by the traceless part of the second fundamental form. The potential loss of the uniform ellipticity of the Codazzi equations is prevented by the ``pinching'' condition~\eqref{eq:uniform-global-pinching}:
$$\frac{k_\varepsilon}{H_\varepsilon}\geq\tau>0.$$ This enables us to bound $\|\na H_\e\|_{L^p} \leq C$ uniformly in $\e$ by a covering argument.

\subsection{A complex PDE for the traceless part of the second fundamental form}

Fix $x_0\in\Sigma$. Choose an isothermal coordinate $z=x+iy$ centred at $x_0$, in which  the uniform bounds in Lemma~\ref{lem:uniform-isothermal-charts} hold and $$
g_\varepsilon=e^{2u_\varepsilon}(dx^2+dy^2).$$ Thanks to the solution to the nondegenerate Weyl problem (\textit{cf. e.g.,} Nirenberg~\cite{N53}), $g_\e$ admits smooth isometric embeddings, whose second fundamental forms are labelled as 
$$\mathrm{II}_\varepsilon=L_\varepsilon\,dx^2+2M_\varepsilon\,dx\,dy+N_\varepsilon\,dy^2.
$$
We write 
\begin{align}
\Phi_\varepsilon =\frac{L_\varepsilon-N_\varepsilon}{2}-iM_\varepsilon,
\end{align}
which is nothing but $\mathring{\two}_\e : = \two_\e - {\rm trace}(\two_\e) I_{2 \times 2}$ in the complex coordinate $z$. Recall that $\partial_z=\frac12(\partial_x-i\partial_y)$ and $\partial_{\overline z}=\frac12(\partial_x+i\partial_y)$; meanwhile, since $H_\e$ is real-valued, $4|\partial_zH_\varepsilon|^2=|\partial_xH_\varepsilon|^2+|\partial_yH_\varepsilon|^2.$

Observe the first-order complex PDEs for $\Phi_\e$:
\begin{lemma}\label{lem: 1st order PDE for Phi}
The traceless part of the second fundamental form satisfies 
\begin{align}\label{eq:complex-Codazzi}
\partial_{\overline z}\Phi_\varepsilon=e^{2u_\varepsilon}\partial_zH_\varepsilon \underbrace{-\frac12(\mathcal R_{2,\varepsilon}+i\mathcal R_{1,\varepsilon})}_{\,=:\, \mathcal E_\varepsilon}
\end{align}
and
\begin{align}\label{eq:Hopf-standard-Beltrami}
&\partial_{\overline z}\Phi_\varepsilon-\frac{\overline{\Phi_\varepsilon}\partial_z\Phi_\varepsilon+\Phi_\varepsilon\partial_z\overline{\Phi_\varepsilon}}{2H_\varepsilon e^{2u_\varepsilon}} =\underbrace{\mathcal E_\varepsilon+\frac{e^{2u_\varepsilon}}{2H_\varepsilon}\partial_z(k_\varepsilon^2)-\frac{2e^{-2u_\varepsilon}(\partial_z u_\varepsilon)}{H_\varepsilon}|\Phi_\varepsilon|^2}_{=:\,\mathcal F_\varepsilon}.
\end{align}
Equation~\eqref{eq:Hopf-standard-Beltrami} is uniformly elliptic under the pinching condition~\eqref{eq:uniform-global-pinching}: $\frac{k_\varepsilon}{H_\varepsilon}\geq\tau>0$. The ellipticity coefficients depend only on $\tau$.  \end{lemma}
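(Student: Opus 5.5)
The plan is to derive \eqref{eq:complex-Codazzi} directly from the Codazzi equations written in the isothermal chart of Lemma~\ref{lem:uniform-isothermal-charts}. Then I would eliminate $\partial_zH_\e$ using the Gauss equation to reach \eqref{eq:Hopf-standard-Beltrami}, and finally read off ellipticity from the size of the Beltrami coefficients.

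\textbf{Step 1 (complex Codazzi).} For $g_\e=e^{2u_\e}(dx^2+dy^2)$ the Christoffel symbols are
$\Gamma^1_{11}=\Gamma^2_{12}=-\Gamma^1_{22}=\partial_xu_\e$ and $\Gamma^2_{22}=\Gamma^1_{12}=-\Gamma^2_{11}=\partial_yu_\e$. In $(\M,\gbar)$ the Codazzi equations carry an ambient curvature term:
\begin{align*}
(\nabla_{\partial_x}\two_\e)(\partial_y,\cdot)-(\nabla_{\partial_y}\two_\e)(\partial_x,\cdot)=R^\M\bigl(df_\e\partial_x,df_\e\partial_y,\nu_\e,df_\e\,\cdot\bigr).
\end{align*}
I would denote by $\mathcal R_{1,\e}$ and $\mathcal R_{2,\e}$ the two components obtained by evaluating the right-hand side on $\partial_x$ and $\partial_y$. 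Expanding with the above Christoffel symbols gives the two classical relations
\begin{align*}
L_y-M_x&=u_y(L+N)+\mathcal R_{1,\e},\\
M_y-N_x&=-u_x(L+N)+\mathcal R_{2,\e}.
\end{align*}
Since $L+N=2e^{2u_\e}H_\e$, the terms involving $u_x,u_y$ combine exactly with the derivative of $e^{2u_\e}$ in $\partial_z(e^{2u_\e}H_\e)$. Taking the combination corresponding to real plus $i$ times imaginary part then yields $\partial_{\overline z}\Phi_\e=e^{2u_\e}\partial_zH_\e+\mathcal E_\e$. This part is routine bookkeeping; the only care needed is with the signs in $\mathcal E_\e$.

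\textbf{Step 2 (Gauss equation).} From $LN-M^2=e^{4u_\e}k_\e^2$ and $(L+N)/2=e^{2u_\e}H_\e$ one gets
\begin{align*}
|\Phi_\e|^2=\tfrac14(L-N)^2+M^2=e^{4u_\e}(H_\e^2-k_\e^2),
\end{align*}
that is, $H_\e^2=k_\e^2+e^{-4u_\e}|\Phi_\e|^2$. Applying $\partial_z$ and dividing by $2H_\e>0$ gives
\begin{align*}
e^{2u_\e}\partial_zH_\e=\frac{e^{2u_\e}}{2H_\e}\partial_z(k_\e^2)+\frac{\overline{\Phi_\e}\partial_z\Phi_\e+\Phi_\e\partial_z\overline{\Phi_\e}}{2H_\e e^{2u_\e}}-\frac{2e^{-2u_\e}\partial_zu_\e}{H_\e}|\Phi_\e|^2.
\end{align*}
Substituting this into \eqref{eq:complex-Codazzi} produces \eqref{eq:Hopf-standard-Beltrami} verbatim.

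\textbf{Step 3 (ellipticity).} Since $\partial_z\overline{\Phi_\e}=\overline{\partial_{\overline z}\Phi_\e}$, equation \eqref{eq:Hopf-standard-Beltrami} has the Beltrami form
\begin{align*}
\partial_{\overline z}\Phi_\e-q_1\partial_z\Phi_\e-q_2\overline{\partial_{\overline z}\Phi_\e}=\mathcal F_\e,
\end{align*}
with $|q_1|=|q_2|=\frac{|\Phi_\e|}{2H_\e e^{2u_\e}}$. By Step~2, $|q_1|+|q_2|=\sqrt{1-(k_\e/H_\e)^2}\le\sqrt{1-\tau^2}<1$ under \eqref{eq:uniform-global-pinching}. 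This is exactly the uniform ellipticity condition for first-order Beltrami-type systems, with constant depending only on $\tau$. It also uses $H_\e>0$, which holds because $f_\e$ is elliptic.

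The main obstacle I expect is Step~1 in the non-flat ambient: identifying precisely the tangential components of $R^\M(\cdot,\cdot)\nu_\e$ entering $\mathcal R_{1,\e},\mathcal R_{2,\e}$, and tracking the conventions so that $\mathcal E_\e=-\frac12(\mathcal R_{2,\e}+i\mathcal R_{1,\e})$. The conformal-factor terms are checked most safely by verifying the identity first for $\M=\R^3$, where $\mathcal E_\e=0$ gives the classical Hopf relation.
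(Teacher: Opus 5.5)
Your proposal is correct and follows the paper's proof essentially step by step. You apply $\partial_{\overline z}$ to $\Phi_\e$ and use the isothermal Codazzi equations with $L_\e+N_\e=2e^{2u_\e}H_\e$ to get the complex Codazzi identity. You then differentiate $H_\e^2-k_\e^2=e^{-4u_\e}|\Phi_\e|^2$ and substitute to get \eqref{eq:Hopf-standard-Beltrami}. Finally, you read off ellipticity from $|a|+|b|=|\Phi_\e|/(H_\e e^{2u_\e})=\sqrt{1-(k_\e/H_\e)^2}\le\sqrt{1-\tau^2}$.

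The only blemish is sign bookkeeping in the ambient term: your abstract Codazzi identity with $R^\M(\cdot,\cdot,\nu_\e,\cdot)$ and its coordinate expansion differ from each other by a sign. This affects only the zeroth-order term $\mathcal E_\e$, and later arguments use only its boundedness.
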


Here and hereafter, $\mathcal R_{i,\varepsilon}$, $i \in\{1,2\}$ denote the curvature coefficients:
\begin{align*}
&\mathcal R_{1,\varepsilon}=R^\M\bigl(df_\varepsilon(\partial_x),df_\varepsilon(\partial_y),df_\varepsilon(\partial_x),\nu_\varepsilon\bigr),\\
&\mathcal R_{2,\varepsilon}= R^\M\bigl(df_\varepsilon(\partial_x),df_\varepsilon(\partial_y),df_\varepsilon(\partial_y),\nu_\varepsilon\bigr).
\end{align*} 

\begin{proof}[Proof of Lemma~\ref{lem: 1st order PDE for Phi}]
It follows from the definition of $H_\varepsilon$ and the Gauss equation
\begin{equation}\label{eq:Hopf-Gauss}
\begin{cases}
H_\varepsilon=\frac{L_\varepsilon+N_\varepsilon}{2e^{2u_\varepsilon}},\\ k_\varepsilon^2=e^{-4u_\varepsilon}(L_\varepsilon N_\varepsilon-M_\varepsilon^2),\\
H_\varepsilon^2-k_\varepsilon^2=e^{-4u_\varepsilon}|\Phi_\varepsilon|^2.
\end{cases}
\end{equation}
Also, the Codazzi equations can be expressed as
\begin{equation}\label{xx, 0916}
\begin{cases} 
\partial_xM_\varepsilon-\partial_yL_\varepsilon+(\partial_yu_\varepsilon)(L_\varepsilon+N_\varepsilon)=\mathcal R_{1,\varepsilon},\\
\partial_xN_\varepsilon-\partial_yM_\varepsilon-(\partial_xu_\varepsilon)(L_\varepsilon+N_\varepsilon)=\mathcal R_{2,\varepsilon}.
\end{cases}
\end{equation}
Then Equation~\eqref{eq:complex-Codazzi} holds by applying $\partial_{\overline z}$ to $\Phi_\varepsilon$ and using Eqs.~\eqref{xx, 0916} and \eqref{eq:Hopf-Gauss}. On the other hand, differentiation of  the last identity in~\eqref{eq:Hopf-Gauss} yields that
\begin{equation*}
2H_\varepsilon\partial_zH_\varepsilon-\partial_z(k_\varepsilon^2)=e^{-4u_\varepsilon}\Big(\overline{\Phi_\varepsilon}\partial_z\Phi_\varepsilon+\Phi_\varepsilon\partial_z\overline{\Phi_\varepsilon}-4(\partial_zu_\varepsilon)|\Phi_\varepsilon|^2\Big).
\end{equation*} 
This together with the identity $\partial_zH_\varepsilon=e^{-2u_\varepsilon}(\partial_{\overline z}\Phi_\varepsilon-\mathcal E_\varepsilon)
$ leads to Equation~\eqref{eq:Hopf-standard-Beltrami}.

To see the uniform ellipticity of \eqref{eq:Hopf-standard-Beltrami}, we show that the principal part (\textit{i.e.}, the left-hand side) of the equation can be expressed in the form $\p_{\bar{z}}\Phi_\e + a\p_z\Phi_\e + b\p_z\overline{\Phi_\e}$ with $|a|+|b| <1$. Indeed, here $a=\bar{b} = \frac{\overline{\Phi_\e}}{2H_\e e^{2u_\e}}$, hence 
\begin{align*}
    |a|+|b| = \frac{|\Phi_\varepsilon|}{H_\varepsilon e^{2u_\varepsilon}}=\sqrt{1-\left(\frac{k_\varepsilon}{H_\varepsilon}\right)^2}\leq\sqrt{1-\tau^2}<1.
\end{align*}
\end{proof}

\subsection{Uniform $W^{1,p}$-estimate for $H_\e$: local case}
  
To proceed, we prove a local $\dot{W}^{1,p}$-estimate for the mean curvatures $H_\e$ over each geodesic ball on $\Sigma$ that may shrink to its centre $x_0$. We bound it by $k_\e(x_0)^{2/p}$ from the above, modulo some constant uniform in $\e$. In what follows, by writing integrals of the form $\int_{\B_\rho(x)}f\,\dd V_h$, we always understand the domain of integration as the geodesic ball with respect to $h$.

\begin{proposition}\label{prop:GC-arbitrary-p} 

Under the assumptions in Theorem~\ref{thm:conditional-W3p}, in particular, the pinching condition~\eqref{eq:uniform-global-pinching}, the following holds: Given any $\e>0$ and $x_0 \in \Sigma$, set $$r_0=r_*\min\left\{1,k_\varepsilon(x_0)^2\right\},$$ where $r_*>0$ is a small constant depending only on $p$, $\tau$, and the geometry of $\Sigma$ and $\M$. Then, for some constant $C_{p,\tau}>0$ independent of $\varepsilon$, $x_0$, and $\inf_\Sigma k_\varepsilon$, one has that
$$
\int_{\B_{2r_0}(x_0)}|\nabla H_\varepsilon|^p\,\dd V_{g_\varepsilon}\leq C_{p,\tau}k_\varepsilon(x_0)^pr_0^{2-p}.
$$
\end{proposition}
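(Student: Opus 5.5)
We work in the isothermal chart of Lemma~\ref{lem:uniform-isothermal-charts} centred at $x_0$, with $r_*$ small enough that $\B_{4r_0}(x_0)$ lies inside the chart and in a single ambient chart. There the metric $g_\e$ and the Euclidean metric are uniformly comparable. The plan is to apply interior $L^p$ estimates for the uniformly elliptic Beltrami-type system \eqref{eq:Hopf-standard-Beltrami} to $\Phi_\e$. We then transfer the bound to $\nabla H_\e$ through \eqref{eq:complex-Codazzi}, namely $e^{2u_\e}\partial_z H_\e=\partial_{\bar z}\Phi_\e-\mathcal E_\e$.

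The first step is to freeze the size of the curvature on the small ball. By Lemma~\ref{lem:uniform-bounds}, $|\nabla(k_\e^2)|\le C_\tau$. Hence on $\B_{4r_0}(x_0)$ we have $|k_\e^2-k_\e(x_0)^2|\le 4C_\tau r_*k_\e(x_0)^2$. Choosing $r_*$ small, this gives $\frac12 k_\e(x_0)^2\le k_\e^2\le 2k_\e(x_0)^2$ on the ball. By the pinching condition, $H_\e\approx k_\e(x_0)$ as well, and $|\Phi_\e|\le e^{2u_\e}H_\e\lesssim k_\e(x_0)$. This is precisely why $r_0$ is taken proportional to $k_\e(x_0)^2$: the Lipschitz bound on $k_\e^2$ only controls oscillation at that scale.

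The second step rescales to the unit disc. Set $\widetilde\Phi(w)=\Phi_\e(r_0w)/k_\e(x_0)$ on $B_4$. Then $\widetilde\Phi$ solves
\begin{align*}
\partial_{\bar w}\widetilde\Phi+a\,\partial_w\widetilde\Phi+b\,\partial_w\overline{\widetilde\Phi}=\frac{r_0}{k_\e(x_0)}\,\mathcal F_\e(r_0w),
\end{align*}
with $|a|+|b|\le\sqrt{1-\tau^2}$. The coefficients $a,b$ are only measurable, but this suffices for $L^p$ theory when $p$ is close to $2$. For a general $p$ we need continuity of $a$ and $b$. This holds because they are built from $\Phi_\e/H_\e$, which is Hölder after rescaling by the Bojarski/Morrey $W^{1,2+\delta}$ estimate; one can bootstrap from that estimate. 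This dependence is the reason $r_*$ is allowed to depend on $p$.

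The right-hand side is controlled term by term. We have $|\mathcal E_\e|\le C$ because $\two_\e$ is bounded. Next, $|e^{2u_\e}\partial_z(k_\e^2)/H_\e|\le C_\tau/k_\e(x_0)$. Finally, $|\Phi_\e|^2/H_\e\lesssim k_\e(x_0)$. After rescaling, the right-hand side is therefore bounded by
\begin{align*}
C\,r_0\bigl(k_\e(x_0)^{-1}+k_\e(x_0)^{-2}+1\bigr)\lesssim r_*.
\end{align*}
It is thus uniformly bounded in $L^\infty(B_4)$, using $r_0\le r_*k_\e(x_0)^2$ and $k_\e\le C_\tau$. Together with $\|\widetilde\Phi\|_{L^\infty}\le C$, the interior estimate gives $\|\partial_{w}\widetilde\Phi\|_{L^p(B_2)}+\|\partial_{\bar w}\widetilde\Phi\|_{L^p(B_2)}\le C_{p,\tau}$.

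The last step scales back. This yields $\int_{\B_{2r_0}}|\partial_{\bar z}\Phi_\e|^p\lesssim k_\e(x_0)^pr_0^{2-p}$, and $\mathcal E_\e$ contributes only $Cr_0^2\le Ck_\e(x_0)^pr_0^{2-p}$ after adjusting constants. We use $r_0\lesssim k_\e(x_0)^2\le Ck_\e(x_0)$ when $k_\e(x_0)\le1$, and the trivial case otherwise. Since $H_\e$ is real, $|\nabla H_\e|\approx|\partial_zH_\e|$, which gives the claim.

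The main obstacle is the $L^p$ theory for the Beltrami system with merely measurable, nonlinear coefficients $a=\bar b=\overline{\Phi_\e}/(2H_\e e^{2u_\e})$ when $p$ is large. I would first try a quasiregular Hölder estimate to make $a$ uniformly $C^{0,\gamma}$ on the rescaled disc, using $W^{1,2+\delta}\hookrightarrow C^{0,\gamma}$. I would then freeze coefficients on small subdiscs, where the Beurling transform is a contraction in $L^p$, and patch these local estimates together by covering.
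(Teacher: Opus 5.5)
Your proposal is correct and follows essentially the same route as the paper. You use $|\nabla(k_\e^2)|\le C_\tau$ to get $k_\e\approx H_\e\approx k_\e(x_0)$ on a ball of radius $\sim r_*\min\{1,k_\e(x_0)^2\}$, rescale $\Phi_\e,H_\e$ by $k_\e(x_0)$, and bound the rescaled right-hand side of \eqref{eq:Hopf-standard-Beltrami} in $L^\infty$. You then get a $W^{1,q_0}$ bound with $q_0=q_0(\tau)>2$ (the paper cites Meyers, you cite Bojarski), use Morrey to make $\widetilde\Phi_\e/(\widetilde H_\e e^{2\widetilde u_\e})$ H\"older, upgrade to $W^{1,p}$, pass to $\nabla H_\e$ through \eqref{eq:complex-Codazzi}, and scale back.

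Two small points. First, $|\mathcal E_\e|\le C$ holds because $R^\M$ is bounded on $\mc$ and $df_\e,\nu_\e$ are bounded, not because $\two_\e$ is. Second, the H\"older bound on $\widetilde H_\e$ should be derived explicitly from \eqref{eq:complex-Codazzi}, as the paper does, and $r_*$ does not actually need to depend on $p$ for this argument.
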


Note a crucial yet subtle point of the above result: $r_*$ is not only uniform in $\e$, but also independent of the position of $x_0$. This is crucial for the developments in the next subsection.

To elaborate on Proposition~\ref{prop:GC-arbitrary-p}, notice that for $k_\varepsilon(x_0)\leq1$, one has $r_0=r_*k_\varepsilon(x_0)^2$, so 
\begin{equation*}
\int_{\B_{2r_0}(x_0)}|\nabla H_\varepsilon|^p\,\dd V_{g_\varepsilon}\leq C_{p,\tau}k_\varepsilon(x_0)^pr_0^{2-p}\leq C_{p,\tau}\int_{\B_{8r_0}(x_0)}k_\varepsilon^{-p}\,\dd V_{g_\varepsilon},
\end{equation*}
thanks to $k_\e^{-p}(x) \approx k_\e^{-p}(x_0)$ for $x \in \B_{8r_0}(x_0)$; see \eqref{eq:GC-k-comparability}.  If $k_\varepsilon(x_0)\geq1$, then $r_0=r_*k$, so 
\begin{equation*}
\int_{\B_{2r_*}(x_0)}|\nabla H_\varepsilon|^p\,\dd V_{g_\varepsilon}\leq C_{p,\tau}k_\varepsilon(x_0)^pr_*^{2-p}\leq C_{p,\tau,r_*}.
\end{equation*}
The discussions above suggest that a uniform bound for the $L^p$-norm of $k_\e^{-1}$ near the degeneracy set $\{k=0\}$ plays an important role. 

\begin{proof}[Proof of Proposition~\ref{prop:GC-arbitrary-p}]

We divide our arguments into six steps below. 

\smallskip
\noindent
{\bf Step~1.} We first choose the scales. Let $r_{\mathrm{iso}}>0$ be as in Lemma~\ref{lem:uniform-isothermal-charts}, and let $C_\tau$ be as in Lemma~\ref{lem:uniform-bounds}. In particular, the geodesic ball $\B_{16 r_{\rm iso}}(x_0)$ lies in a single isothermal chart. Then we choose $r_*>0$ that satisfies $14r_*<r_{\mathrm{iso}}$ and $8C_\tau r_*\leq\frac14$, and set $r_0=r_*\min\{1,k_\varepsilon(x_0)^2\}$ as in the statement of this proposition. Thus, $$
\left|k_\varepsilon(x)^2-k_\varepsilon(x_0)^2\right|\leq8C_\tau r_0\leq8C_\tau r_*k_\varepsilon(x_0)^2\leq\frac14k_\varepsilon(x_0)^2 \qquad\text{for }x\in \B_{8r_0}(x_0),
$$
namely that
\begin{align}\label{eq:GC-k-comparability}
\frac{\sqrt3}{2}k_\varepsilon(x_0)\leq k_\varepsilon(x)\leq\frac{\sqrt5}{2}k_\varepsilon(x_0)\qquad\text{for }x\in \B_{8r_0}(x_0).
\end{align}
As $k_\e \leq H_\e \leq \tau^{-1}k_\e$ by~\eqref{eq:uniform-global-pinching} and the arithmetic mean-geometric mean inequality, one has
\begin{align}\label{eq:GC-H-comparability}
c_\tau\leq\frac{H_\varepsilon(x)}{k_\varepsilon(x_0)}\leq C_\tau\qquad\text{for }x\in \B_{8r_0}(x_0).
\end{align}

\smallskip
\noindent
{\bf Step~2.} In view of Step~1, let us rescale $H_\e$ and $\Phi_\e$ by $k_\e(x_0)^{-1}$ on $\B_{8r_0}(x_0)$. More precisely, consider the new coordinate 
\begin{equation*}
    \zeta := \frac{z}{r_0}
\end{equation*}
and write 
\begin{align*}
    B_s:=\left\{\frac{z(x)}{r_0}:\,x\in \B_{sr_0}(x_0)\right\},\qquad 0<s\leq8.
\end{align*}
Define on $B_8$ the rescaled quantities:
\begin{align*}
&\widetilde H_\varepsilon(\zeta):=\frac{H_\varepsilon(r_0\zeta)}{k_\varepsilon(x_0)} \geq c_\tau >0,\\ &\widetilde\Phi_\varepsilon(\zeta):=\frac{\Phi_\varepsilon(r_0\zeta)}{k_\varepsilon(x_0)},\\
&\widetilde u_\varepsilon(\zeta):=u_\varepsilon(r_0\zeta).
\end{align*} 
Recall the uniform estimates for the conformal factors of $g_\e$
Lemma~\ref{lem:uniform-isothermal-charts} on $\B_{8r_{\rm iso}}(x_0)$. This together with~\eqref{eq:GC-H-comparability} and the Gauss equation~\eqref{eq:Hopf-Gauss} leads to
\begin{align*}
\left\|\widetilde u_\varepsilon\right\|_{C^{0,1}(B_8)} + \left\|\widetilde H_\varepsilon\right\|_{L^\infty(B_8)} + \left\|\widetilde\Phi_\varepsilon\right\|_{L^\infty(B_8)} \leq C_\tau.
\end{align*}

The equations for $\widetilde\Phi_\varepsilon$ are easily derived by rescaling the PDEs in Lemma~\ref{lem: 1st order PDE for Phi}. Indeed, from Equation~\eqref{eq:Hopf-standard-Beltrami} we infer that 
\begin{equation}\label{eq:GC-rescaled-Beltrami}
\partial_{\overline\zeta}\widetilde\Phi_\varepsilon-\frac{\overline{\widetilde\Phi_\varepsilon}\partial_\zeta\widetilde\Phi_\varepsilon+\widetilde\Phi_\varepsilon\partial_\zeta\overline{\widetilde\Phi_\varepsilon}}{2\widetilde H_\varepsilon e^{2\widetilde u_\varepsilon}}=\frac{r_0}{k_\varepsilon(x_0)}\mathcal F_\varepsilon(r_0\zeta),
\end{equation} 
and from the Codazzi Equation~\eqref{eq:complex-Codazzi} we infer that
\begin{align}\label{eq:GC-rescaled-Codazzi}
e^{2\widetilde u_\varepsilon}\partial_\zeta\widetilde H_\varepsilon&=\partial_{\overline\zeta}\widetilde\Phi_\varepsilon-\frac{r_0}{k_\varepsilon(x_0)}\mathcal E_\varepsilon(r_0\zeta).
\end{align}

\smallskip
\noindent
{\bf Step~3.} Now we check that the right-hand sides of Equations~\eqref{eq:GC-rescaled-Beltrami} and \eqref{eq:GC-rescaled-Codazzi} are bounded. Indeed, by Lemmata~\ref{lem:uniform-isothermal-charts} and~\ref{lem:uniform-bounds}, $|\partial_z(k_\varepsilon^2)|\leq C_\tau$ while $e^{2u_\varepsilon}$,  $|\partial_zu_\varepsilon|$, and $\mathcal{E}_\e$ are uniformly bounded. Hence, 
\begin{align*}
\left|\frac{e^{2u_\varepsilon}}{2H_\varepsilon}\partial_z(k_\varepsilon^2)\right| \leq C_\tau k_\varepsilon^{-1}.
\end{align*}
In addition, taking into account the condition~\eqref{eq:uniform-global-pinching} and uniform upper bound for $k_\varepsilon$, we have
\begin{equation*}
\left|\frac{2e^{-2u_\varepsilon}\partial_zu_\varepsilon}{H_\varepsilon}\right| |\Phi_\varepsilon|^2=2e^{2u_\varepsilon}|\partial_zu_\varepsilon|\left(\frac{H_\varepsilon^2-k_\varepsilon^2}{H_\varepsilon}\right)\leq C_\tau.
\end{equation*}
Thus, in view of the definition of $\mathcal{F}_\e$ and the bound~\eqref{eq:GC-k-comparability},
\begin{equation}\label{eq:Hopf-source-bound}
\left|\mathcal F_\varepsilon(x)\right|\leq C_\tau\left(1+\frac{1}{k_\varepsilon(x)}\right) \leq C_\tau\left(1+\frac{1}{k_\varepsilon(x_0)}\right)\qquad\text{for } x \in \B_{8r_0}(x_0).
\end{equation}
This together with $|\mathcal{E}_\e| \leq C$ implies that \begin{equation}\label{eq:GC-rescaled-source}
\left\|\frac{r_0}{k_\varepsilon(x_0)}\mathcal F_\varepsilon(r_0\,\cdot)\right\|_{L^\infty(B_8)}+\left\|\frac{r_0}{k_\varepsilon(x_0)}\mathcal E_\varepsilon(r_0\,\cdot)\right\|_{L^\infty(B_8)}\leq C_\tau.
\end{equation}

\smallskip
\noindent
{\bf Step~4.} We are now at the staging of applying elliptic estimates to Equations~\eqref{eq:GC-rescaled-Beltrami} and \eqref{eq:GC-rescaled-Codazzi} derived in Step~2, as all the coefficients in these PDEs (which are uniformly elliptic by Lemma~\ref{lem: 1st order PDE for Phi}) have been shown to lie in $L^\infty$.

Indeed, applying \textit{e.g.}, Meyers~\cite[Theorem~2]{Meyers} to \eqref{eq:GC-rescaled-Beltrami}, we deduce that 
$$
\left\|\widetilde\Phi_\varepsilon\right\|_{W^{1,q_0}(B_6)}\leq C_\tau\left(\left\|\widetilde\Phi_\varepsilon\right\|_{L^2(B_8)}+\left\|\frac{r_0}{k_\varepsilon(x_0)}\mathcal F_\varepsilon(r_0\,\cdot)\right\|_{L^{q_0}(B_8)}\right)\leq C_\tau\qquad\text{for some }q_0 = q_0(\tau) >2. 
$$
In particular, it implies $\left\|\partial_{\overline\zeta}\widetilde\Phi_\varepsilon\right\|_{L^{q_0}(B_6)} \leq C_\tau$, so the right-hand side of Equation~\eqref{eq:GC-rescaled-Codazzi} is uniformly bounded in $L^{q_0}$, in view of \eqref{eq:GC-rescaled-Codazzi} once again. The same arguments then yield that 
$$
\left\|\widetilde H_\varepsilon\right\|_{W^{1,q_0}(B_6)}\leq C_\tau.
$$
In addition, by Sobolev--Morrey embedding in dimension two,
\begin{align} \label{eq:GC-first-Holder}
\left\|\widetilde H_\varepsilon\right\|_{C^{0,\alpha}(B_5)} + \left\|\widetilde\Phi_\varepsilon\right\|_{C^{0,\alpha}(B_5)} \leq C_\tau \qquad\text{for } \alpha=1-\frac2{q_0}>0.
\end{align}

\smallskip
\noindent
{\bf Step~5.} The $W^{1,q_0}$-estimate for $\widetilde{H}_\e$ obtained in Step~4 above (where $q_0$ is possibly smaller than $p$) can be easily promoted to a $W^{1,p}$-estimate, by virtue of the H\"{o}lder bound~\eqref{eq:GC-first-Holder}.

Indeed, the positive lower bound for $\widetilde{H}_\e$ (which follows from \eqref{eq:GC-H-comparability}), the Lipschitz bound for $\widetilde{u}_\e$, and~\eqref{eq:GC-first-Holder} together imply that 
$$
\left\|\frac{\widetilde\Phi_\varepsilon}{\widetilde H_\varepsilon e^{2\widetilde u_\varepsilon}}\right\|_{C^{0,\alpha}(B_5)}\leq C_\tau.
$$
From the same elliptic estimates as in Step~4 above applied to \eqref{eq:GC-rescaled-Beltrami}, we deduce that 
$$
\left\|\widetilde\Phi_\varepsilon\right\|_{W^{1,p}(B_4)}\leq C_{p,\tau},
$$
with the constant independent of $\varepsilon$ and $k_\varepsilon(x_0)$. Substituting into~\eqref{eq:GC-rescaled-Codazzi} yields, by the elliptic estimates once again, that
\begin{align}
\left\|\widetilde H_\varepsilon\right\|_{W^{1,p}(B_3)} \leq C_{p,\tau}.
\label{eq:GC-rescaled-W1p-H}
\end{align}

\smallskip
\noindent
{\bf Step~6.} Finally, the desired estimate $
\int_{\B_{2r_0}(x_0)}|\nabla H_\varepsilon|^p\,\dd V_{g_\e}\leq C_{p,\tau}k_\varepsilon(x_0)^pr_0^{2-p}$ follows from scaling back to the original variable $z=x+iy$. The proof of Proposition~\ref{prop:GC-arbitrary-p} is now complete.    \end{proof}

\begin{remark} \label{rmk Labourie eq}
Proposition~\ref{prop:GC-arbitrary-p} can also be proved using the J-holomorphic curve formulation of the Gauss--Codazzi equations \textit{\`{a} la} Labourie~\cite{Labourie}; in particular, the PDE~\eqref{PDE for H} for $H_\e$ or~\eqref{PDE for W} for $H_{\e}^{-1}$. Here we present an alternative method using the Beltrami-type system, which has the advantange of treating the nearly umbilic region ($\kappa_{1,\e}\approx \kappa_{2,\e}$) and its complement at one strike. 
\end{remark}

\subsection{Uniform $W^{1,p}$-estimate for $H_\e$: global case} In this subsection, we shall derive a uniform bound for the $W^{1,p}$-norm of $H_\e$ over the whole domain $\Sigma$, by globalising the local estimates over small geodesic balls $\B_{2r_0(x_0)}^{g_\e}$ obtained in Proposition~\ref{prop:GC-arbitrary-p}. Such small balls degenerate near the  near the degeneracy set $\{k=0\}\cap\Sigma$: indeed, $r_0 \approx k_\e^2(x_0) \to 0^+$ as $\e \to 0^+$.

\begin{proposition}\label{prop:global-H-W1p}
Under the assumptions in Theorem~\ref{thm:conditional-W3p}, we have that
\begin{align*}
\|H_\varepsilon\|_{W^{1,p}(\Sigma,g_\varepsilon)}\leq C,
\end{align*}
where $C$ depends only on $p$, $\tau$ (the positive lower bound for $k_\e/H_\e$), and the geometries of $(\Sigma,g)$ and $(\M,\gbar)$.
\end{proposition}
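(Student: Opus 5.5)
The plan is to globalise Proposition~\ref{prop:GC-arbitrary-p} by a Vitali/Besicovitch-type covering of $\Sigma$ with balls of variable radius $r_0(x)=r_*\min\{1,k_\e(x)^2\}$. The $L^\infty$ part of the $W^{1,p}$-norm is immediate: Lemma~\ref{lem:uniform-bounds} gives $0<H_\e\le\tau^{-1}k_\e\le C_\tau$, and the volume of $(\Sigma,g_\e)$ is uniformly bounded. It therefore remains to bound $\int_\Sigma|\nabla H_\e|^p\,\dvge$ uniformly in $\e$.

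The first step is the covering. Consider the family $\{\B_{2r_0(x)}(x)\}_{x\in\Sigma}$ of $g_\e$-geodesic balls. Their radii are bounded by $2r_*$, so the Vitali covering lemma yields a countable disjoint subfamily $\{\B_{2r_0(x_i)}(x_i)\}$ with $\Sigma\subset\bigcup_i\B_{10r_0(x_i)}(x_i)$. To apply Proposition~\ref{prop:GC-arbitrary-p} on the enlarged balls, I will either choose $r_*$ smaller in advance (so the proposition holds on $\B_{10r_0}$ after rescaling; Steps~1--6 only need $\B_{sr_0}$ with $s$ a fixed constant inside the chart, and the comparability \eqref{eq:GC-k-comparability} holds on $\B_{sr_0}$ once $sC_\tau r_*\le\frac18$), or apply Vitali with radii $r_0/5$. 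In either case I obtain disjoint balls $D_i:=\B_{\rho_i}(x_i)$ with $\rho_i\approx r_0(x_i)$, the enlarged balls $5D_i$ cover $\Sigma$, and
\[
\int_{5D_i}|\nabla H_\e|^p\,\dvge\le C\,k_\e(x_i)^p\,r_0(x_i)^{2-p}.
\]

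The second step converts the right-hand side into an integral over the disjoint ball $D_i$. On $D_i$ we have $k_\e\approx k_\e(x_i)$ by \eqref{eq:GC-k-comparability}, and $|D_i|_{g_\e}\approx r_0(x_i)^2$ by Lemma~\ref{lem:uniform-isothermal-charts}. There are two cases.
\begin{itemize}
\item If $k_\e(x_i)\le1$, then $k_\e(x_i)^p r_0^{2-p}=r_*^{2-p}k_\e(x_i)^{4-p}\approx k_\e(x_i)^{-p}r_0^2\lesssim\int_{D_i}k_\e^{-p}\,\dvge$.
\item If $k_\e(x_i)\ge1$, then $r_0=r_*$ and $k_\e\le C_\tau$, so the bound is $\lesssim C\,r_*^2\lesssim|D_i|$.
\end{itemize}
In both cases the contribution of ball $i$ is at most $C\int_{D_i}(1+k_\e^{-p})\,\dvge$.

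Summing over $i$ and using disjointness of the $D_i$ gives
\[
\int_\Sigma|\nabla H_\e|^p\,\dvge\le C\int_\Sigma(1+k_\e^{-p})\,\dvge\le C\bigl(1+\|k^{-1}\|^p_{L^p(\Sigma,g)}\bigr),
\]
where the last inequality is Lemma~\ref{lem:inverse-curvature-transfer} combined with \eqref{eq:main-integral-assumptions}. This constant is independent of $\e$ and of $\inf k_\e$.

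The main obstacle I expect is the bookkeeping in the first step: the radius enlargement demanded by Vitali must stay compatible with both the comparability \eqref{eq:GC-k-comparability} and the domain of validity of Proposition~\ref{prop:GC-arbitrary-p}. Since Vitali is used instead of a bounded-overlap argument, no multiplicity constant is needed, so the only delicate point is fixing $r_*$ small enough uniformly in $\e$ and $x_0$. That is exactly the uniformity emphasised after Proposition~\ref{prop:GC-arbitrary-p}. A secondary point is that $k_\e$ is only Lipschitz in $k_\e^2$, but the bound on $\nabla(k_\e^2)$ from Lemma~\ref{lem:uniform-bounds} is precisely what Step~1 of that proof used, so it suffices.
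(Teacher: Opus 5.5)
Your proposal is correct and follows the same strategy as the paper. Both cover $\Sigma$ by $g_\e$-balls of radius comparable to $r_*\min\{1,k_\e^2\}$, apply Proposition~\ref{prop:GC-arbitrary-p} on each ball, use \eqref{eq:GC-k-comparability} to trade $k_\e(x_i)^p r_0^{2-p}$ for $\int k_\e^{-p}$ over a ball of comparable size, and finish with Lemma~\ref{lem:inverse-curvature-transfer}.

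The only difference is the covering bookkeeping. The paper takes a maximal family with disjoint $\B_{r_i/20}(x_i)$. It then uses \eqref{eq:radius-function-Lipschitz} to obtain the overlap constant $N_{1,*}$, and separately counts the centres with $k_\e\ge 1$ (the constant $N_{2,*}$). Your Vitali version instead charges each ball to its disjoint core $D_i$. This absorbs both the multiplicity and the large-$k_\e$ balls into $C\int_\Sigma(1+k_\e^{-p})\,\dvge$, so neither claim is needed. It is cleanest with radii $2r_0/5$, so that $5D_i=\B_{2r_0(x_i)}(x_i)$ and Proposition~\ref{prop:GC-arbitrary-p} applies verbatim.
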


The pinching condition~\eqref{eq:uniform-global-pinching} plays a crucial role in the above proposition. The constant $C$ is independent of $\varepsilon$ and $\inf_\Sigma k_\varepsilon$.

\begin{proof}[Proof of Proposition~\ref{prop:global-H-W1p}]

Recall that a uniform $L^\infty$-bound has been established by Lemma~\ref{lem:uniform-bounds}, so it suffices to prove a bound for $\int_\Sigma |\na H_\e|^p\,\dd V_{g^\e}$.  Throughout the proof, all geodesic balls on $\Sigma$ are taken with respect to $g_\e$, unless otherwise specified.

Let $r_* = r_*(p,\tau,\Sigma,\M)>0$ be chosen as in Proposition~\ref{prop:GC-arbitrary-p}. Define $r_\e: \Sigma \to [0,\infty[$ by
\begin{equation*}
    r_\varepsilon(x):=r_*\min\big\{1,k_\varepsilon(x)^2\big\}.
\end{equation*}
By virtue of Lemma~\ref{lem:uniform-bounds} and the choice of $r_*$, we have
\begin{align}\label{eq:radius-function-Lipschitz}
|r_\varepsilon(x)-r_\varepsilon(y)|\leq\frac{1}{32}d_{g_\varepsilon}(x,y)
\qquad\text{for all }x,y\in\Sigma.
\end{align}
The function $r_\varepsilon$ is positive and continuous on the compact surface $\Sigma \cong\stwo$ for each given $\varepsilon>0$.

Next, for each fixed $\varepsilon>0$, let us choose a maximal collection of points $\mathscr{C}_\e:=\left\{x_{i,\varepsilon}\right\}_{1 \leq i \leq N_\e}\subset\Sigma$ such that the geodesic balls $\B_{r_{i,\varepsilon}/20}(x_{i,\varepsilon})$ are pairwise disjoint. Here we put $$r_{i,\varepsilon}:=r_\varepsilon(x_{i,\varepsilon})=r_*\min\big\{1,k_\varepsilon(x_{i,\varepsilon})^2\big\}.$$ Observe that $N_\e<\infty$ by the continuity and positivity of $r_\e$ and the compactness of $\Sigma$. Maximality of the set of centres $\mathscr{C}_\e$ and the bound~\eqref{eq:radius-function-Lipschitz} imply that
$$\Sigma\subset\bigcup_{i=1}^{N_\varepsilon}\B_{\frac{r_{i,\varepsilon}}{4}}(x_{i,\varepsilon}).$$

Let us \emph{claim} that
\begin{enumerate}
    \item[(i)]
The collection of balls $\mathscr{B}_\e:=\left\{\B_{8r_{i,\varepsilon}}(x_{i,\varepsilon})\right\}_{1 \leq i \leq N_\e}$ is uniformly locally finite: There exists $N_{1,*}$ independent of $\varepsilon$ and $\inf_\Sigma k_\varepsilon$ such that every $x\in\Sigma$ is contained in at most $N_{1,*}$ members of $\mathscr{B}_\e$.
\item[(ii)]
The collection of points $x_{i,\e}$ in $\mathscr{C}_\e$ such that $k_{\e}(x_{i,\e}) \geq 1$ (equivalently, $r_{i,\e} \equiv r_\e(x_{i,\e})=r_*$) has less than $N_{2,*}$ elements, where $N_{2,*}$ is independent of $\varepsilon$ and $\inf_\Sigma k_\varepsilon$.
    
\end{enumerate}

\begin{proof}[Proof of the claim]
Indeed, for (i), observe that the intersecting balls in $\mathscr{B}_\e$ have comparable radii: For any $y \in \Sigma$ we set  $I_\varepsilon(y):=\{i:y\in \B_{8r_{i,\varepsilon}}(x_{i,\varepsilon})\}$, then $
d_{g_\varepsilon}(x_{i,\varepsilon},x_{j,\varepsilon})\leq8\bigl(r_{i,\varepsilon}+r_{j,\varepsilon}\bigr)$ for $i,j\in I_\varepsilon(y)$, which by Equation~\eqref{eq:radius-function-Lipschitz} leads to $\frac35r_{j,\varepsilon}\leq r_{i,\varepsilon}\leq\frac53r_{j,\varepsilon}$. Now, fix an arbitrary $i_0\in I_\varepsilon(y)$. The above arguments and the construction of $\mathscr{B}_\e$ yield that, for any $i\in I_\varepsilon(y)$:
\begin{itemize}
    \item 
$\B_{r_{i,\varepsilon}/20}(x_{i,\varepsilon})$ are pairwise disjoint;
\item 
$\B_{r_{i,\varepsilon}/20}(x_{i,\varepsilon})$ have radii bounded below by $3r_{i_0,\varepsilon}/100$; and
\item 
$\B_{r_{i,\varepsilon}/20}(x_{i,\varepsilon}) \subset \B_{14r_{i_0,\varepsilon}}(y)$. 
\end{itemize}
Thus, for constants $c$ and $C$ depending only on the volume forms $V_{g^\e}$ restricted to these balls (which, by Proposition~\ref{prop:curvature-compatible-approximation}, are in turn controlled by constants uniform in $\e$ and depending only on the $C^2$-geometry of $(\Sigma,g)$), we have 
\begin{align*}
\#I_\varepsilon(y)\cdot c\cdot\left(\frac{3r_{i_0,\varepsilon}}{100}\right)^2 \leq\sum_{i\in I_\varepsilon(y)}V{g_\varepsilon}\bigl(\B_{r_{i,\varepsilon}/20}(x_{i,\varepsilon})\bigr) \leq V_{g_\varepsilon}\bigl(\B_{14r_{i_0,\varepsilon}}(y)\bigr) \leq C \cdot \bigl(14r_{i_0,\varepsilon}\bigr)^2.
\end{align*}
It follows that $\# I_\e(y) \leq \frac{C}{c} \cdot \frac{1400^2}{9} =: N_{1,*}$. 

As for (ii), if $k_\e(x_{i,\e}) \geq 1$ with $x_{i,\e} \in \mathscr{C}_\e$, then each of the pairwise disjoint balls $\B_{r_{i,\e}/20}(x_{i,\e})$ has radius $\geq r_*/20$, where the parameter $r_*>0$ depends only on $p$, $\tau$, and the geometry of $\Sigma$ and $\M$ by Proposition~\ref{prop:GC-arbitrary-p}. Thus, the area of each of these balls is bounded from below by a positive constant that is independent of $\e$, $\inf_{\e}{k_\e}$, and the position of the centres. But $(\Sigma,g)$ has finite area, so the number of such balls is uniformly bounded from above.    \end{proof}

Now we can conclude the global estimate for $\|H_\e\|_{\dot{W}^{1,p}}$. Recall from  Proposition~\ref{prop:GC-arbitrary-p}:
\begin{equation*}
    \int_{\B_{2r_0}(x_0)}|\nabla H_\varepsilon|^p\,\dd V_{g_\varepsilon}\leq \begin{cases}
         C_{p,\tau}\int_{\B_{8r_0}(x_0)}k_\varepsilon^{-p}\,\dd V_{g_\varepsilon}\qquad\text{ if $k_\varepsilon(x_0)\leq1$},\\
          C_{p,\tau,r_*} \qquad\text{if $k_\varepsilon(x_0)\geq1$}.
    \end{cases}
\end{equation*}
Since the balls $\left\{\B_{2r_{i,\varepsilon}}(x_{i,\varepsilon})\right\}_{1\leq i \leq N_\e}$ cover $\Sigma$, we estimate that
\begin{align*}
\int_\Sigma|\nabla H_\varepsilon|^p\,\dd V_{g_\varepsilon}
&\leq \left(\sum_{\{i: r_{i,\e}<r_*\}} + \sum_{\{i: r_{i,\e}= r_*\}}\right)\int_{\B_{2r_{i,\varepsilon}}(x_{i,\varepsilon})}|\nabla H_\varepsilon|^p\,\dd V_{g_\varepsilon}\\
&\leq C'_{p,\tau}N_{1,*} \int_\Sigma k_\e^{-p}\,\dd V_{g^\e} + C'_{p,\tau,r_*} N_{2,*}\\
&\leq C''_{p,\tau}N_{1,*} \int_\Sigma k^{-p}\,\dd V_{g^\e} + C'_{p,\tau,r_*} N_{2,*},
\end{align*}
where the final line follows from Lemma~\ref{lem:inverse-curvature-transfer}. The right-hand side is independent of $\e$ and $\inf_\Sigma k_\e$.

The proof of Proposition~\ref{prop:global-H-W1p} is now complete.     \end{proof}

\subsection{Proof of the $W^{3,p}$-isometric immersion}

In this subsection, we pass from the $W^{1,p}$-estimate for $H_\e$ to the $W^{3,p}$-estimate for the isometric immersions, thus establishing Theorem~\ref{thm:conditional-W3p}. 

The traceless part of the second fundamental form associated with $f_\e$ is 
$$
\mathring{\mathrm{II}}_\varepsilon := \mathrm{II}_\varepsilon - H_\varepsilon g_\varepsilon.
$$
We shall make use of the following lemma:

\begin{lemma}\label{lem: ellipticity}
When viewed as a PDE over the symmetric traceless 2-tensors $\mathring{\mathrm{II}}_\varepsilon$, the Codazzi equations form an elliptic system, which degenerates only at the zero solution.
\end{lemma}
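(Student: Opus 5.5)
The plan is to rewrite the Codazzi equations in isothermal coordinates and read off the principal symbol directly from the complex form already derived in Lemma~\ref{lem: 1st order PDE for Phi}. In an isothermal chart $g_\e = e^{2u_\e}(dx^2+dy^2)$, a symmetric traceless 2-tensor is encoded by the single complex function $\Phi_\e = \frac{L_\e-N_\e}{2} - iM_\e$. The mean curvature $H_\e$ is determined algebraically by $\Phi_\e$ and $k_\e^2$ through the Gauss equation~\eqref{eq:Hopf-Gauss}: $H_\e = \sqrt{k_\e^2 + e^{-4u_\e}|\Phi_\e|^2}$, taking the positive root since $\two_\e$ is positive definite. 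Substituting this into the complex Codazzi equation~\eqref{eq:complex-Codazzi} eliminates $H_\e$. The result is exactly the Beltrami-type system~\eqref{eq:Hopf-standard-Beltrami}, a first-order quasilinear system for $\Phi_\e$ alone, with lower-order terms collected in $\mathcal F_\e$.

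Next I would compute the principal symbol. Writing the left-hand side as $\p_{\bar z}\Phi_\e + a\,\p_z\Phi_\e + b\,\p_z\overline{\Phi_\e}$ with $a=\bar b = -\overline{\Phi_\e}/(2H_\e e^{2u_\e})$, the symbol at a covector $\xi\neq 0$ acts on $\phi\in\mathbb{C}\cong\R^2$ by
$$\phi \mapsto \tfrac{i}{2}\big(\bar\xi\,\phi + a\,\xi\,\phi + b\,\xi\,\bar\phi\big),$$
where $\xi=\xi_1+i\xi_2$. This real-linear map is injective as soon as $|a|+|b|<1$, because $|\bar\xi\phi| = |\xi||\phi| > (|a|+|b|)|\xi||\phi|$. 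Since $H_\e^2 - k_\e^2 = e^{-4u_\e}|\Phi_\e|^2$, one has
$$|a|+|b| = \frac{|\Phi_\e|}{H_\e e^{2u_\e}} = \sqrt{1-\frac{k_\e^2}{H_\e^2}} < 1$$
wherever $k_\e>0$. So the system is elliptic at every state of an $\e$-elliptic immersion. Ellipticity fails only in the limit $k_\e \to 0$ with $H_\e$ bounded below relative to $|\Phi_\e|$, i.e.\ along the degenerate branch. In the degenerate regime $k_\e\equiv 0$, the quantity $|a|+|b|$ equals $1$ unless $\Phi_\e = 0$, and at $\Phi_\e = 0$ the quasilinear coefficients vanish and the operator reduces to $\p_{\bar z}$, which is elliptic. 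This is the sense of the statement: as $k_\e^2\to 0$ the Codazzi system stays elliptic and degenerates only at the zero solution. It also matches the standard fact that $\p_{\bar z}$ of the Hopf-type quantity is elliptic, i.e.\ that the Codazzi operator on traceless tensors is the conformal Killing adjoint operator.

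The main obstacle is making precise what ``degenerates only at the zero solution'' means once $H$ is eliminated via the square root. Two points need care. The linearisation must include the derivative of $H(\Phi)$ with respect to $\Phi$, and this derivative is singular where $H$ vanishes. I would handle this by linearising the equation in the form multiplied through by $H_\e$, so that the coefficients are polynomial in $(\Phi_\e,\overline{\Phi_\e},H_\e)$, and then checking the symbol bound $|a|+|b|<1$ pointwise. Under the pinching~\eqref{eq:uniform-global-pinching} this gives the quantitative version $|a|+|b|\le\sqrt{1-\tau^2}$, uniformly in $\e$, which is what the subsequent $W^{3,p}$ argument will actually use.
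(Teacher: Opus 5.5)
There is a genuine gap. You have proved a different statement, and your attempt to recover this one contradicts your own computation.

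\textbf{The quasilinear reading fails.} By eliminating $H_\e$ through the Gauss equation you are analysing the quasilinear system \eqref{eq:Hopf-standard-Beltrami}. Your bound $|a|+|b|=\sqrt{1-k_\e^2/H_\e^2}<1$ for $k_\e>0$ is correct, but it is exactly the last assertion of Lemma~\ref{lem: 1st order PDE for Phi}, and it genuinely depends on $k_\e/H_\e$.

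On $\{k_\e=0\}$ the same formula gives $|a|+|b|=|\Phi_\e|/(H_\e e^{2u_\e})=1$ at \emph{every} $\Phi_\e\neq0$, and this is a true loss of ellipticity. Write $\Phi_\e=|\Phi_\e|e^{i\theta}$; then $a=\bar b=-\tfrac12e^{-i\theta}$. With $\xi=\xi_1+i\xi_2$, the real symbol is $\phi\mapsto\tfrac12\bigl(\xi\phi+a\bar\xi\phi+b\bar\xi\bar\phi\bigr)=\tfrac12\bigl(\xi\phi-\bar\xi\,\mathrm{Re}(e^{-i\theta}\phi)\bigr)$. It annihilates $\phi=e^{i\theta}$ at $\xi=e^{-i\theta/2}$, which is the familiar parabolic degeneration of Gauss--Codazzi where $K=0$.

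So in your reading the system degenerates at all nonzero states, the opposite of ``only at the zero solution''. Your sentence ``this is the sense of the statement'' therefore does not follow. Moreover, at $\Phi_\e=0$ with $k_\e=0$ one has $H_\e=0$, so the coefficients $\overline{\Phi_\e}/(2H_\e e^{2u_\e})$ are $0/0$, not $0$. Since $|a|+|b|\equiv1$ along $\{k_\e=0\}$, there is no extension by zero. After multiplying through by $H_\e$ as you propose, the entire principal part vanishes there.

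\textbf{The missing idea: do not eliminate $H_\e$.} The lemma concerns the Codazzi system with $\na H_\e$ kept on the right-hand side as data, as in \eqref{eq:trace-free-Codazzi}. Then the principal part acting on $\mathring{\two}_\e$ is linear and independent of the solution. Write $\mathring{\two}_\e=\begin{pmatrix}a&b\\b&-a\end{pmatrix}$ in a $g_\e$-orthonormal frame. Its symbol is $(a,b)\mapsto(\xi_1b-\xi_2a,\,-\xi_1a-\xi_2b)$, and $|\xi_1b-\xi_2a|^2+|\xi_1a+\xi_2b|^2=|\xi|^2(a^2+b^2)=\tfrac12|\xi|^2|\mathring{\two}_\e|^2$.

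Hence the symbol is injective, and so an isomorphism of the two-dimensional fibres, for every $\xi\neq0$. This holds with no dependence on $k_\e$, $H_\e$ or $\tau$. The phrase ``degenerates only at the zero solution'' merely records that this symbol annihilates only $\mathring{\two}_\e=0$.

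In your isothermal notation this is just \eqref{eq:complex-Codazzi}, $\p_{\bar z}\Phi_\e=e^{2u_\e}\p_zH_\e+\mathcal E_\e$, a Cauchy--Riemann equation for $\Phi_\e$. The fact you mention in passing at the end of your second paragraph is the entire proof. It is also the form used later, where the lemma is invoked as $\|\na\mathring{\two}_\e\|_{L^p}\lesssim\|\na H_\e\|_{L^p}+\|\mathring{\two}_\e\|_{L^p}+1$ (Step~2 of the proof of Theorem~\ref{thm:conditional-W3p} and Case~1 in \S\ref{sec: strictly elliptic}). That estimate only makes sense with $\na H_\e$ treated as data, not with a $\tau$-dependent Beltrami bound for $\Phi_\e$ alone.
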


\begin{proof}[Proof of Lemma~\ref{lem: ellipticity}]
The Codazzi equations are equivalent to
\begin{align} \label{eq:trace-free-Codazzi}
(\nabla_i\mathring{\mathrm{II}}_\varepsilon)_{jk} -(\nabla_j\mathring{\mathrm{II}}_\varepsilon)_{ik} = R^\M \bigl(df_\varepsilon(\partial_i),df_\varepsilon(\partial_j), df_\varepsilon(\partial_k),\nu_\varepsilon \bigr) -(\nabla_iH_\varepsilon)(g_\varepsilon)_{jk} +(\nabla_jH_\varepsilon)(g_\varepsilon)_{ik}.
\end{align}

Write in any $g_\varepsilon$-orthonormal frame $
\mathring{\mathrm{II}}_\varepsilon = \begin{pmatrix} a&b\\ b&-a \end{pmatrix}
$. The principal symbol for the differential operator on the left-hand side of~\eqref{eq:trace-free-Codazzi} is $
(a,b) \mapsto \begin{pmatrix} \xi_1b-\xi_2a\\ -\xi_1a-\xi_2b \end{pmatrix}
$ for $\xi = (\xi_1, \xi_2)^\top \in \R^2$. The lemma follows from the identity $|\xi_1b-\xi_2a|^2+|-\xi_1a-\xi_2b|^2=(\xi_1^2+\xi_2^2)(a^2+b^2)=\frac12|\xi|^2|\mathring{\mathrm{II}}_\varepsilon|^2$.   \end{proof}

\begin{proof}[Proof of Theorem~\ref{thm:conditional-W3p}]

Let $\{g_\e\}$ be the family of smooth metrics with strictly positive Gaussian curvature approximates $g$ in $C^{2,\alpha}$ for every $0<\alpha<1$, let $f_\e$ be the $\e$-elliptic $C^3$-isometric immersion, and let $\tau>0$ be a lower bound for the ratio $k_\e/H_\e$, all as in the assumptions of this theorem. Again, unless otherwise specified, geodesic balls on $\Sigma$ are taken with respect to $g_\e$.

We divide our proof into three steps below.

\smallskip
\noindent
{\bf Step~1.} It suffices to show that for given $\varrho>0$ (independent of $\e$) and $x_0 \in \Sigma$, we have 
\begin{equation}\label{to show, local}
    \|f_\e\|_{W^{3,p}(\B_\varrho(x_0))} \leq C,
\end{equation}
where $C$ depends only $p$, $\tau$, $\varrho$, $\Sigma$, and $\M$, as well as an upper bound for $\|H_\e\|_{W^{1,p}\left(\B^{g_\e}_{2\varrho}(x_0)\right)}$. Indeed, we may choose $\varrho>0$ so small that  for any $x_0 \in \Sigma$, the union of geodesic balls $\bigcup_\e\left[\B^{g_\e}_{10^2 \varrho}(x_0)\right]$ lies in one isothermal coordinate chart. We have established in  Proposition~\ref{prop:global-H-W1p} that $\|H_\varepsilon\|_{W^{1,p}(\Sigma,g_\varepsilon)}\leq \Lambda$, where $\Lambda$ depends only on $p$, $\tau$, and the geometries of $\Sigma$ and $\M$. Thus, once~\eqref{to show, local} is proved for some $C=C(p,\varrho,\Sigma,\M,\Lambda)$, as $\Sigma$ is compact and $g_\e$ uniformly approximates $g$ by Proposition~\ref{prop:curvature-compatible-approximation}, a simple covering argument leads to $\|f_\e\|_{W^{3,p}(\Sigma,g_\e)} \leq C_{p,\tau,\Sigma,\M}$. This completes the proof.

\smallskip
\noindent
{\bf Step~2.} To show~\eqref{to show, local}, let us first notice that $$\|f_\e\|_{W^{1,\infty}(\B_{2\varrho(x_0)})} \leq C_{\varrho,\Sigma,\M},$$ since $f_\e$ are isometric immersions of uniformly $W^{3,\infty}$-bounded metrics $g_\e$, and the images of $f_\e$ are confined in a fixed compact subset of $\M$. Then, by applying the elliptic estimates to the traceless part of the second fundamental form $\mathring{\two}_\e$, we deduce that
\begin{align*}
\left\|\na \mathring{\two}_\e\right\|_{L^p\left(\B_{\varrho}(x_0)\right)} &\leq C\left(1 + \left\|\mathring{\two}_\e\right\|_{L^p\left(\B_{2\varrho}(x_0)\right)} + \|\na H_\e\|_{L^p\left(\B_{2\varrho}(x_0)\right)}\right)\\
&\leq C \left(1+ \|H_\e\|_{W^{1,p}\left(\B_{2\varrho}(x_0)\right)}\right)
\end{align*}
with the constant $C$ depending on $\varrho$, $p$, $\tau$, as well as the geometries of $\Sigma$ and $\M$. Thus, in view of Proposition~\ref{prop:curvature-compatible-approximation}, we conclude that
\begin{align*}
\left\|{\two}_\e\right\|_{W^{1,p}\left(\B_{\varrho}(x_0)\right)} \leq C_{p,\tau,\varrho,\M,\Sigma}.
\end{align*}
By the uniform $W^{2,\infty}$-boundedness of $g_\e$ and $g_\e^{-1}$, the same estimate also holds for $S_\e$, the shape operator associated with $\two_\e$, defined by $(S_\varepsilon)_i^j =(g_\varepsilon)^{jk}(\mathrm{II}_\varepsilon)_{ik}$.

\smallskip
\noindent
{\bf Step~3.} Let $\Gamma_{ij}^m(g_\varepsilon)$ and $\overline\Gamma_{\beta\gamma}^\alpha(\overline g)$ be the Christoffel symbols associated with $g_\varepsilon$ and $\overline g$, respectively. The Gauss and Weingarten equations read:
\begin{align}
\partial_{ij}f_\varepsilon^\alpha ={}& \Gamma_{ij}^m(g_\varepsilon)\partial_mf_\varepsilon^\alpha -\overline\Gamma_{\beta\gamma}^\alpha(f_\varepsilon) \partial_if_\varepsilon^\beta \partial_jf_\varepsilon^\gamma +(\mathrm{II}_\varepsilon)_{ij}\nu_\varepsilon^\alpha, \label{eq:coordinate-Gauss}\\
\partial_i\nu_\varepsilon^\alpha ={}& -(S_\varepsilon)_i^m\partial_mf_\varepsilon^\alpha -\overline\Gamma_{\beta\gamma}^\alpha(f_\varepsilon) \partial_if_\varepsilon^\beta \nu_\varepsilon^\gamma. \label{eq:coordinate-Weingarten}
\end{align}
In this step, all the constants $C_0$ are independent of $\e$.

Thanks the uniform boundedness of $\two_\e$ in $W^{1,p}$ and $f_\e$ in $W^{1,\infty}$, the $W^{2,\infty}$-boundedness of Christoffel symbols, as well as the continuous embedding $W^{1,p}(\Sigma)\emb L^q(\Sigma)$ for any $q < \infty$ and $p \geq 2$, and in light of~\eqref{eq:coordinate-Gauss}, we deduce that $\|f_\e\|_{W^{2,q}(\Sigma)} \leq C_0$ for any $q<\infty$. Then, by \eqref{eq:coordinate-Weingarten} we have that $\|\nu_\e\|_{W^{1,q}}\leq C_0$ for any $q<\infty$. Substituting this back to the Gauss Equation~\eqref{eq:coordinate-Gauss}, we find that its right-hand side is uniformly bounded in $W^{1,p}$, so we arrive at \eqref{to show, local}.

The proof of Theorem~\ref{thm:conditional-W3p} is now complete.   \end{proof}

\section{The strictly elliptic generalised Weyl problem}\label{sec: strictly elliptic}
In this section, we prove Theorem~\ref{thm:strict-C21-Weyl}, which asserts that a metric $g\in C^{2,1}$ with strictly positive relative Gaussian curvature $K_g-K_0\geq\delta_0>0$ admits an isometric immersion into $(\M,\gbar)$ of class $W^{3,p}$ for every finite $p$.

\subsection{$L^\infty$-estimate for the second fundamental form}

\subsubsection{Approximation}
Let $g \in C^{2,1}$ be a metric with  strictly positive relative Gaussian curvature. We first approximate it by a sequence of smooth metrics admitting strictly elliptic isometric immersions $f_\e : (\Sigma,g) \to (\M,\gbar)$. Labourie~\cite{Labourie} ensures that this can be done in a ``non-wandering'' manner, even if $\M$ is noncompact.

Indeed, let $\{g_\varepsilon\}_{0<\varepsilon<\varepsilon_0}$ be the metrics constructed in Proposition~\ref{prop:curvature-compatible-approximation}. Since $K_g\geq K_0+\delta_0$ and $g_\varepsilon\to g$ in $C^{2,\alpha}$, we have  $K_{g_\varepsilon}-K_0\geq\frac{\delta_0}{2}$ by shrinking $\varepsilon_0$ if necessary. For each $x_0\in\Sigma$, $y_0\in \M$, an oriented 2-dimensional subspace $P_0\subset T_{y_0}M$, and an orientation-preserving linear isometry $
I_\varepsilon: (T_{x_0}\Sigma,g_\varepsilon(x_0)) \longrightarrow (P_0,\overline g(y_0))$, by~\cite[Th\'eor\`eme~B and \S7]{Labourie} there exists a unique $C^\infty$-orientation-preserving strictly elliptic isometric immersion 
\begin{align} \label{eq:strict-normalized-embeddings}
&f_\varepsilon:(\Sigma,g_\varepsilon) \longrightarrow(M,\overline g)\quad \text{ such that }\quad
f_\varepsilon(x_0)=y_0,\,\, df_\varepsilon|_{x_0}=I_\varepsilon,\nonumber\\
&\text{and each $f_\varepsilon$ is an embedding onto the boundary of a convex subset of $\M$.}
\end{align}

Set $$R_0:=\sup_{0<\varepsilon<\e_0}\operatorname{diam}_{g_\varepsilon}(\Sigma)<\infty.$$ Since $f_\varepsilon(x_0)=y_0$ and $f_\varepsilon$ is isometric, one has $d_{\overline g}(y_0,f_\varepsilon(x))\leq d_{g_\varepsilon}(x_0,x)\leq R_0$ for every $x\in\Sigma$. The geodesic ball  $\overline \B^{\overline g}_{R_0}(y_0)$ is compact and contains every $f_\varepsilon(\Sigma)$. We may thus apply the isothermal coordinates in \S\ref{subsec: unif bd} to $\mc:=\overline \B^{\overline g}_{R_0}(y_0)$. In particular,
\begin{align} \label{eq:strict-relative-lower-bound}
\frac{\delta_0}{2} \leq K_{g_\varepsilon}-K_0 &\leq k_\varepsilon^2 := K_{g_\varepsilon} -\sec_{\overline g}\bigl(df_\varepsilon(T\Sigma)\bigr)\nonumber\\
&\leq \|K_{g_\varepsilon}\|_{L^\infty(\Sigma)} + \sup_{y\in\mc}\bigl|\sec_{\overline g}(y)\bigr| \leq C.
\end{align} 

\subsubsection{Bound for total mean curvature}
By, \emph{e.g.}, Labourie~\cite[Proposition~5.4(iii) and the final paragraph of the proof of Th\'eor\`eme~A]{Labourie} and also \cite{h60,Schulz90,LuWeyl}, we have the Minkowski formula: 
\begin{proposition}\label{prop:strict-total-mean-curvature}
The mean curvatures $H_\varepsilon$ of the isometric immersions $f_\varepsilon$ in \eqref{eq:strict-normalized-embeddings} satisfy
\begin{align}\label{eq:strict-total-mean-curvature}
\sup_\varepsilon\int_\Sigma H_\varepsilon\,\dd V_{g_\varepsilon}\leq M_0,
\end{align}
where $M_0$ is independent of $\varepsilon$.
\end{proposition}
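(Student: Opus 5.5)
We obtain the bound from a Minkowski-type integral formula, using a global convex function on the compact region $\mc$ as a substitute for the Euclidean position vector. The first step is to construct, on a neighbourhood of $\mc=\overline\B^{\gbar}_{R_0}(y_0)$, a smooth function $\phi$ whose Hessian is uniformly positive, $\overline\nabla^2\phi\geq c_1\gbar$, and which has bounded gradient, $|\overline\nabla\phi|_{\gbar}\leq C_1$. Both constants should depend only on $R_0$ and on bounds for $\sec_{\gbar}$ over $\mc$. When $\sec_\gbar\leq K_0\leq 0$ and $\M$ is simply connected, Cartan--Hadamard lets us take $\phi=\frac12 d_{\gbar}(y_0,\cdot)^2$, whose Hessian comparison gives $\overline\nabla^2\phi\geq\gbar$. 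When $K_0>0$, we instead use $\phi=1-\cos(\sqrt{K_0}\,d_\gbar(y_0,\cdot))$. This choice works provided the image stays inside the convexity radius $\pi/(2\sqrt{K_0})$. Labourie's construction in \cite[\S7]{Labourie}, which yields embeddings onto boundaries of convex sets, gives exactly such a convex function near $f_\e(\Sigma)$, and we will quote it rather than re-derive it.

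The second step is to compute $\Delta_{g_\e}(\phi\circ f_\e)$. By the Gauss formula,
\[
\Delta_{g_\e}(\phi\circ f_\e)=\operatorname{tr}_{g_\e}\big(f_\e^\#\overline\nabla^2\phi\big)+2H_\e\, d\phi(\nu_\e).
\]
Integrating over the closed surface $\Sigma$ makes the left-hand side vanish. This gives
\[
2c_1\,{\rm Area}_{g_\e}(\Sigma)\leq -2\int_\Sigma H_\e\, d\phi(\nu_\e)\,\dvge .
\]
On its own this bounds only a weighted total mean curvature, and $d\phi(\nu_\e)$ need not have a sign. So the third step is the standard second Minkowski identity, obtained by integrating the divergence of the tangential field $\big(S_\e-2H_\e\,{\rm Id}\big)\big(\nabla(\phi\circ f_\e)\big)$. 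Its divergence equals $2k_\e^2\, d\phi(\nu_\e)$, plus $\operatorname{tr}\big((S_\e-2H_\e{\rm Id})\circ f_\e^\#\overline\nabla^2\phi\big)$, plus the Codazzi error terms $R^\M(\cdot,\cdot,\cdot,\nu_\e)$ paired with $\nabla\phi$. The middle term is $\leq -c_1 H_\e$ because $S_\e>0$, and the curvature errors are bounded by $C$ since $|\nabla\phi|\leq C_1$ and $R^\M$ is bounded on $\mc$.

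Integrating this identity therefore yields
\[
c_1\int_\Sigma H_\e\,\dvge\leq C+2\int_\Sigma k_\e^2\,|d\phi(\nu_\e)|\,\dvge\leq C\,\big(1+{\rm Area}_{g_\e}(\Sigma)\big),
\]
using $k_\e^2\leq C$ from \eqref{eq:strict-relative-lower-bound}. Since ${\rm Area}_{g_\e}(\Sigma)$ is uniformly bounded, this proves \eqref{eq:strict-total-mean-curvature}. The key point is that the sign of $S_\e-2H_\e{\rm Id}=-\jone S_\e\jone^{-1}$, which is negative definite, converts the unknown $H_\e$ into the bounded quantity $k_\e^2$.

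The main obstacle is the first step in the case $K_0>0$. There we need the whole image $f_\e(\Sigma)$ to lie in a region on which a uniformly convex $\phi$ exists. This requires a diameter bound for the image, smaller than $\pi/(2\sqrt{K_0})$, that does not depend on $\e$. I expect to obtain it from the convexity of the body bounded by $f_\e(\Sigma)$ together with the comparison $k_\e^2\geq\delta_0/2$, using a Bonnet--Myers-type argument on the convex body, as in Labourie's Théorème A.
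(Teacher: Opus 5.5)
Your integral identity is correct. With $T_\e:=2H_\e{\rm Id}-S_\e=\jone S_\e(\jone)^{-1}>0$ and $u_\e:=\phi\circ f_\e$, one has $\operatorname{div}(T_\e\na u_\e)=(\operatorname{div}T_\e)(\na u_\e)+\operatorname{tr}\bigl(T_\e\circ f_\e^\#\overline\nabla^2\phi\bigr)+2k_\e^2\,d\phi(\nu_\e)$. For your field $-T_\e\na u_\e$ the $k_\e^2$-term therefore carries a minus sign, which is harmless since you take absolute values. Integrating gives $2c_1\int_\Sigma H_\e\,\dvge\le C\,{\rm Area}_{g_\e}(\Sigma)$ directly, so your second step is never used. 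For $K_0\le 0$, Cartan--Hadamard and Hessian comparison give $\phi=\frac12 d_\gbar(y_0,\cdot)^2$ with $\overline\nabla^2\phi\ge\gbar$ and $|\overline\nabla\phi|_\gbar\le R_0$ on $\mc$. In that case you have a complete, self-contained Heinz-type Minkowski argument. The paper, by contrast, gives no argument and simply quotes Labourie [Proposition~5.4(iii) and the end of the proof of Th\'eor\`eme~A], together with Heinz, Schulz and Lu.

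The genuine gap is the case $K_0>0$, and the fix you propose does not work. Bonnet--Myers only yields ${\rm diam}_{g_\e}(\Sigma)\le\pi/\sqrt{K_0+\delta_0/2}$, which can be nearly twice the threshold $\pi/(2\sqrt{K_0})$. In fact the diameter bound you ask for is false. In the unit $\mathbf{S}^3$ (so $K_0=1$), a geodesic sphere of radius $\rho\in\,]\pi/4,\pi/2[$ has $K_g=\sin^{-2}\rho>1$ but extrinsic diameter $2\rho>\pi/2$. Since there $\overline\nabla^2\bigl(1-\cos d_\gbar(y_0,\cdot)\bigr)=\cos d_\gbar(y_0,\cdot)\,\gbar$, your $\phi$ centred at a point $y_0$ of the image is concave on part of the image.

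What the argument really needs is a circumradius bound: $f_\e(\Sigma)\subset\B_\rho(p)$ for a well-chosen centre $p$, with $\rho\le\pi/(2\sqrt{K_0})-\eta$ uniformly in $\e$. It also needs $\rho$ below the injectivity radius of $(\M,\gbar)$ at $p$, since otherwise $d_\gbar(p,\cdot)$ is not even smooth there. The hypotheses (complete, simply connected, $\sec_\gbar\le K_0$) give no control of the injectivity radius. For example, Berger spheres with short Hopf fibres are complete and simply connected with $\sec\le 4$, but have arbitrarily small injectivity radius. Nor does the fact that $f_\e(\Sigma)$ bounds a convex set by itself furnish a uniformly convex function with bounded gradient and $\e$-independent constants, so ``quoting Labourie's \S7'' for $\phi$ is not justified.

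As it stands, for $K_0>0$ your proof needs an extra hypothesis or a different argument. The extra hypothesis would be a strictly convex function with bounded gradient on a neighbourhood of $\mc$, which is exactly the assumption under which the introduction says Heinz's mean curvature estimate holds. The alternative is an argument such as the one the paper imports from Labourie.
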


\subsubsection{$L^\infty$-bound for $\two_\e$} For strictly elliptic isometric immersions, an $L^1$-bound for the mean curvature implies an $L^\infty$-bound for the second fundamental form. Lu's arguments in~\cite{LuWeyl} may be directly adapted to yield the following:

\begin{proposition} \label{prop:strict-H-and-pinching}
For the isometric immersions $f_\varepsilon$ as in \eqref{eq:strict-normalized-embeddings}, there are uniform  constants $\Lambda_0, c_1,C_1,\tau_0>0$ and $\alpha \in ]0,1[$ (all independent of $\e$ and $\inf_\Sigma k_\e$), such that
\begin{align*}
&\sup_{0<\varepsilon<\e_0} \left\{\|S_\varepsilon\|_{C^{0,\alpha}(\Sigma)}+\|H_\varepsilon\|_{C^{0,\alpha}(\Sigma)}+\left\|\frac{\kappa_{2,\varepsilon}-\kappa_{1,\varepsilon}}{\kappa_{1,\varepsilon}+\kappa_{2,\varepsilon}}\right\|_{C^{0,\alpha}(\Sigma)}\right\} \leq \Lambda_0,\\
&0<c_1\leq\kappa_{1,\varepsilon}\leq\kappa_{2,\varepsilon}\leq C_1\quad\text{and}\quad\frac{k_\varepsilon}{H_\varepsilon}\geq\tau_0 \qquad\text{on }\Sigma.
\end{align*} 
\end{proposition}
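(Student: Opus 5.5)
The plan is to adapt Lu's $C^2$ estimate to the present setting, using Labourie's pseudo-holomorphic framework where it helps. There are three steps: first bound $H_\e$ pointwise, then derive the principal curvature bounds and pinching algebraically, and finally get the Hölder estimates from the Beltrami system of Lemma~\ref{lem: 1st order PDE for Phi}. Throughout, the metrics $g_\e$ are uniformly $W^{3,\infty}$, $\overline g$ is fixed on the compact set $\mc$, and \eqref{eq:strict-relative-lower-bound} gives $\delta_0/2\le k_\e^2\le C$ uniformly.

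\textbf{Step 1: a uniform bound $\sup_\Sigma H_\e\le C$.} I would run a maximum-principle argument in the spirit of Heinz, Lu and Labourie. Take a point where $H_\e$, or a test function such as $\log\kappa_{2,\e}$ plus a lower-order correction, attains its maximum. Commute covariant derivatives using the Codazzi equations in the form \eqref{eq:trace-free-Codazzi} and the Gauss equation $k_\e^2=\kappa_{1,\e}\kappa_{2,\e}$. The Simons-type identity for $\Delta\two_\e$ then produces a good term of order $k_\e^2 H_\e\ge \tfrac{\delta_0}{2}H_\e$, plus error terms involving $\nabla R^{\M}$ and $\nabla^2K_{g_\e}$. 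Only $W^{3,\infty}$ control on $g_\e$ and $\overline g$ is needed here, which matches the hypotheses. If the resulting estimate only gives $\sup H_\e\le C(1+\int_\Sigma H_\e)$, I close it with Proposition~\ref{prop:strict-total-mean-curvature}. A fallback for turning the $L^1$ bound into an $L^\infty$ bound is the Labourie viewpoint: the $1$-jets $j_1f_\e$ are $\J_\e$-holomorphic curves (Lemma~\ref{lem: one-jet is J-hol}) into $\V_\e$, and $\V_\e$ is calibrated, with energy controlled by $\int_\Sigma H_\e$. A bubbling/Schwarz-lemma argument then rules out concentration of $H_\e$, since a bubble would give a complete elliptic surface with $k^2\ge\delta_0/2$ but unbounded curvature, which is impossible. \emph{This is the main obstacle}: with only $C^{2,1}$ data, the classical $C^4$ computation loses one derivative, so I must check that all third derivatives of $g_\e$ enter linearly and are bounded uniformly.

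\textbf{Step 2: the pointwise consequences.} From Step 1, $\kappa_{2,\e}\le 2H_\e\le C_1$. Then $\kappa_{1,\e}=k_\e^2/\kappa_{2,\e}\ge \delta_0/(2C_1)=:c_1$. The pinching $k_\e/H_\e\ge \sqrt{\delta_0/2}/C_1=:\tau_0$ follows at once. In particular the pinching hypothesis \eqref{eq:uniform-global-pinching} holds, so Lemma~\ref{lem:uniform-bounds} applies and gives $|\nabla(k_\e^2)|\le C$.

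\textbf{Step 3: Hölder bounds.} Now Lemma~\ref{lem: 1st order PDE for Phi} gives a uniformly elliptic Beltrami system for $\Phi_\e$, with $L^\infty$ source $\mathcal F_\e$, since $k_\e$ is bounded below. Meyers' estimate, applied exactly as in Steps~4--5 of Proposition~\ref{prop:GC-arbitrary-p} but now with $r_0=r_*$ fixed, yields uniform $W^{1,q_0}$ bounds on $\Phi_\e$ and $H_\e$ in each isothermal chart. By Morrey's inequality these become $C^{0,\alpha}$ bounds, and hence $C^{0,\alpha}$ bounds on $S_\e$. Finally, $\frac{\kappa_{2,\e}-\kappa_{1,\e}}{\kappa_{1,\e}+\kappa_{2,\e}}=\sqrt{1-k_\e^2/H_\e^2}$. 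This is a Lipschitz function of $(k_\e,H_\e)$ on the compact range where $k_\e$, $H_\e$ and $k_\e/H_\e$ are bounded away from $0$, so it inherits the $C^{0,\alpha}$ bound.
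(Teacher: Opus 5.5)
Your Steps~2 and~3 are essentially fine, but Step~1, which carries the whole content of the proposition, has a genuine gap. Step~3 is a reasonable, more self-contained substitute for the paper's appeal to Lu's Heinz--Lewy argument: it reuses the Beltrami system and Meyers' estimate exactly as in Proposition~\ref{prop:GC-arbitrary-p}.

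The gap is this. Any maximum-principle bound for $H_\e$ (or $\log\kappa_{2,\e}$) has to use the Gauss equation $\kappa_{1,\e}\kappa_{2,\e}=k_\e^2$ differentiated twice. That produces $\nabla^2(k_\e^2)$, i.e.\ $\nabla^2K_{g_\e}$ together with second derivatives of the ambient curvature along $f_\e$. These are \emph{fourth} derivatives of $g_\e$ and $\gbar$. You list $\nabla^2K_{g_\e}$ among your error terms yourself, so the claim that only $W^{3,\infty}$ control is needed is inconsistent. Since $g\in C^{2,1}$ only, $\|g_\e\|_{C^4}$ is not bounded uniformly in $\e$. At a maximum point you cannot integrate by parts to shift that derivative. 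This is exactly why Weyl's mean-curvature bound is in terms of four derivatives of the metric, and why the $C^3$ and lower-regularity theories (Heinz, Lu) use a different method. The paper gets the $L^\infty$ bound on $\two_\e$ by quoting Lu's Heinz-type interior estimates. Their hypotheses are only the determinant lower bound \eqref{eq:strict-coordinate-Gauss-determinant}, uniform $C^{2,1}$ bounds on the coordinate metrics, and the total mean curvature bound of Proposition~\ref{prop:strict-total-mean-curvature}. No maximum principle is involved.

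Your fallback does not close the gap either. Suppose $H_\e(x_\e)=\lambda_\e\to\infty$ and you blow up at scale $\lambda_\e^{-1}$. The relative curvature becomes $k_\e^2/\lambda_\e^2\to0$. So the limit is a complete \emph{flat} surface in Euclidean space with $H=1$ at the base point, for instance a round cylinder. That is the local model of Labourie's collapse of elliptic surfaces onto geodesics, not an elliptic surface with $k^2\ge\delta_0/2$, so there is no immediate contradiction. Ruling out this degeneration is the substance of Labourie's compactness theorem. That theorem is formulated for $C^\infty$-convergent induced metrics, whereas here $\J_\e$ is only uniformly Lipschitz (it involves $k_\e$, hence $K_{g_\e}$). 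Adapting it would be real additional work, not a routine step.

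A minor point in Step~3: $\sqrt{1-s^2}$ is not Lipschitz at $s=1$, i.e.\ at umbilic points where $k_\e/H_\e=1$. Write the ratio instead as $e^{-2u_\e}|\Phi_\e|/H_\e$, which follows from \eqref{eq:Hopf-Gauss}. Alternatively, use the Lipschitz dependence of eigenvalues on $S_\e$. Either way you keep the exponent $\alpha$.
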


The norms are taken with respect to  $g_\e$, and the relevant bounds are all uniform in $\e$. 

\begin{proof}[Proof of Proposition~\ref{prop:strict-H-and-pinching}]
From the Gauss equation, we have 
\begin{equation}
\label{eq:strict-coordinate-Gauss-determinant}
\det\bigl((\mathrm{II}_\varepsilon)_{ab}\bigr) = k_\varepsilon^2 \det\bigl((g_\varepsilon)_{ab}\bigr) \geq c_0>0,
\end{equation}
where $c_0$ is independent of $\varepsilon$. The coordinate metrics are uniformly elliptic and have uniform $C^{2,1}$-bounds in each fixed chart. This together with Proposition~\ref{prop:strict-total-mean-curvature} verifies the hypotheses for Lu's interior estimates in~\cite[Proposition~4.1, Lemma~2.5, and the proof of Theorem~4.2]{LuWeyl}, which yields the uniform $L^\infty$-bounds for $\two_\e$.

Then, note that $\kappa_{2,\varepsilon}\leq2H_\varepsilon\leq2\Lambda_0$. Since $\kappa_{1,\varepsilon}\kappa_{2,\varepsilon}=k_\varepsilon^2$ and $k_\varepsilon^2\geq\delta_0/2$, we thus have $\kappa_{1,\varepsilon}\geq\delta_0/(4\Lambda_0)$. Meanwhile, using the estimate $H_\varepsilon\leq\Lambda_0$ just obtained, we deduce $k_\varepsilon/H_\varepsilon\geq\sqrt{\delta_0/2}/\Lambda_0$. This proves the final line.

It remains to promote the $L^\infty$-bound for $\two_\e$ to the $C^{0,\alpha}$-estimates for the shape operator and the principal curvature gap. This follows from the ``Heinz--Lewy argument'' as in Lu~\cite[proof of Theorem~4.2, in particular (4.11)]{LuWeyl}. Indeed, the above cited result estimates $\|S_\e\|_{C^{0,\alpha}(\Sigma)}$ for some $\alpha \in ]0,1[$ by a constant depending only on the determinant lower bound~\eqref{eq:strict-coordinate-Gauss-determinant}, the bounds for $g_\varepsilon$ and $\overline g$ on $\mc$, the total mean curvature estimate~\eqref{eq:strict-total-mean-curvature}, and the pointwise lower bound for $\mathrm{II}_\varepsilon$; but all these quantities have been proved to be uniform in $\e$. Moreover, as ${\kappa_{1,\e}}$ and  $\kappa_{2,\e}$ are eigenvalues of the symmetric matrix field $S_\e$, and their sum (which is $2H_\e$) have been proved bounded from below, so the $C^{0,\alpha}$-estimate for $\frac{\kappa_{2,\e}-\kappa_{1,\e}}{{\kappa_{2,\e}}+\kappa_{1,\e}}$ follows from that for $S_\e$.   \end{proof}

\subsection{Resolving $C^{2,1}$-strictly elliptic Weyl problem via J-holomorphic curve}

We shall conclude Theorem~\ref{thm:strict-C21-Weyl} by making use of the J-holomorphic curve formulation for the generalised Weyl problem \textit{\`{a} la} Labourie~\cite{Labourie}. Recall the following equations for the mean curvature $H_\e$ or its inverse $W_\e:=H_{\e}^{-1}$ from \S\ref{subsubsec: J-hol}:
\begin{align}
dH_\varepsilon\circ J_\varepsilon^{\mathrm{II}} &= H_\varepsilon\beta_\varepsilon+2\left(H_\varepsilon^2-k_\varepsilon^2\right)\pi_\Sigma^\#\omega_\varepsilon,\label{eq:strict-Labourie-H}\\
dW_\varepsilon\circ J_\varepsilon^{\mathrm{II}} &= -W_\varepsilon\beta_\varepsilon-2\left(1-k_\varepsilon^2W_\varepsilon^2\right)\pi_\Sigma^\#\omega_\varepsilon.\label{eq:strict-Labourie-W}
\end{align}
Also, recall the uniform-in-$\e$ estimates from the same section: \begin{align}\label{eq:strict-Labourie-form-bounds}
\|\beta_\varepsilon\|_{L^\infty(\Sigma,g_\varepsilon)}&\leq C,\qquad
|\pi_\Sigma^\#d\omega_\varepsilon|_{g_\varepsilon}\leq Ck_\varepsilon W_\varepsilon \leq C'.
\end{align}
The final inequality holds by  $k_\varepsilon W_\varepsilon=k_\varepsilon/H_\varepsilon\leq1$, which follows from the arithmetic mean-geometric mean inequality. All H\"older norms below are computed in a fixed finite atlas.

Observe that the quantity on the right-hand side of \eqref{eq:strict-Labourie-H},
\begin{align*}
    H_\e^2 - k_\e^2 = \frac{\left( \kappa_{2,\e} -\kappa_{1,\e} \right)^2}{4},
\end{align*}
may play a crucial role. Thus, it is natural to argue separately for the nearly umbilical region ($\kappa_{2,\e}-\kappa_{1,\e} \approx 0$) and its complement. In the proof below, when $(\kappa_{2,\varepsilon}-\kappa_{1,\varepsilon})/(\kappa_{1,\varepsilon}+\kappa_{2,\varepsilon})$ is large, we divide \eqref{eq:strict-Labourie-W} by $1-k_\varepsilon^2W_\varepsilon^2$ and take the exterior derivative to obtain a uniformly elliptic equation for $W_\varepsilon$ in divergence form; when the ratio is small, we use \eqref{eq:strict-Labourie-H} instead and tackle the term $\pi_\Sigma^\#\omega_\varepsilon$ by estimating $\mathring{\two}_\e$ via the Codazzi equations.

\begin{proof}[Proof of Theorem~\ref{thm:strict-C21-Weyl}]
Fix any $p \in [2,\infty[$. As in Step~3 in the proof of Theorem~\ref{thm:conditional-W3p}, once we obtain the uniformly local $W^{1,p}$-bound for $H_\e$, then Theorem~\ref{thm:strict-C21-Weyl} readily follows.  More precisely, assume that for some $r_\star>0$ and $C>0$ both independent of $\e$ and $x_0\in\Sigma$, one has
\begin{equation}\label{final, to prove}
\|H_\e\|_{W^{1,p}\left(\B^{g_\e}_{r_\star}(x_0)\right)} \leq C.
\end{equation}
By a covering argument, we then have the uniform bound $\|H_\e\|_{W^{1,p}(\Sigma)} \leq C$. Hence, by bootstraping via the Gauss and Weingartan equations~\eqref{eq:coordinate-Gauss} and \eqref{eq:coordinate-Weingarten} as in the proof of Theorem~\ref{thm:conditional-W3p}, Step~3, we deduce that $f_\e$, the isometric immersions corresponding to $g_\e$, have a uniform $W^{3,p}$-bound. This completes the proof.

Now let us prove~\eqref{final, to prove}. Take a parameter $\delta_p \in ]0,1/4[$, which depends only on $p$ and is to be specified later. Then, by Proposition~\ref{prop:strict-H-and-pinching}, there exists $r_p>0$ independent of $\e$, such that 
\begin{equation}\label{a}
    \left|\frac{\kappa_{2,\varepsilon}(y)-\kappa_{1,\varepsilon}(y)}{\kappa_{1,\varepsilon}(y)+\kappa_{2,\varepsilon}(y)}-\frac{\kappa_{2,\varepsilon}(x)-\kappa_{1,\varepsilon}(x)}{\kappa_{1,\varepsilon}(x)+\kappa_{2,\varepsilon}(x)}\right|\leq\frac{\delta_p}{2}\qquad\text{for any $x,y\in\Sigma$ with $d_{g_\varepsilon}(x,y)\leq4r_p$}.
\end{equation}
After reducing $r_p$ if necessary, $\B^{g_\e}_{4r_p}(x)$ for each $x \in \Sigma$ lies in a single chart from the fixed atlas. 

In the sequel, we consider two cases separately. Again, unless otherwise specified, all the geodesic balls are taken with respect to the metric $g_\e$.

\smallskip
\noindent
{\bf Case~1: $\frac{\kappa_{2,\varepsilon}(x)-\kappa_{1,\varepsilon}(x)}{\kappa_{1,\varepsilon}(x)+\kappa_{2,\varepsilon}(x)}\leq\frac{3\delta_p}{2}$.} In this case, by \eqref{a} we have
\begin{align}\label{2delta p}
\frac{\kappa_{2,\varepsilon}(y)-\kappa_{1,\varepsilon}(y)}{\kappa_{1,\varepsilon}(y)+\kappa_{2,\varepsilon}(y)}\leq 2\delta_p\qquad\text{for all } y \in \B_{4r_p}(x).
\end{align}
By \S\ref{subsubsec: J-hol}, one has the estimates via the traceless part of the shape operator:
\begin{align}\label{eq:strict-weighted-connection}
\left|\left(H_\varepsilon^2-k_\varepsilon^2\right)\pi_\Sigma^\#\omega_\varepsilon\right| &\leq C\sqrt{H_\varepsilon^2-k_\varepsilon^2}\,\left|\nabla \left( S_\e - H_\e\, {\rm Id}\right)\right| \leq CH_\varepsilon\frac{\kappa_{2,\varepsilon}-\kappa_{1,\varepsilon}}{\kappa_{1,\varepsilon}+\kappa_{2,\varepsilon}}|\nabla S_\varepsilon|.
\end{align}

We proceed with an energy estimate. Choose a cutoff function $\chi\in C_c^\infty\left(\B_{3r_p}(x)\right)$ such that $\chi\equiv 1$ on $\B_{2r_p}(x)$ and $|\nabla\chi|\leq C/r_p$. Taking the wedge product with $\chi^p|\nabla H_\varepsilon|^{p-2}dH_\varepsilon$ on both sides of Equation~\eqref{eq:strict-Labourie-H} and integrating over $\B_{3r_p}(x)$, we deduce that 
\begin{align}\label{b}
c\int_{\Sigma}\chi^p|\nabla H_\varepsilon|^p\,\dd V_{g_\e}
&\leq C\int_{\Sigma}\chi^p|\nabla H_\varepsilon|^{p-1}\,\dd V_{g_\e}+C\int_{\Sigma}\chi^p\frac{\kappa_{2,\varepsilon}-\kappa_{1,\varepsilon}}{\kappa_{1,\varepsilon}+\kappa_{2,\varepsilon}}|\nabla S_\varepsilon||\nabla H_\varepsilon|^{p-1}\,\dd V_{g_\e}\nonumber\\
&\leq C\int_{\Sigma}\chi^p|\nabla H_\varepsilon|^{p-1}\,\dd V_{g_\e} +C\delta_p\int_{\Sigma}\chi^p|\nabla S_\varepsilon||\nabla H_\varepsilon|^{p-1}\,\dd V_{g_\e}\nonumber\\
&\leq \frac{c}{4}\int_{\Sigma}\chi^p|\nabla H_\varepsilon|^p\,\dd V_{g_\e}+C_p\delta_p^p\int_{\Sigma}\chi^p|\nabla S_\varepsilon|^p\,\dd V_{g_\e}+C_{p,r_p},
\end{align}
Thanks to the bounds in  \eqref{eq:strict-weighted-connection}, \eqref{2delta p}, and \eqref{eq:strict-Labourie-form-bounds}, Proposition~\ref{prop:strict-H-and-pinching}, as well as Young's inequality.

Now, consider as before the traceless part of the second fundamental form:
$\mathring{\mathrm{II}}_\varepsilon=\mathrm{II}_\varepsilon-H_\varepsilon g_\varepsilon$. Applying the first-order elliptic estimate for the trace-free Codazzi system~\eqref{eq:trace-free-Codazzi}, we deduce that 
\begin{align*}
\|\nabla(\chi\mathring{\mathrm{II}}_\varepsilon)\|_{L^p}
&\leq C_p\left(\|\chi\nabla H_\varepsilon\|_{L^p}+\|(\nabla\chi)\mathring{\mathrm{II}}_\varepsilon\|_{L^p}+\|\chi\mathring{\mathrm{II}}_\varepsilon\|_{L^p}+1\right)\\
&\leq C_p\|\chi\nabla H_\varepsilon\|_{L^p}+C_p\left(1+\|\nabla\chi\|_{L^\infty}\right)
\end{align*}
where $C_p$ is independent of $\chi$, $r_p$, $x$, and $\varepsilon$. This together with~\eqref{b} yields that 
\begin{align*}
\frac{3c}{4}\int_{\Sigma}\chi^p|\nabla H_\varepsilon|^p\,\dd V_{g_\e}
\leq C_p\delta_p^p\int_{\Sigma}\chi^p|\nabla H_\varepsilon|^p\,\dd V_{g_\e}+C_{p,r_p}.
\end{align*}
The constants $c$ and $C_p$ are independent of the choice of $\delta_p$ and the corresponding radius $r_p$. 
We may therefore choose $\delta_p$ so that $$C_p\delta_p^p\leq\frac{c}{4}.$$
We thus arrive at the desired $W^{1,p}$-bound for $H_\e$:
$$
\|\nabla H_\varepsilon\|_{L^p(\B_{2r_p}(x))}\leq C_{p,r_p}.
$$

\smallskip
\noindent
{\bf Case~2: $\frac{\kappa_{2,\varepsilon}(x)-\kappa_{1,\varepsilon}(x)}{\kappa_{1,\varepsilon}(x)+\kappa_{2,\varepsilon}(x)} > \frac{3\delta_p}{2}$.} In this case, 
\begin{align*}
    1-k_\varepsilon^2W_\varepsilon^2=\left(\frac{\kappa_{2,\varepsilon}-\kappa_{1,\varepsilon}}{\kappa_{1,\varepsilon}+\kappa_{2,\varepsilon}}\right)^2\geq\delta_p^2\qquad\text{on $\B_{4r_p}(x)$},
\end{align*}
so the PDE~\eqref{eq:strict-Labourie-W} for $W_\e = H^{-1}_\e$ may be recast into
\begin{align}\label{eq:strict-fixed-scale-W-divergence}
-d\left(\frac{dW_\varepsilon\circ J_\varepsilon^{\mathrm{II}}}{1-k_\varepsilon^2W_\varepsilon^2}\right) &= d\left(\frac{W_\varepsilon\beta_\varepsilon}{1-k_\varepsilon^2W_\varepsilon^2}\right)+2\pi_\Sigma^\#d\omega_\varepsilon.
\end{align}
In view of Proposition~\ref{prop:strict-H-and-pinching} and the lower bound for $1-k_\varepsilon^2W_\varepsilon^2$, this is a uniformly elliptic PDE with uniformly $C^{0,\alpha}$-bounded coefficients; the source terms are controlled by \eqref{eq:strict-Labourie-form-bounds} and $W_\varepsilon\leq c_1^{-1}$, \emph{i.e,}, the uniform $L^\infty$-bound for $H_\e$. Thus, by the $W^{1,p}$-estimates for uniformly elliptic divergence-form equations with H\"{o}lder continuous coefficients, we arrive at   
\begin{align*}
&\|\nabla W_\varepsilon\|_{L^p(\B_{2r_p}(x))}\notag\\
\leq& C_{p,\delta_p}\left\{r_p^{-1}\|W_\varepsilon\|_{L^p(\B_{3r_p}(x))}+\left\|\frac{W_\varepsilon\beta_\varepsilon}{1-k_\varepsilon^2W_\varepsilon^2}\right\|_{L^p(\B_{3r_p}(x))}+r_p\|\pi_\Sigma^\#d\omega_\varepsilon\|_{L^p(\B_{3r_p}(x))}\right\}\notag\\
\leq& C_{p,r_p,\delta_p}.
\end{align*}
See, \emph{e.g.}, Astala--Iwaniec--Martin~\cite[Sections~14.5 and~16.1]{AIM}. But $dH_\varepsilon=-H_\varepsilon^2dW_\varepsilon$ and $H_\varepsilon$ is uniformly bounded, so one also obtains the desired uniform $L^p$-bound for $\nabla H_\varepsilon$ on $\B_{2r_p}(x)$.

In both cases, we have established the bound~\eqref{final, to prove}, where $r_\star = r_p$ is independent of $\e$ and the location of  centres of the geodesic balls. The proof of Theorem~\ref{thm:strict-C21-Weyl} is therefore complete.   \end{proof}



	\bigskip
	\noindent
	{\bf Acknowledgement}. 
	The research of SL is supported by NSFC Projects 12201399, 12331008, and 12411530065, Young Elite Scientists Sponsorship Program by CAST 2023QNRC001, National Key Research $\&$ Development Programs 2023YFA1010900 and 2024YFA1014900, Shanghai Rising-Star Program 24QA2703600, Qi-Guang Scholarship, and the Shanghai Frontier Research Institute for Modern Analysis.  The research of XS is partially supported by National Key Research $\&$ Development Programs 2023YFA1010900 and 2024YFA1014900.

	\bigskip
	\noindent
	{\bf Statement of competing interests}. 
	All authors declare that there is no conflict of interest.

	\noindent
	{\bf Statement of data availability}.
	Our manuscript has no associated data.

	\noindent
	{\bf AI Statement}. The authors thank ChatGPT 5.6 AI models for theoretical derivations and computational assistance. All mathematical derivations, conclusions, and errors remain solely the responsibility of the authors.

\end{document}